\theoremstyle{definition}
\newtheorem{defi}{Definition}[section]
\newtheorem{Eg}[defi]{\textbf{Example}}
\newtheorem*{Eg*}{Example}
\newtheorem*{defi*}{Definition}
\newtheorem*{Rq*}{\textbf{Remarque}}
\providecommand{\customgenericname}{}
\newcommand{\newcustomtheorem}[2]{%
	\newenvironment{#1}[1]
	{%
		\renewcommand\customgenericname{#2}%
		\renewcommand\theinnercustomgeneric{##1}%
		\innercustomgeneric
	}
	{\endinnercustomgeneric}
}
\theoremstyle{plain}
\newtheorem{Prop}[defi]{Proposition}
\newtheorem{Lemme}[defi]{Lemma}
\newtheorem{thm}[defi]{Theorem}
\newtheorem*{thm*}{Théorème}
\newtheorem*{Lemme*}{Lemme}
\newtheorem*{Prop*}{Proposition}
\newtheorem{remark}[defi]{Remark}
\providecommand{\customgenericname}{}
\newcommand{\newcustomtheoremplain}[2]{%
	\newenvironment{#1}[1]
	{%
		\renewcommand\customgenericname{#2}%
		\renewcommand\theinnercustomgenerictwo{##1}%
		\innercustomgenerictwo
	}
	{\endinnercustomgeneric}
}
\newcommand{\K}{\mathbb{K}}
\newcommand{\N}{\mathbb{N}}
\newcommand{\R}{\mathbb{R}}
\newcommand{\tdun}[1]{\begin{picture}(10,5)(-2,-1)
		\put(0,0){\circle*{2}}
		\put(2,-2){\tiny #1}
\end{picture}}
\newcommand{\tun}{\begin{picture}(10,5)(-2,-1)
		\put(0,0){\circle*{2}}
\end{picture}}
\newcommand{\tddeux}[2]{\begin{picture}(15,26)(-5,-1)
		\put(3,0){\circle*{2}} 
		\put(3,0){\line(0,1){7}}
		\put(3,7){\circle*{2}} 
		
		\put(5,-1){\tiny #1}
		\put(5,6){\tiny #2}
\end{picture}}
\newcommand{\tdtroisun}[3]{\begin{picture}(20,12)(-5,-1)
		\put(3,0){\circle*{2}}
		\put(-0.65,0){$\vee$}
		\put(6,7){\circle*{2}}
		\put(0,7){\circle*{2}}
		\put(5,-2){\tiny #1}
		\put(7,5){\tiny #2}
		\put(-5,8){\tiny #3}
\end{picture}}
\newcommand{\tdquatredeux}[4]{\begin{picture}(20,20)(-5,-1)
		\put(3,0){\circle*{2}}
		\put(-.65,0){$\vee$}
		\put(6,7){\circle*{2}}
		\put(0,7){\circle*{2}}
		\put(0,14){\circle*{2}}
		\put(0,7){\line(0,1){7}}
		\put(5,-2){\tiny #1}
		\put(9,5){\tiny #2}
		\put(-5,5){\tiny #3}
		\put(-5,12){\tiny #4}
\end{picture}}
\newcommand{\tdquatrequatre}[4]{\begin{picture}(20,14)(-5,-1)
		\put(3,5){\circle*{2}}
		\put(-.65,5){$\vee$}
		\put(6,12){\circle*{2}}
		\put(0,12){\circle*{2}}
		\put(3,0){\circle*{2}}
		\put(3,0){\line(0,1){5}}
		\put(6,-3){\tiny #1}
		\put(6,4){\tiny #2}
		\put(9,12){\tiny #3}
		\put(-5,12){\tiny #4}
\end{picture}}
\newcommand\restr[2]{{
  \left.\kern-\nulldelimiterspace 
  #1 
  \littletaller 
  \right|_{#2} 
  }}
  \newcommand{\littletaller}{\mathchoice{\vphantom{\big|}}{}{}{}}
\newcommand {\calF}{{\mathcal{F}}}
\newcommand {\calT}{{\mathcal {T}}}
\newcommand {\calW}{{\mathcal {W}}}
\begin{document}

\title{Coalgebras, bialgebras and Rota-Baxter algebras from shuffles of rooted forests}
\author{Pierre~J.~Clavier${}^{1}$, Douglas Modesto${}^{1}$\\
~\\
\normalsize \it $^1$  Department of Mathematics, IRIMAS, \\
\normalsize \it Université de Haute Alsace.\\
~\\
\normalsize email: pierre.clavier@uha.fr}

\date{}

\maketitle

\begin{abstract} 
 We construct and study new generalisations to rooted trees and forests of some properties of shuffles of words. First, we build a coproduct on rooted trees which, together with their shuffle, endow them with bialgebra structure. We then caracterize the coproduct dual to the shuffle product of rooted forests and build a product on rooted trees to obtain the bialgebra dual to the shuffle bialgebra. We then characterize and enumerate primitive trees for the dual coproduct. Finally, using modified shuffles of rooted forests, we prove a property in the category of Rota-Baxter algebras.
\end{abstract}

\tableofcontents

\section*{Introduction}

\addcontentsline{toc}{section}{Introduction}

\subsection*{Forests, trees and words}

\addcontentsline{toc}{subsection}{Forests, trees and words}

Rooted trees and forests are a stepstone of combinatorics, and their various properties have been studied by generations of mathematicians. Not only are they essential in pure combinatorics (see for example \cite{Foissy01}), but they are also appear in other domain of mathematics, and in particular number theory (see \cite{Ma13,Cl20}) which is the motivation for this work. Let us also mention in passing that they are also crucial in Physics. They turned out for example to be at the heart of renormalisation theory (\cite{CK1,CK2}), to play an essential role in the study of Schwinger-Dyson equations (\cite{foissy2014general} and also \cite{balduf2024tubings} for recent advances in this domain). 

What will interest us here is that rooted trees and forests are a natural generalisation of words. As it turns out, words as well have many important properties that play important roles in combinatorics and number theory. For a set $\Omega$, we write $\calW_\Omega$ the vector space freely generated by words written in the alphabet $\Omega$. The starting point of our analysis is the shuffle of words, and its contracting counterparts, which are sometimes called quasi-shuffles, contracting shuffles or stuffles. The following definition is taken from \cite{Ho00} but is equivalent to other definition of shuffles, for example in \cite{Guo}.
\begin{defi} \label{def:shuffles}
 Let $\K$ be a field, $A$ an algebra over $\K$ and $\lambda\in\K$. The \textbf{$\lambda$-shuffle product} on $\calW_A$ is defined recursively as follows:
 \begin{enumerate}
			\item $\emptyset \shuffle_\lambda \omega = \omega \shuffle_\lambda \emptyset = \omega$ for any word $w\in\calW_A$;
			\item for two non-empty words $\omega = w_0 \sqcup \tilde{\omega}$ and $\omega' = w'_0 \sqcup \tilde{\omega'}$ in $\calW_A$ we set 
			\begin{equation*}
			\omega \shuffle_\lambda \omega' = w_0 \sqcup(\tilde{\omega}\shuffle_\lambda \omega') + w_0' \sqcup (\omega \shuffle_\lambda \tilde{\omega'}) + \lambda(w_0 w'_0)\sqcup(\tilde{\omega} \shuffle_\lambda \tilde{\omega'})
			\end{equation*}
  where $w_0 w'_0\in A$ stands for the multiplication of $w_0$ and $w_0'$ in $A$ and $\sqcup$ for the concatenation of words in $\calW_A$.
 \end{enumerate}
\end{defi}
It is a standard exercise to show that the $\shuffle_\lambda$ are associative products for all $\lambda\in\K$. If $A$ is commutative, then the $\lambda$-shuffle are also commutative. Definition \ref{def:shuffles} still makes sense if $A$ is only a semigroup. For $A$ a set, we need to take $\lambda=0$. In the following, we will not differentiate these cases.

These products have many important properties which we now list. The first one is that one can actually build a coproduct which endows $\calW_A$ with a bialgebra structure. We refer the reader to one of the many excellent introductions to the theory of bialgebras and Hopf algebras (for example \cite{manchon2004hopf}) for full definition. For our purpose, let us simply say if $(B,m)$ is a $\K-$algebra with product $m:B\otimes B\longrightarrow B$ associative, we are looking for a {\bf coproduct} map $\Delta:B\longrightarrow B\otimes B$ which should be coassociative, and a {\bf counit} map $\varepsilon:B\longrightarrow\K$. These maps should have the following properties:
\begin{equation}\label{eq:bialg}
\begin{tikzcd}
\mathbb{K}\otimes B & B\otimes B \arrow[r, "I\otimes \varepsilon"] \arrow[l, "\varepsilon\otimes Id"'] & B\otimes \mathbb{K} \\
                    & B \arrow[u, "\Delta"] \arrow[ru, "\sim"] \arrow[lu, "\sim"]                      &                    
\end{tikzcd},
\begin{tikzcd}
B\otimes B \arrow[d, "\Delta \otimes \Delta"] \arrow[rr, "m"]          &  & B \arrow[d, "\Delta"] \\
B\otimes B\otimes B\otimes B \arrow[rr, "(m\otimes m)\circ \tau_{23}"] &  & B\otimes B          
\end{tikzcd}.
    \end{equation}
If the case of words, we take $B=\calW_A$ and $\Delta$ is the deconcatenation coproduct. The dual of the $0$-shuffle product is another coproduct on (the dual of) $\calW_A$. This coproduct, called deshuffle, together with the concatenation product endow (the dual of) $\calW_A$ with another bialgebra structure.

Finally, we also need to mention that words, together with the $\lambda$-shuffle products, have a universal property in the categoryof Rota-Baxter algebras \cite{baxter1960analytic,rota1969baxter}. This universal property was proven in \cite{Guo,GUO2000117} and will be presented later in this article.

\subsection*{Content and main results}

\addcontentsline{toc}{subsection}{Content and main results}

The $\lambda$-shuffle products of Definition \ref{def:shuffles} not only endow words with rich algebraic structures but also play important roles in the theory of multi zeta values (MZVs). These objects were generalised in \cite{Ma13} and \cite{Cl20} to rooted trees and forests. In \cite{Cl20}, generalisations of $\lambda$-shuffles to rooted forests were built that play the same roles to generalised MZVs as usual $\lambda$-shuffles to MZVs. Since rooted trees and forests generalise words, our goal in this paper is to generalise the aformentioned properties of $\lambda$-shuffles of words to the shuffles products of rooted trees and forests.

We start Section \ref{sec:one} by introducing necessary combinatorial definitions and in particular the definition of $\lambda$-shuffles of rooted forests (Definition \ref{def:shuffle_forests}). The associated coproduct is defined in Definition \ref{def:shuffle_coprod}. It is counital and coassociative (Propositin \ref{prop:shuffle_coalg}). Our main result of this first section is Theorem \ref{thm:shuffle_bialg} which states that the shuffles of rooted trees together with this coproduct endow the space of rooted trees with a bialgebra structure.

Section \ref{sec:two} is devoted to the study of structures dual to the shuffle of rooted forests. We give an inductive caracterisation of the coproduct dual to the shuffle of rooted forests which is stated in Theorem \ref{thm:coprod_dual_ind}, our first main result of this second section. We then introduce some further combinatorial constructions on rooted trees in Definition \ref{def:adm_family}, and in particular admissible families of vertices. The second main result of this second section is then Theorem \ref{thm:coprod_adm} which provide a purely combinatorial description of the dual coproduct in terms of admissible families of vertices. 

Moving on, we introduce two grafting products (Definition \ref{def:GL_like_products}) which are pre-Lie (Proposition \ref{prop:pre_Lie}). The third important result of this section is that one of this pre-Lie product, together with the dual coproduct, endows the dual space of rooted trees with a bialgebra structure (Theorem \ref{thm:dual_bialg}), which is the dual to the bialgebra structure of the aforementioned Theorem \ref{thm:shuffle_bialg}. We finish this second section with a description of trees that are primitive for the dual coproduct (Theorem \ref{thm:description_prim_trees}). This description implies an inductive formula for the number of these primitive trees (Proposition \ref{prop:prim_trees}). This formula allows to easily compute the number of primitive trees with a low number of vertices: we give the values for trees up to 23 vertices, obtained with a standard laptop.

In section \ref{sec:three}, the final section of this paper, we look at the relationship between shuffles of trees and Rota-Baxter algebras. We first recall some notions of the theory of Rota-Baxter algebras, and a particular a universal property of the shuffle products of words. We introduce in Definition \ref{def:new_shuffle} new shuffles of rooted trees and forests. These are shown in our last important result, namely Theorem \ref{thm:quasi_univ_prop}, to have a property of universal type in the category of Rota-Baxter algebras.

\section{Shuffle bialgebra} \label{sec:one}

\subsection{Shuffle of rooted forests} \label{subsec:shuffle_coalg}

Let us start by recalling some classical definitions of (oriented) graphs theory. These definitions can be found in many introductions to the topic, for example \cite{Wi96}.
\begin{defi}
 \begin{itemize}
  \item A {\bf graph} is a pair of finite sets $G:=(V(G),E(G))$ with $E(G)\subseteq V(G)\times V(G)$. $E(G)$ is the set of edges of the graph and $V(G)$ the set of vertices of the graph. 
  \item A {\bf path} in a graph $G$ is a finite sequence of elements of $V(G)$: $p=(v_1,\cdots,v_n)$ such that for all $i\in[[1,n-1]]$, $(v_i,v_{i+1})$ is an edge of $G$.  
  By convention, there is always a path between a vertex and itself. 
  \item For a graph $G=(V(G);E(G))$, let $\preceq$ be the binary relation on $V(G)$ defined by: $v_1\preceq v_2$ if, and only if, it exists a path from $v_1$ to $v_2$. We also denote by $\succeq$ the inverse relation. A {\bf directed acyclic graph} (DAG for short) is a graph such that $(V(G),\preceq)$ is a poset.
  
  \item A {\bf forest} is a DAG such that there is at most one path between two vertices. A {\bf rooted forest} is a forest whose connected components each have a unique minimal element. These elements are called {\bf roots}. A {\bf rooted tree} is a connected rooted forest.
  \item Let $\Omega$ be a set. An {\bf $\Omega$-decorated rooted forest} is a rooted forest $F$ together with a {\bf decoration map} 
  $d:V(F)\mapsto\Omega$. 
  \item Two rooted forests $F$ and $F'$ (resp. decorated rooted forests $(F,d_F)$ and $(F',d_{F'})$) are {\bf isomorphic} if there exists a poset isomorphism $f_V:V(F)\longrightarrow V(F')$ (resp. and $d_F = d_{F'}\circ f_V$).
 \end{itemize}
\end{defi}
As usual, we always consider isomorphism classes of rooted forests and therefore identify trees and forests with their classes and write $\calF_\Omega$ the set of isomorphism classes of $\Omega$-decorated rooted forests and $\calF$ the set of isomorphism classes of undecorated rooted forests.  Furthermore, when there is no need to specify the decoration map we simply write $F$ for a decorated forest $(F,d)$. We also allow ourself to identify a vertex and its decoration in order to not have too heavy notations.

We write $\emptyset$ the empty forest. Notice that it belongs to $\calF_\Omega$ and set $\calF^+_\Omega:=\calF_\Omega\setminus\{\emptyset\}$. Finally, for a (decorated or not) rooted forest $F$, we write $|F|$ the number of vertices of $F$ (thus $|\emptyset|=0$). This degree induces a graduation of $\calF$ and $\calF_\Omega$.

Our rooted forests are non-planar. This implies in particular that their concatenation is a commutative (and associative) product. We write $F_1\cdots F_k$ the concatenation of the forests $F_1,\cdots,F_k$. The empty forest is the unit of this product.

Further recall that for any $\omega\in\Omega$ the {\bf grafting operator} $B_+^\omega:\calF_{\Omega}\longrightarrow\calT_\Omega$ is the linear operator that, to any rooted forest $F=T_1\cdots T_k$, associates the decorated tree obtained from $F$ by adding a root decorated by $\omega$ linked to each root of $T_i$ for $i$ going from $1$ to $k$.\\

The next definition introduces the central object of our inquiry. It is taken from \cite{Cl20} and is a generalisation to rooted forests of the shuffle products of words (Definition \ref{def:shuffles}).
\begin{defi} \label{def:shuffle_forests}
 Let $(\Omega,.)$ be a commutative semigroup and $\lambda\in\K$. The {\bf $\lambda$-shuffle} of two rooted forests $F$ and $F'$ is defined recursively on $N=|F|+|F'|$ by
 \begin{itemize}
  \item  If $|F|+|F'|=0$ (and thus $F=F'=\emptyset$), we set $\emptyset\shuffle^T_\lambda \emptyset = \emptyset$.
 
  \item For $N\in\N$, assume the shuffle product of forests has been defined on every forests $f,f'$ such that $|f|+|f'|\leq N$. Then for any two forests $F,F'$ such that $|F|+|F'|=N+1$;
  \begin{enumerate}
   \item If $F'=\emptyset$ (or $F=\emptyset$) set $F\shuffle^T_\lambda \emptyset=\emptyset\shuffle^T_\lambda F=F$.
   \item  If $F$ or $F'$ is not a tree, then we can write $F$ and $f$ uniquely as a concatenation of trees: $F=T_1\cdots T_k$ and $F'=t_1\cdots t_n$ with the $T_i$s and $t_j$s 
  nonempty, $k+n\geq3$ and set 
  \begin{equation} \label{eq:def_shuffle_forests}
   F\shuffle^T_\lambda F' = \frac{1}{kn}\sum_{i=1}^k\sum_{j=1}^n\left((T_i\shuffle^T_\lambda t_j)T_1\cdots\widehat{T_i}\cdots T_n t_1\cdots\widehat{t_j}\cdots t_k\right)
  \end{equation}
  where $T_1\cdots\widehat{T_i}\cdots T_n$ stands for the concatenation of the trees $T_1,\cdots,T_n$ without the tree $T_i$.
   \item If $F=T = B_+^a(f)$ and $F'=T'=B_+^{a'}(f')$ are two nonempty trees, we set 
  \begin{equation} \label{eq:def_shuffle_trees}
   T\shuffle^T_\lambda T' = B_+^a(f\shuffle^T_\lambda  T') + B_+^{a'}(T\shuffle^T_\lambda  f') +\lambda B_+^{a.a'}(f\shuffle^T_\lambda f').
  \end{equation}
  \end{enumerate}
 \end{itemize}
 This product is then extended by bilinearity to a product on $\calF_\Omega$.
\end{defi}
\begin{remark} \label{rk:shuffle_trees}
 If $\lambda=0$, $\Omega$ can be a set without a semigroup structure. In this case, we write $\shuffle^T$ for $\shuffle^T_0$ and call this product the {\bf shuffle product} of rooted forests. This is the case we will focus on in the next Section \ref{sec:two}.
\end{remark}
Before moving on to an coalgebra associated to the shuffle product it might be relevant to write some examples of shuffle products of rooted forests.
\begin{Eg}
 $\big)\big)$
\end{Eg}

Finally, the following result of \cite{Cl20} will play an important role in the rest of this paper.
\begin{Prop}
        Let $\lambda \in \R$ and $(\Omega,\cdot )$ be a commutative semi-group. Then $(\mathcal{F},\shuffle^T_\lambda, \emptyset)$ is an nonassociative, commutative, unital algebra. 
    \end{Prop}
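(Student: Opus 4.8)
The plan is to work on the distinguished basis of $\calF_\Omega$ given by the isomorphism classes of forests, since $\shuffle^T_\lambda$ is defined there and extended by bilinearity; the distributivity and $\K$-linearity axioms making $(\calF,\shuffle^T_\lambda,\emptyset)$ a (not-necessarily-associative) algebra are then automatic. What genuinely requires argument is (i) that the recursive Definition \ref{def:shuffle_forests} is well posed, (ii) commutativity, and (iii) that $\emptyset$ is a two-sided unit. For (iii) there is nothing to do: the first bullet and case 1 of the second bullet give $\emptyset\shuffle^T_\lambda F = F\shuffle^T_\lambda\emptyset = F$ directly.

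For well-definedness I would first check that the recursion is well-founded, i.e.\ that every shuffle appearing on the right-hand side strictly decreases $N=|F|+|F'|$. In case 3 this is clear since, writing $T=B_+^a(f)$ and $T'=B_+^{a'}(f')$, one has $|f|+|T'|=|T|+|f'|=N$ and $|f|+|f'|=N-1$. In case 2 one has $k+n\geq 3$, so removing $T_i$ and $t_j$ leaves at least one nonempty tree, whence $|T_i|+|t_j|\leq N$ and $T_i\shuffle^T_\lambda t_j$ is already defined. The second point is that the case-2 formula must not depend on how $F$ and $F'$ are written as concatenations of trees: since forests are unordered, the only freedom is the ordering of the factors $T_1\cdots T_k$ and $t_1\cdots t_n$ (with multiplicity if some trees coincide), and \eqref{eq:def_shuffle_forests} is a sum over all pairs $(i,j)$ normalised by $\tfrac1{kn}$, hence invariant under permuting the $T_i$ or the $t_j$; commutativity of concatenation then guarantees the result is a well-defined element of $\calF_\Omega$. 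Here $k$ and $n$ are intrinsic, being the numbers of connected components of $F$ and $F'$.

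Commutativity is then proved by induction on $N$. The base case $N=0$ and case 1 are trivial, and the unit property already handles $F=\emptyset$ or $F'=\emptyset$. For case 2, swapping $F\leftrightarrow F'$ exchanges the roles of the $T_i$ and the $t_j$, leaves the normalisation $\tfrac1{kn}$ unchanged, and leaves the complementary concatenation unchanged since concatenation is commutative; applying the induction hypothesis to each $T_i\shuffle^T_\lambda t_j$ (of total size $\leq N$) shows the two expressions coincide. For case 3, the induction hypothesis gives $f\shuffle^T_\lambda T'=T'\shuffle^T_\lambda f$, $T\shuffle^T_\lambda f'=f'\shuffle^T_\lambda T$ and $f\shuffle^T_\lambda f'=f'\shuffle^T_\lambda f$; since $(\Omega,\cdot)$ is commutative we also have $a\cdot a'=a'\cdot a$, so the three terms of \eqref{eq:def_shuffle_trees} match those obtained after swapping $T$ and $T'$.

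I expect the main obstacle to be point (i), the well-definedness: one must be careful that the decomposition into trees is unique only up to reordering and possible repetition, so that the argument really shows \eqref{eq:def_shuffle_forests} descends to isomorphism classes, and one must confirm the well-foundedness of the recursion in all three cases. By contrast, commutativity and unitality are routine inductions. Finally, the qualifier \emph{nonassociative} is a statement about the ambient category rather than something to be verified: associativity genuinely fails (as one can already check on small forests, the normalising factors $\tfrac1{kn}$ obstructing it), so no associativity claim is made or needed.
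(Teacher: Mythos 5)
There is no internal proof to compare you against here: the paper states this proposition as a result imported from \cite{Cl20} (``the following result of \cite{Cl20}'') and never proves it, so your argument has to stand on its own. In its essentials it does. The three things you isolate are exactly what needs checking: well-foundedness of the recursion in Definition \ref{def:shuffle_forests}; independence of the case-2 formula from the way $F$ and $F'$ are written as concatenations of trees (the only ambiguity is the ordering of the connected components, and \eqref{eq:def_shuffle_forests}, being a sum over all pairs $(i,j)$ normalised by $\tfrac1{kn}$, is invariant under it); and the induction on the total number of vertices for commutativity, which uses commutativity of concatenation for case 2 and commutativity of $(\Omega,\cdot)$ for the $\lambda$-term of \eqref{eq:def_shuffle_trees} in case 3. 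Each of these steps is sound. One bookkeeping slip: having set $N=|F|+|F'|$ for the pair being defined, your case-3 counts should read $|f|+|T'|=|T|+|f'|=N-1$ and $|f|+|f'|=N-2$; your stated values are the ones for the paper's convention, in which the induction hypothesis covers total size $\leq N$ and the pair being defined has $N+1$ vertices. Either way the recursion terminates, so this is cosmetic.

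The one point to tighten is nonassociativity. You treat the word as a category label and merely assert that associativity genuinely fails. The paper, however, relies on the actual failure (the remark following Theorem \ref{thm:shuffle_bialg} rules out an antipode precisely ``due to the fact that $\shuffle^T_\lambda$ is not associative''), so a complete proof of the proposition as the authors use it should exhibit a counterexample rather than gesture at one. Your intuition about the $\tfrac1{kn}$ factors can be made concrete with shapes instead of coefficients: take $\lambda=0$ and distinct decorations $a,b,c,d$. Every forest occurring in $\big(\tdun{a}\,\shuffle^T\,\tdun{b}\big)\shuffle^T\big(\tdun{c}\ \tdun{d}\big)$ is the concatenation of a three-vertex tree with an isolated vertex, because $\tdun{a}\,\shuffle^T\,\tdun{b}=\tddeux{a}{b}+\tddeux{b}{a}$ is a sum of trees and case 2 of Definition \ref{def:shuffle_forests} then leaves one of $\tdun{c}$, $\tdun{d}$ untouched. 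On the other hand,
\begin{equation*}
 \tdun{a}\,\shuffle^T\Big(\tdun{b}\,\shuffle^T\big(\tdun{c}\ \tdun{d}\big)\Big)
\end{equation*}
contains the forest $\tddeux{a}{d}\ \tddeux{b}{c}$ (two two-vertex trees) with coefficient $\tfrac14$, coming from $\tfrac12\,\tddeux{b}{c}\ \tdun{d}$ inside $\tdun{b}\,\shuffle^T(\tdun{c}\ \tdun{d})$ and then from the term $\tfrac12\big(\tdun{a}\,\shuffle^T\,\tdun{d}\big)\tddeux{b}{c}$. Hence the two sides differ. With this one observation added, your proof is complete.
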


\subsection{Shuffle coproduct}


Since the canonical injection $j_\Omega:\calW_\Omega\hookrightarrow\calF_\Omega$ send words to linear trees which are connected forests, we define a generalisation of the deconcatenation coproduct on rooted trees.
\begin{defi} \label{def:shuffle_coprod}
Define $\Delta:\calT_\Omega\longrightarrow\calT_\Omega\otimes\calT_\Omega$ inductively on the number of vertices. First set $\Delta(\emptyset)=\emptyset\otimes\emptyset$. Assume that $\Delta$ has been defined on all trees with $N$ vertices and let $T$ be a tree with $N+1$ vertices. Then it exists $a\in\Omega$ such that $T=B_+^a(F)$. We have two cases to consider:
\begin{itemize}
 \item If $F=t\in\calT_\Omega$ set
 \begin{equation} \label{eq:def_coprod_shuffle}
  \Delta(B_+^a(t)) = (B_+^a \otimes Id)\Delta(t) + \emptyset\otimes B_+^a(t).
 \end{equation}
 \item If $F=f \in \mathcal{F}_\Omega \setminus \mathcal{T}_\Omega$ set 
 \begin{equation*}
  \Delta(B_+^a(f)) = \emptyset \otimes B_+^a(f).
 \end{equation*}
\end{itemize}
This map is then extended by linearity to a map $\Delta:\calT_\Omega\longrightarrow\calT_\Omega\otimes\calT_\Omega$.
\end{defi}
This coproduct admits a simple graphical representation. Let the {\bf trunk} of a non-empty rooted tree be the path from the root to the first vertex with at least two descendant. Then $\Delta(T)$ is obtained by cutting the tree $T$ below each vertex in the trunk. The lower parts are the first elements and the top parts are the second elements of $\Delta(T)=\sum_{(T)}T'\otimes T''$, where we use Sweedler's notation for the coproduct. In the case of a linear trees, the initial hypothesis $\Delta(\emptyset)=\emptyset\otimes\emptyset$ brings one more cut above the unique leaf of the tree. In particular $\Delta$ coincides with the deconcatenation coproduct on linear trees.

We first have that $\Delta$ endow $\calT_\Omega$ with a coalgebra structure.
\begin{Prop} \label{prop:shuffle_coalg}
     $(\mathcal{T}_\Omega,\Delta)$ is a noncocommutative, coassociative, counital coalgebra.
 \end{Prop}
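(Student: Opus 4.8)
The plan is to prove the three assertions---counitality, coassociativity and noncocommutativity---in turn, the first two by induction on the number of vertices $N=|T|$ and the last by an explicit example. Throughout I use Sweedler's notation $\Delta(t)=\sum_{(t)}t'\otimes t''$ and the elementary fact, itself established by the same induction, that both legs produced by $\Delta$ are again trees (or $\emptyset$); this is precisely what makes it legitimate to reapply Definition \ref{def:shuffle_coprod} to each leg. I also record that on a one-vertex tree the recursion gives $\Delta(\bullet_a)=\bullet_a\otimes\emptyset+\emptyset\otimes\bullet_a$, so that the first formula of Definition \ref{def:shuffle_coprod}, $\Delta(B_+^a(s))=(B_+^a\otimes\mathrm{Id})\Delta(s)+\emptyset\otimes B_+^a(s)$, holds uniformly for every tree-or-empty child $s$ once we set $\Delta(\emptyset)=\emptyset\otimes\emptyset$.

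Coassociativity $(\Delta\otimes\mathrm{Id})\circ\Delta=(\mathrm{Id}\otimes\Delta)\circ\Delta$ is the substantial point. For $T=\emptyset$ both sides equal $\emptyset\otimes\emptyset\otimes\emptyset$. For $T=B_+^a(F)$ I split along the definition. If $F$ is a genuine forest then $\Delta(T)=\emptyset\otimes T$, and applying either $\Delta\otimes\mathrm{Id}$ or $\mathrm{Id}\otimes\Delta$ and using $\Delta(\emptyset)=\emptyset\otimes\emptyset$ collapses both sides to $\emptyset\otimes\emptyset\otimes T$. If $F=t$ is a tree then $\Delta(T)=\sum_{(t)}B_+^a(t')\otimes t''+\emptyset\otimes T$; I expand the two triple coproducts, reapplying the uniform formula above to each leg $B_+^a(t')$. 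After cancellation every term matches except the pair $(B_+^a\otimes\mathrm{Id}\otimes\mathrm{Id})(\Delta\otimes\mathrm{Id})\Delta(t)$ and $(B_+^a\otimes\mathrm{Id}\otimes\mathrm{Id})(\mathrm{Id}\otimes\Delta)\Delta(t)$, which coincide by the induction hypothesis on $t$ together with linearity of $B_+^a$. This closes the induction. The main obstacle is exactly this bookkeeping: one must be sure that the recursion may be reapplied to each leg---hence the preliminary remark that the legs are trees and that the empty leg is governed by $\Delta(\emptyset)=\emptyset\otimes\emptyset$---after which the two sides telescope onto the induction hypothesis.

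Counitality, with $\varepsilon(\emptyset)=1$ and $\varepsilon=0$ on nonempty trees, is the same induction but lighter. In the tree-child case the leg $\emptyset\otimes T$ produces $T$ under $\varepsilon\otimes\mathrm{Id}$ while every other left leg $B_+^a(t')$ is nonempty, hence annihilated, giving $(\varepsilon\otimes\mathrm{Id})\Delta(T)=T$; dually $(\mathrm{Id}\otimes\varepsilon)\Delta(T)=B_+^a((\mathrm{Id}\otimes\varepsilon)\Delta(t))=B_+^a(t)=T$ by the induction hypothesis and linearity of $B_+^a$, and the forest-child case is read off directly from $\Delta(T)=\emptyset\otimes T$. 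Finally, noncocommutativity only needs one asymmetric instance: for the ladder $T=B_+^a(\bullet_b)$ with $a\neq b$ the recursion yields $\Delta(T)=T\otimes\emptyset+\bullet_a\otimes\bullet_b+\emptyset\otimes T$, and the middle term is not fixed by the flip $\tau(x\otimes y)=y\otimes x$, so $\Delta\neq\tau\circ\Delta$.
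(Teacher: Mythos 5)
Your coassociativity argument is correct and is essentially the paper's own: the same induction on the number of vertices, the same case split according to whether the child of the root is a tree or a non-connected forest, and the same reduction of both triple coproducts to $(B_+^a\otimes\mathrm{Id}\otimes\mathrm{Id})$ applied to the two ways of iterating $\Delta$ on $t$. Your preliminary observation that the legs of $\Delta$ are again trees or $\emptyset$ (so that the recursion may legitimately be reapplied to them) is a point the paper leaves implicit, and is welcome. Two caveats remain, one minor and one serious. The minor one: your noncocommutativity witness $B_+^a(\bullet_b)$ with $a\neq b$ needs $\Omega$ to contain two distinct elements, whereas the paper's observation --- that $\Delta(T)=\emptyset\otimes T$ is already asymmetric for any non-empty non-linear tree $T$, e.g.\ a cherry $B_+^a(\bullet_b\bullet_c)$ --- works for every nonempty $\Omega$, including the singleton (undecorated) case.

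The serious one is the counit, specifically the right counit axiom $(\mathrm{Id}\otimes\varepsilon)\circ\Delta=\mathrm{Id}$. In the forest-child case you assert it can be read off from $\Delta(T)=\emptyset\otimes T$; but what one actually reads off is $(\mathrm{Id}\otimes\varepsilon)(\emptyset\otimes T)=\varepsilon(T)\,\emptyset=0\neq T$, since $T$ is nonempty. This is not a fixable slip in your write-up: for $T=B_+^a(f)$ with $f$ non-connected, the tensor $T\otimes\emptyset$ simply never occurs in $\Delta(T)$, so $(\mathrm{Id}\otimes\varepsilon')\Delta(T)$ is a scalar multiple of $\emptyset$ for \emph{every} linear functional $\varepsilon'$, and no right counit can exist. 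The failure then propagates through your tree-child step, where $(\mathrm{Id}\otimes\varepsilon)\Delta(B_+^a(t))=B_+^a\bigl((\mathrm{Id}\otimes\varepsilon)\Delta(t)\bigr)$ invokes the induction hypothesis $(\mathrm{Id}\otimes\varepsilon)\Delta(t)=t$, which is false whenever $t$ is non-linear; so the right counit axiom fails on every non-linear tree. In fairness, this defect is inherited from the paper itself: its proof dismisses counitality as trivially satisfied, which is exactly as unjustified, and with Definition \ref{def:shuffle_coprod} as written the map $\varepsilon$ is a left counit only. A literally true statement would require either weakening ``counital'' to ``left counital'', or adding the term $B_+^a(f)\otimes\emptyset$ to the forest-child case of the coproduct (this preserves coassociativity, but its compatibility with the rest of the paper, e.g.\ Theorem \ref{thm:shuffle_bialg}, would then need to be rechecked).
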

 \begin{proof}

  The counity map $\varepsilon:\calT_\Omega\longrightarrow\K$ is given by the linear map which acts on rooted trees by $\varepsilon(\emptyset)=1$ and $\varepsilon(T)=0$ for any non-empty rooted tree $T$. This trivially fulfills the axioms of the counit (first diagramm of \eqref{eq:bialg}).
  
  It is obvious from the definition that if $T$ is not empty and not a linear tree, then $\Delta(T)$ is not symmetric, thus $\Delta$ is non cocommutative. We are only left to prove that $\Delta$ is coassociative. \\
 
  We proceed by induction on $|T|$ for $T\in \mathcal{T}_\Omega$. If $|T| = 0,$ we have $T = \emptyset$ and 
     $$(\Delta \otimes Id)\Delta(\emptyset) = \emptyset\otimes \emptyset \otimes \emptyset = (Id \otimes \Delta)\Delta(\emptyset).$$
  Suppose now that $\Delta$ is coassociative for all $t\in \mathcal{T}_\Omega$ with $|t|\leq N$ and let $T \in \mathcal{T}_\Omega$ such that $|T| = N+1.$ We have two cases, first if $T \in \mathcal{T}_\Omega$ be such that $T = B_+^a(f)$ with $f\in \mathcal{F}_\Omega\setminus \mathcal{T}_\Omega,$ then
     $$\Delta(T) = \emptyset \otimes T$$
     and
     $$(\Delta \otimes Id)\Delta(T) = \emptyset\otimes \emptyset \otimes T,$$
     $$(Id \otimes \Delta)\Delta(T) = \emptyset \otimes \emptyset\otimes T.$$
     Second, if $T = B_+^a(t)$ with $t\in\calT_\Omega$, let $\Delta(t) = \sum_{(t)}t_1 \otimes t_2.$ Then we have
     $$\Delta(T) = \sum_{(t)}B_+^a(t_1)\otimes t_2 + \emptyset\otimes T.$$
     It follows that
     $$(\Delta \otimes Id)\Delta(T) = \sum_{(t)} \left((B_+^a\otimes Id)\Delta(t_1) + \emptyset \otimes B_+^a(t_1)\right)\otimes t_2 + \emptyset\otimes \emptyset \otimes T.$$
  Let us focus on the first term of this expression. We have
  \begin{equation*}
   \sum_{(t)} \left((B_+^a\otimes Id)\Delta(t_1)\right)\otimes t_2=(B_+^a\otimes Id\otimes Id)\sum_{(t)}\Delta(t_1)\otimes t_2 = (B_+^a\otimes Id\otimes Id)(\Delta\otimes Id)\Delta(t).
  \end{equation*}
  Since $|t|=N$ we can use the induction hypothesis: we have $(\Delta\otimes Id)\Delta(t)=(id \otimes\Delta)\Delta(t)$. Thus we obtain 
  \begin{equation*}
   \sum_{(t)} \left((B_+^a\otimes Id)\Delta(t_1)\right)\otimes t_2=(B_+^a\otimes Id\otimes Id)\sum_{(t)}t_1\otimes \Delta(t_2) = \sum_{(t)}B_+^a(t_1)\otimes\Delta(t_2).
  \end{equation*}
  Thus we obtain 
  \begin{equation*}
   (\Delta \otimes Id)\Delta(T) = \sum_{(t)}\left(B_+^a(t_1)\otimes\Delta(t_2)+\emptyset \otimes B_+^a(t_1)\otimes t_2\right) + \emptyset\otimes \emptyset \otimes T.
  \end{equation*}
  And on the other hand 
  \begin{align*}
   (Id\otimes \Delta)\Delta(T) & =\sum_{(t)}(Id\otimes \Delta)\left(B_+^a(t_1)\otimes t_2\right) +\emptyset\otimes \Delta(T) \\
   & = \sum_{(t)}B_+^a(t_1)\otimes\Delta(t_2) +\emptyset\otimes\left(\sum_{(t)}B_+^a(t_1)\otimes t_2 + \emptyset\otimes T\right) \\
   & = (\Delta \otimes Id)\Delta(T)
  \end{align*}
  as expected. Therefore $\Delta$ is coassociative.
 \end{proof}
 
 \subsection{Shuffle bialgebra}
 
 Let us now state and prove the main result of this section. Notice that, as pointed out before, if $\lambda=0$, we can have $\Omega$ to be a set without a semigroup structure in this Theorem (see Remark \ref{rk:shuffle_trees}).
 \begin{thm} \label{thm:shuffle_bialg}
  Let $(\Omega,.)$ be a commutative semigroup. For any $\lambda\in\K$, $(\mathcal{T}_\Omega,\shuffle_\lambda^T,\Delta)$ is a commutative, non-associative, non-cocommutative and coassociative  bialgebra. 
 \end{thm}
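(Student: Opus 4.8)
The plan is to leverage the two structural results already in hand. The cited proposition gives that $(\calF,\shuffle^T_\lambda,\emptyset)$ is a commutative, non-associative, unital algebra; its structure restricts to $\calT_\Omega$, which is stable under $\shuffle^T_\lambda$ because the shuffle of two trees, built from \eqref{eq:def_shuffle_trees} by repeated grafting, is a linear combination of trees. Proposition \ref{prop:shuffle_coalg} gives that $(\calT_\Omega,\Delta,\varepsilon)$ is a non-cocommutative, coassociative, counital coalgebra. All four adjectives appearing in the statement are thus already established (or exhibited by the small examples above), so the only substantive content left is the compatibility encoded in the second diagram of \eqref{eq:bialg}: that $\Delta$ is a morphism of algebras from $(\calT_\Omega,\shuffle^T_\lambda)$ to $(\calT_\Omega\otimes\calT_\Omega,\bullet)$, where $(x\otimes y)\bullet(u\otimes v)=(x\shuffle^T_\lambda u)\otimes(y\shuffle^T_\lambda v)$ is the product $(m\otimes m)\circ\tau_{23}$. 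The multiplicativity of the counit, $\varepsilon(T\shuffle^T_\lambda T')=\varepsilon(T)\varepsilon(T')$, is immediate: if $T,T'$ are both nonempty then $T\shuffle^T_\lambda T'$ is a combination of nonempty trees (every summand has at least $|T|+|T'|-1\ge1$ vertices), and $\varepsilon$ vanishes on nonempty trees; the case where a factor is $\emptyset$ is trivial.

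I would then prove $\Delta(T\shuffle^T_\lambda T')=\Delta(T)\bullet\Delta(T')$ for all $T,T'\in\calT_\Omega$ by induction on $N=|T|+|T'|$. The cases $N\le1$ and, more generally, the case $T'=\emptyset$ (or $T=\emptyset$) are handled directly from $\Delta(\emptyset)=\emptyset\otimes\emptyset$ and $x\shuffle^T_\lambda\emptyset=x$. For the inductive step with $T,T'$ both nonempty I write $T=B_+^a(f)$ and $T'=B_+^{a'}(f')$ and split into cases according to whether $f,f'$ are trees or proper forests, this being exactly the dichotomy that selects which branch of Definition \ref{def:shuffle_coprod} applies. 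Two preliminary observations make the bookkeeping work: (i) the shuffle of two trees is a combination of trees, so after grafting the first branch \eqref{eq:def_coprod_shuffle} of $\Delta$ applies; and (ii) the shuffle of a tree with a proper forest, or of two proper forests, is a combination of proper forests (the number of connected components is $\ge2$), so after grafting the second branch $\Delta(B_+^\bullet(\text{proper forest}))=\emptyset\otimes B_+^\bullet(\cdots)$ applies. With these, the subcases where at least one of $f,f'$ is a proper forest collapse quickly: every grafted summand lands in the second branch of $\Delta$, and one checks directly that both sides equal $\emptyset\otimes(T\shuffle^T_\lambda T')$ plus the $B_+^a(t_1)\otimes(t_2\shuffle^T_\lambda T')$-type terms coming from the one factor (if any) whose $B_+$-argument is a tree.

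The heart of the argument, and the step I expect to be the main obstacle, is the subcase where both $f=t$ and $f'=t'$ are trees. Here I would expand $T\shuffle^T_\lambda T'$ by \eqref{eq:def_shuffle_trees} into $B_+^a(t\shuffle^T_\lambda T')+B_+^{a'}(T\shuffle^T_\lambda t')+\lambda B_+^{a.a'}(t\shuffle^T_\lambda t')$, apply $\Delta$ to each summand via \eqref{eq:def_coprod_shuffle}, and invoke the induction hypothesis on the three strictly smaller shuffles $t\shuffle^T_\lambda T'$, $T\shuffle^T_\lambda t'$ and $t\shuffle^T_\lambda t'$. Writing $\Delta(t)=\sum_{(t)}t_1\otimes t_2$ and $\Delta(t')=\sum_{(t')}t'_1\otimes t'_2$, the resulting terms reorganize so that the $\emptyset\otimes(\cdots)$ parts reassemble to $\emptyset\otimes(T\shuffle^T_\lambda T')$, the mixed parts give $\sum_{(t)}B_+^a(t_1)\otimes(t_2\shuffle^T_\lambda T')+\sum_{(t')}B_+^{a'}(t'_1)\otimes(T\shuffle^T_\lambda t'_2)$, and, crucially, the remaining first tensor factors combine as
$$B_+^a\big(t_1\shuffle^T_\lambda B_+^{a'}(t'_1)\big)+B_+^{a'}\big(B_+^a(t_1)\shuffle^T_\lambda t'_1\big)+\lambda B_+^{a.a'}\big(t_1\shuffle^T_\lambda t'_1\big)=B_+^a(t_1)\shuffle^T_\lambda B_+^{a'}(t'_1),$$
which is exactly \eqref{eq:def_shuffle_trees} read backwards, and remains valid even when $t_1$ or $t'_1$ equals $\emptyset$ since the recursion degenerates consistently. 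Matching this against the term-by-term expansion of $\Delta(T)\bullet\Delta(T')$ closes the induction. The only real subtlety is this final recombination together with the careful tracking, via observations (i)--(ii), of which branch of $\Delta$ each grafted summand triggers.
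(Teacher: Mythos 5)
Your proposal follows essentially the same route as the paper's proof: the same induction on $N=|T|+|T'|$, the same case split according to whether the $B_+$-arguments are trees or proper (non-connected) forests, and the same key recombination of the three grafted first-tensor factors into $B_+^a(t_1)\shuffle^T_\lambda B_+^{a'}(t'_1)$ by reading Equation \eqref{eq:def_shuffle_trees} backwards. One minor imprecision: in the mixed case your claim that \emph{every} grafted summand lands in the second branch of $\Delta$ is not right---the summand $B_+^{a'}(T\shuffle^T_\lambda t')$ whose argument is a combination of trees triggers the first branch \eqref{eq:def_coprod_shuffle}---but the conclusion you then state for those cases (the $\emptyset\otimes(T\shuffle^T_\lambda T')$ term plus the tree-type terms from the single tree factor) is exactly what the paper obtains.
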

 \begin{proof}
  The various properties of the products and coproducts have been proven in \cite{Cl20} and in Proposition \ref{prop:shuffle_coalg} above. Then compatibility of the counit with the product and of the unit with the coproduct are trivial to check. Thus we only need to prove that the coproduct is an morphism for the shuffle product (second diagram of \eqref{eq:bialg}). We want to prove that, for any $\lambda$ in $\K$ and any rooted trees $T$ and $T'$ one has
  \begin{equation}\label{morphshuf}
         \Delta(T\shuffle^T_\lambda T') = (\shuffle^T_\lambda \otimes \shuffle^T_\lambda)\circ \tau_{23}\circ (\Delta \otimes \Delta)(T\otimes T').
  \end{equation}
  We prove this by induction on $N=|T|+|T'|$. If $N=0$ we have $T=T'=\emptyset$ and a trivial computation gives
  \begin{equation*}
   \Delta (\emptyset \shuffle^T_\lambda \emptyset) =\emptyset\otimes\emptyset= (\shuffle^T_\lambda \otimes \shuffle^T_\lambda)\circ \tau_{23}\circ (\Delta \otimes \Delta)(\emptyset \otimes \emptyset).
  \end{equation*}
  Assume now that Equation \eqref{morphshuf} holds for any $t,t' \in \mathcal{T}_\Omega$ with $|t|+|t'| \leq N,$ and let $T,T' \in \mathcal{T}$ such that $|T|+|T'| = N+1.$ We have various cases to consider:
  
   If $T$ (or $T'$) is empty, then $\Delta(T \shuffle^T_\lambda \emptyset) = \Delta(T)$ and
     $$ (\shuffle^T_\lambda \otimes \shuffle^T_\lambda)\circ \tau_{23}\circ (\Delta \otimes \Delta) (T \otimes \emptyset) = (\shuffle^T_\lambda \otimes \shuffle^T_\lambda)\circ \tau_{23}\left(\sum_{(T)}T_1\otimes T_2 \otimes \emptyset \otimes 
     \emptyset\right)=\sum_{(T)}T_1\otimes T_2 =\Delta(T) $$
     and Equation \eqref{morphshuf} holds at rank $N+1$ in this case as expected.
   
   If $T$ and $T'$ are both nonempty we can write $T=B_+^a(F)$ and $T'=B_+^a(F')$ and we have four subcases to consider depending on whether or not $F$ and $F'$ are rooted trees.
   \begin{enumerate}
    \item If $T = B_+^a(t)$ and $T' = B_+^{a'}(t')$ with $t$ and $t'$ in $\calT_\Omega$, let $\Delta(t) = \sum_{(t)}t_1\otimes t_2$ and $\Delta(t') = \sum_{(t')}t'_1\otimes t'_2.$
    We have then
    $$T\shuffle^T_\lambda T' = B_+^a(t\shuffle^T_\lambda T') + B_+^{a'}(T\shuffle^T_\lambda t') + \lambda B_+^{a.a'}(t\shuffle^T_\lambda t').$$
    By definition of the $\lambda$-huffle products of rooted forests, shuffles of rooted trees are linear combinations of rooted trees. Therefore
    \begin{align*}
     & \Delta(t\shuffle^T_\lambda T') = (B_+^a\otimes Id)\Delta(t\shuffle^T_\lambda T ') + \emptyset\otimes B_+^a(t\shuffle^T_\lambda T') \\
     = ~& (B_+^a\otimes Id)(\shuffle^T_\lambda \otimes \shuffle^T_\lambda)\circ \tau_{23}\left(\Delta(t)\otimes\Delta(T')\right) + \emptyset\otimes B_+^a(t\shuffle^T_\lambda T') \quad\text{by the induction hypothesis}\\
     = ~& (B_+^a\otimes Id)(\shuffle^T_\lambda \otimes \shuffle^T_\lambda)\circ \tau_{23}\left(\Delta(t)\otimes\left((B_+^{a'}\otimes Id)\Delta(t') +\emptyset\otimes T'\right)\right) + \emptyset\otimes B_+^a(t\shuffle^T_\lambda T') \\
     = ~& (B_+^a\otimes Id)(\shuffle^T_\lambda \otimes \shuffle^T_\lambda)\circ \tau_{23}\left(\sum_{(t),(t')}t_1\otimes t_2\otimes B_+^{a'}(t'_1)\otimes t'_2 + \sum_{(t)}t_1\otimes t_2\otimes\emptyset\otimes T'\right) + \emptyset\otimes B_+^a(t\shuffle^T_\lambda T') \\
     = ~& \sum_{(t),(t')}B_+^a(t_1\shuffle^T_\lambda B_+^{a'}(t'_1))\otimes (t_2\shuffle^T_\lambda t'_2) + \sum_{(t)}B_+^a(t_1)\otimes (t_2\shuffle^T_\lambda T') + \emptyset\otimes B_+^a(t\shuffle^T_\lambda T').
    \end{align*}
    Performing the same computation for the other two terms of $\Delta(T\shuffle_\lambda^T T')$ we find
    \begin{align*}
       & \Delta(T\shuffle^T_\lambda T') = \sum_{(t),(t')}B_+^a(t_1\shuffle^T_\lambda B_+^{a'}(t'_1))\otimes (t_2\shuffle^T_\lambda t'_2) + \sum_{(t)}B_+^a(t_1)\otimes (t_2\shuffle^T_\lambda T') + \emptyset\otimes B_+^a(t\shuffle^T_\lambda T')\\
    + & \sum_{(t),(t')}B_+^{a'}(B_+^a(t_1)\shuffle^T_\lambda t'_1)\otimes (t_2\shuffle^T_\lambda t'_2) + \sum_{(t')}B_+^{a'}(t'_1)\otimes (T\shuffle^T_\lambda t'_2) + \emptyset\otimes B_+^{a'}(T\shuffle^T_\lambda t')\\
             + & \lambda\sum_{(t),(t')}B_+^{a.a'}(t_1\shuffle^T_\lambda t'_1)\otimes (t_2\shuffle^T_\lambda t'_2) + \lambda\emptyset \otimes B_+^{a.a'}(t\shuffle^T_\lambda t').
    \end{align*}
    Notice than in each of these computations, we could use the induction hypothesis since each of the trees appearing in $t\shuffle^T_\lambda T'$, $T\shuffle^T_\lambda t'$ and  $t\shuffle^T_\lambda t'$ have at most $N$ vertices. Putting the three summations over $(t)$ and $(t')$ together we obtain
    \begin{align*}
     & \Delta(T\shuffle^T_\lambda T') = \sum_{(t),(t')}\left(B_+^a(t_1\shuffle^T_\lambda B_+^{a'}(t'_1)) + B_+^{a'}(B_+^a(t_1)\shuffle^T_\lambda t'_1) + \lambda B_+^{a.a'}(t_1\shuffle^T_\lambda t'_1)\right) \otimes (t_2\shuffle^T_\lambda t'_2) \\
    + & \sum_{(t)}B_+^a(t_1)\otimes (t_2\shuffle^T_\lambda T') + \sum_{(t')}B_+^{a'}(t'_1)\otimes (T\shuffle^T_\lambda t'_2)+ \emptyset\otimes\left(B_+^a(t\shuffle^T_\lambda T')+B_+^{a'}(T\shuffle^T_\lambda t') + \lambda B_+^{a.a'}(t\shuffle^T_\lambda t')\right) \\
     = & \sum_{(t),(t')}\left(B_+^a(t_1)\shuffle^T_\lambda B_+^{a'}(t'_1)\right) \otimes (t_2\shuffle^T_\lambda t'_2) + \sum_{(t)}B_+^a(t_1)\otimes (t_2\shuffle^T_\lambda T') + \sum_{(t')}B_+^{a'}(t'_1)\otimes (T\shuffle^T_\lambda t'_2) \\
     & \hspace{10cm} \llcorner+ \emptyset \otimes (T\shuffle^T_\lambda T').
    \end{align*}
   On the other hand, we also have
   \begin{align*}
    & \Delta(T)\otimes\Delta(T')  = \left((B_+^a\otimes Id)\Delta(t)+\emptyset\otimes T\right)\otimes\left((B_+^{a'}\otimes Id)\Delta(t')+\emptyset\otimes T'\right) \\
     = & \sum_{(t),(t')}B_+^a(t_1)\otimes t_2\otimes B_+^{a'}(t'_1)\otimes t_2 + \sum_{(t)}B_+^a(t_1)\otimes t_2\otimes\emptyset\otimes T' + \sum_{(t')}\emptyset \otimes T\otimes B_+^{a'}(t'_1)\otimes t'_2 + \emptyset\otimes T \otimes\emptyset\otimes T'.
   \end{align*}
   We then have
   \begin{align*}
               (\shuffle^T_\lambda \otimes \shuffle^T_\lambda & )\circ \tau_{23} (\Delta \otimes \Delta)(T\otimes T') = \sum_{(t),(t')}\left(B_+^a(t_1)\shuffle^T_\lambda B_+^{a'}(t'_1)\right)\otimes (t_2\shuffle^T_\lambda t'_2)  \\
              & \llcorner + \sum_{(t)}B_+^a(t_1)\otimes (t_2\shuffle^T_\lambda T')+ \sum_{(t')}B_+^{a'}(t'_1)\otimes (T\shuffle^T_\lambda t'_2) + \emptyset\otimes (T\shuffle^T_\lambda T') \\
              =~ & \Delta(T\shuffle^T_\lambda T')
          \end{align*}
    therefore Equation \eqref{morphshuf} holds in this case.
    \item Next, consider the case $T = B_+^a(f)$ with $f$ a non-connected forest and $T' = B_+^{a'}(t')$. As before, let us write $\Delta(t') = \sum_{(t')}t'_1\otimes t'_2.$ In this case we have
          $$T\shuffle^T_\lambda T' = B_+^a(f\shuffle^T_\lambda T') + B_+^{a'}(T \shuffle^T_\lambda t') + \lambda B_+^{a.a'}(f\shuffle^T_\lambda t').$$
    Since $f$ is a non-connected forest, $f\shuffle^T_\lambda T'$ and $f\shuffle^T_\lambda t'$ are non-connected as well. Therefore taking their coproduct only amounts to tensoring these terms on the left by $\emptyset$. For the second term, we perform the same type of computation than in the first case:
    \begin{align*}
     & \Delta B_+^{a'}(T \shuffle^T_\lambda t') = (B_+^{a'}\otimes Id)\Delta(T\shuffle^T_\lambda t') + \emptyset\otimes B_+^{a'}(T\shuffle^T_\lambda t') \\
     = ~& (B_+^{a'}\otimes Id)(\shuffle^T_\lambda \otimes \shuffle^T_\lambda )\circ \tau_{23} \left(\Delta(T)\shuffle^T_\lambda \Delta(t')\right) + \emptyset\otimes B_+^{a'}(T\shuffle^T_\lambda t')  \quad\text{by the induction hypothesis}\\
     = ~ & (B_+^{a'}\otimes Id)(\shuffle^T_\lambda \otimes \shuffle^T_\lambda )\circ \tau_{23} \left(\sum_{(t')}\emptyset\otimes T \otimes t'_1\otimes t'_2\right) + \emptyset\otimes B_+^{a'}(T\shuffle^T_\lambda t') \\
     = ~ & \sum_{(t')} B_+^{a'}(t'_1)\otimes(T\shuffle^T_\lambda t'_2) + \emptyset\otimes B_+^{a'}(T\shuffle^T_\lambda t').
    \end{align*}
    We then find
    \begin{align*}
     \Delta(T\shuffle^T_\lambda T') & = \sum_{(t')}B_+^{a'}(t'_1)\otimes (T\shuffle^T_\lambda t') + \emptyset\otimes \left(B_+^{a}(f\shuffle^T_\lambda T')+ B_+^{a'}(T\shuffle^T_\lambda t') + B_+^{a}(f\shuffle^T_\lambda t')\right) \\
     & = \sum_{(t')}B_+^{a'}(t'_1)\otimes (T\shuffle^T_\lambda t') + \emptyset\otimes (T\shuffle^T_\lambda T').
    \end{align*}
    On the other hand:
    \begin{equation*}
     \Delta(T)\otimes\Delta(T')= \emptyset\otimes T\otimes\left((B_+^{a'}\otimes Id)\Delta(t')+\emptyset\otimes T'\right) = \sum_{(t')}\emptyset\otimes T\otimes B_+^{a'}(t'_1)\otimes t'_2+\emptyset\otimes T\otimes\emptyset\otimes T'.
    \end{equation*}
    We then have
$$(\shuffle^T_\lambda \otimes \shuffle^T_\lambda)\circ \tau_{23}\circ (\Delta \otimes \Delta)(T\otimes T') = \sum_{(t')}B_+^{a'}(t'_1)\otimes (T\shuffle^T_\lambda t'_2) + \emptyset\otimes (T\shuffle^T_\lambda T')=\Delta(T\shuffle^T_\lambda T')$$
and Equation \eqref{morphshuf} holds in this case as well.
    \item The case $T=B_+^a(t)$ and $T'=B_+^{a'}(f')$ reduces to the previous case by symmetry of $\shuffle^T_\lambda$.
    \item Finally, if $T = B_+^a(f)$ and $T' = B_+^{a'}(f')$ with $f$ and $f'$ two non-connected forests we have 
            $$T\shuffle^T_\lambda T' = B_+^a(f\shuffle^T_\lambda T') + B_+^{a'}(T \shuffle^T_\lambda f') + \lambda B_+^{a.a'}(f\shuffle^T_\lambda f').$$
    Each of the terms in the grafting operators in the RHS is a non-connected forest therefore we have 
    \begin{equation*}
     \Delta(T\shuffle^T_\lambda T') = \emptyset\otimes B_+^a(f\shuffle^T_\lambda T') + \emptyset\otimes B_+^{a'}(T \shuffle^T_\lambda f') + \lambda \emptyset\otimes B_+^{a.a'}(f\shuffle^T_\lambda f') = \emptyset\otimes (T\shuffle^T_\lambda T').
    \end{equation*}
    And on the other hand
    \begin{equation*}
     (\shuffle^T_\lambda \otimes \shuffle^T_\lambda)\circ \tau_{23}\circ (\Delta \otimes \Delta)(T\otimes T') =(\shuffle^T_\lambda \otimes \shuffle^T_\lambda)\circ \tau_{23}(\emptyset\otimes T\emptyset\otimes T') = \emptyset\otimes (T\shuffle^T_\lambda T').
    \end{equation*}
    Therefore Equation \eqref{morphshuf} holds in this case as well.
   \end{enumerate}
   Thus Equation \eqref{morphshuf} holds for all tree $T$ and $T'$ such that $|T|+|T'|=N+1$ and therefore, by induction, we have that Equation \eqref{morphshuf} holds for any trees $T$ and $T'$. Therefore, $(\mathcal{T}_\Omega,\shuffle_\lambda^T,\Delta)$ is a bialgebra as claimed.
 \end{proof}
\begin{remark}
     Due to the fact that $\shuffle^T_\lambda$ is not associative, the standard construction to build an antipode from a connected graded bialgebra does not apply here. And indeed, it is not possible to find a antipode that turns this bialgebra into a Hopf algebra. 

To see this, let us notice that we must have  $S(\emptyset) = \emptyset$. Thus for $S$ to be an antipode we need to have
\begin{align*}
    & \shuffle \circ (Id \otimes S)\circ \Delta (\tdtroisun{a}{b}{c}) = \shuffle \circ (S \otimes Id)\circ \Delta (\tdtroisun{a}{b}{c}) = \mu\circ \varepsilon(\tdtroisun{a}{b}{c}) \\
    \Longleftrightarrow~ & \underbrace{\emptyset \shuffle S(\tdtroisun{a}{b}{c})}_{S(\tdtroisun{a}{b}{c})} = \underbrace{S(\emptyset)\shuffle \tdtroisun{a}{b}{c}}_{\tdtroisun{a}{b}{c}} = 0.
\end{align*}
Therefore there is no map $S: \mathcal{T}_\Omega \to \mathcal{T}_\Omega$ that turns this bialgebra into a Hopf algebra.

     However one can define $S : \mathcal{T}_\Omega \to \mathcal{T}_\Omega$ such that
     $$S(T) = \begin{cases}
         0, \quad \text{if} \quad |\text{leaves}(T)|>1\\
         (-1)^{n+1}T^{-1}, \quad \text{otherwise}
     \end{cases}$$
     where $n = |T|$ and $T^{-1}$ is the tree with the same vertices than $T$ but with their order reversed.
     This map makes the following diagram commute
   \begin{equation*}
\begin{tikzcd}
\mathcal{T}_\Omega \arrow[r, "\epsilon"] \arrow[d, "\Delta"]           & \mathbb{K} \arrow[r, "\mu"] & \mathcal{T}_\Omega                                                 \\
\mathcal{T}_\Omega\otimes \mathcal{T}_\Omega \arrow[rr, "Id\otimes S"] &                             & \mathcal{T}_\Omega\otimes \mathcal{T}_\Omega \arrow[u, "\shuffle"]
\end{tikzcd}
   \end{equation*} 
     We call such a map a right antipode.
\end{remark}
 Computation suggest there are might be other coproducts that turn $(\calT_\Omega,\shuffle^T_\lambda)$ into a bialgebra. A classification is beyond the scope of this work and left for future studies. Furthermore, we will see in the next section that this bialgebra is canonical in some sense.
 
 \section{Dual bialgebra} \label{sec:two}
 
 In this section, we specialise our computations to the case $\lambda=0$: we will focus the coproduct $\Delta^*$ dual to $\shuffle^T$.

\subsection{The dual coproduct}

Let $(A,m)$ be a algebra freely generated as a vector space by a finite set $X$. Then
recall from \cite{sweedler1969hopf} that for any such finite dimensional algebra the product $m$ induces a coproduct $\Delta^*$ on the dual $A^*$. $\Delta^*$ is called the {\bf dual coproduct}; that is to say ``dual to $m$''. It is defined as follows: for a basis element $x\in X$, let us write $\langle x,.\rangle$ the dual element in $A^*$ defined on elements of the basis by  $\langle x,y\rangle=\delta_{xy}$ where $\delta_{xy}$ is the Kronecker symbol. Extending the maps $\langle x,.\rangle$ by linearity gives us a basis of $A^*$, and we define
\begin{align*}
     \Delta^* :  A^* &\to (A\otimes A)^* \simeq A^*\otimes A^*\\
      f &\mapsto \Delta^*(f)
 \end{align*}
by
\begin{equation*}
 \langle\Delta^*(f),a\otimes b\rangle= \langle f,m(a\otimes b)\rangle,\quad a,b \in A\otimes A.
\end{equation*}
It is a standard exercise (see for example \cite{manchon2004hopf}) that $\Delta^*$ is cocommutative (resp. coassociative) if, and only if, $m$ is commutative (resp. associative).

In the case of rooted forest, we can use the fact that $\calF$ is graded by the number of vertices: $\calF=\bigoplus_{n=0}^{\infty}\calF_n$ with $\calF_n$ finite dimensional for all $n\in\N$. We work with the {\bf graded dual} $\calF^\circ:=\bigoplus_{n=0}^{\infty}\calF_n^*$. This also work if the forests are decorated by a finite set $\Omega$, or if $\Omega$ is such that each $(\calF_{\Omega})_n$ is itself a graded, with each degree being finite dimensional. From now on, we will assume this to be true.

As in the generic case, a basis of $\calF^\circ$ is given by the the maps $\delta_F$ on rooted forests by $\delta_F(F)=1$ and $\delta_F(F')=0$ if $F\neq F'$, and extended by linearity to a map on $\calF$. Then $\calF$ and $\calF^\circ$ are clearly isomorphic as vector spaces and therefore, as a short hand notation, we will from now on write $F\in\calF^\circ$ instead of $\delta_F\in\calF^\circ$. We use the same notations in the decorated case.\\

From now on, $\Delta^*$ will be the coproduct dual to the shuffle of rooted forests. Recall also that for a rooted forest $F = T_1\dots T_n$ and $I\subseteq [n]$ we write $F_I:=\sqcup_{i\in I}T_i$. We prove a recursive formula for this coproduct, which for rooted trees is reminiscent of the Hochschild cocycle property of Connes-Kreimer coproduct (see for example \cite{connes1999hopf}) which is important for applications of this coproduct to other domains, e.g. SPDEs (see for example \cite{hairer2015geometric}).
\begin{thm} \label{thm:coprod_dual_ind}
 We have $\Delta^*(\emptyset) = \emptyset\otimes\emptyset$, and for any $n\geq2$, set $[n]_i:=[n]\setminus\{i\}$. Then for any non-connected forest $F = T_1\dots T_n$
 \begin{equation} \label{eq:Delta_dual_forest}
    \Delta^*(F) = F\otimes \emptyset + \emptyset\otimes F + \sum_{i = 1}^{n}(\sqcup\otimes\sqcup)\circ\tau_{23}\left(\tilde{\Delta}^*(T_i)\otimes \left(\sum_{I\subset [n]_i} \alpha_{I,n}F_I\otimes F_{[n]_i\setminus I}\right)\right)
\end{equation}
with $\sqcup$ the concatenation of rooted forests, $\tau_{23}$ the usual flip defined by $\tau_{23}(a\otimes b\otimes c\otimes d)=a\otimes c\otimes b\otimes d$ and 
\begin{equation*}
 \alpha_{I,n} := (|I|+1)(n - |I|).
\end{equation*}
For a tree $T = B_+^a(F)$ we have
\begin{equation}\label{coproddualtree}
    \Delta^*(T) = (B_+^a\otimes Id)(Id\otimes \Pi_{\mathcal{T}_\Omega})\Delta^*(F) + (Id\otimes B_+^a)(\Pi_{\mathcal{T}_\Omega}\otimes Id)\Delta^*(F).
\end{equation}
with $\Pi_{\mathcal{T}_\Omega} : \mathcal{F}_\Omega \to \mathcal{T}_\Omega$ the projection to rooted trees parallely to non-connected rooted forests.
\end{thm}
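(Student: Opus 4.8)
The plan is to establish both formulas by computing the dual pairing $\langle \Delta^*(F), G \otimes H\rangle = \langle F, G \shuffle^T H\rangle$ against an arbitrary pair of basis forests $G, H$, using the recursive Definition \ref{def:shuffle_forests} of the shuffle product. Since $\Delta^*$ is the coproduct dual to $\shuffle^T$, proving \eqref{eq:Delta_dual_forest} and \eqref{coproddualtree} amounts to showing that the right-hand sides produce exactly the correct coefficient of $F$ in every shuffle product $G \shuffle^T H$. I would proceed by induction on $|F|$, treating the two cases (forest, tree) in tandem since \eqref{coproddualtree} for a tree $T=B_+^a(F)$ refers back to $\Delta^*$ of the smaller forest $F$.

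For the forest formula \eqref{eq:Delta_dual_forest}, write $F = T_1 \cdots T_n$ with $n \geq 2$. The key observation is that a forest $F$ appears in $G \shuffle^T H$ only in ways dictated by the recursive rule \eqref{eq:def_shuffle_forests}: when neither $G$ nor $H$ is a tree, the product shuffles one tree $T_i$ of the output with factors coming from both $G$ and $H$, while the remaining trees are distributed between $G$ and $H$ by concatenation. First I would fix a distinguished tree $T_i$ of $F$ (the one arising from the recursive $T_i \shuffle^T t_j$ step) and count, via the combinatorial weight $\frac{1}{kn}$ in \eqref{eq:def_shuffle_forests}, how the remaining $n-1$ trees split into a subset $I$ going to one side and its complement going to the other. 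The coefficient $\alpha_{I,n} = (|I|+1)(n-|I|)$ should emerge precisely from reconciling these $\frac{1}{kn}$-type normalisations with the number of ways to insert the shuffled tree $T_i$ among the concatenated factors on each side; the boundary terms $F \otimes \emptyset$ and $\emptyset \otimes F$ account for the cases $G = F, H = \emptyset$ and vice versa. The map $(\sqcup \otimes \sqcup)\circ \tau_{23}$ reassembles the two contributions of $T_i$ (obtained from $\tilde\Delta^*(T_i)$, the reduced coproduct) with the concatenated remainders $F_I$ and $F_{[n]_i\setminus I}$.

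For the tree formula \eqref{coproddualtree}, let $T = B_+^a(F)$. Here I would pair against $G \shuffle^T H$ and use that, by rule \eqref{eq:def_shuffle_trees}, a tree $B_+^a(\cdots)$ with root decoration $a$ arises in a shuffle of two trees only from the first two terms $B_+^a(f \shuffle^T T') + B_+^{a'}(T\shuffle^T f')$ (the $\lambda$-term vanishes since we are in the case $\lambda=0$). Matching the root $a$ forces exactly one of the two shuffled trees to have its root stripped and grafted under $a$; dualising, this produces the two summands on the right-hand side of \eqref{coproddualtree}, with the projections $\Pi_{\mathcal{T}_\Omega}$ enforcing that the factor carrying the root $a$ must itself be a single tree (not a non-connected forest), since only trees can be built by $B_+^a$. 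I expect the bookkeeping of the coefficients $\alpha_{I,n}$ in the forest case to be the main obstacle: one must carefully verify that the fractional weights $\frac{1}{kn}$ accumulated through nested applications of \eqref{eq:def_shuffle_forests}, summed over all the ways the output forest can be cut into a distinguished tree plus two concatenated remainders, collapse exactly to $(|I|+1)(n-|I|)$. Establishing this identity cleanly — perhaps by first proving it for small $n$ to fix the normalisation and then arguing inductively that removing $T_i$ reduces to the $(n-1)$-factor case — is where the essential combinatorial content lies, whereas the tree formula \eqref{coproddualtree} follows comparatively directly once the forest formula and the structure of \eqref{eq:def_shuffle_trees} are in hand.
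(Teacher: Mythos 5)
Your overall strategy coincides with the paper's: characterise, via the dual pairing, all pairs $(G,H)$ with $\langle F, G\shuffle^T H\rangle\neq 0$ using the recursive definition of $\shuffle^T$, and read off the coefficients. However, two specific steps in your sketch are wrong or materially incomplete.

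First, in the tree case your description of the projections is backwards. Writing $T=B_+^a(F)$ and pairing against $T_1\shuffle^T T_2$ with $T_i=B_+^{a_i}(F_i)$, rule \eqref{eq:def_shuffle_trees} gives
$\langle T, T_1\shuffle^T T_2\rangle=\delta_{aa_1}\langle F,F_1\shuffle^T T_2\rangle+\delta_{aa_2}\langle F,T_1\shuffle^T F_2\rangle$.
In the first term the factor carrying the root $a$ is $T_1$, and it is reconstructed as $B_+^a(F_1)$ from an \emph{arbitrary} forest $F_1$; no constraint is needed there, precisely because $B_+^a$ of any forest is a tree. It is the \emph{other} factor, $T_2$, which must already be connected, since a tree can only arise as a shuffle of two trees. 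Hence $\Pi_{\calT_\Omega}$ acts on the slot opposite to the one receiving $B_+^a$, exactly as written in \eqref{coproddualtree}. With your placement (projection on the factor carrying the root) the formula comes out wrong: for instance the term $T\otimes\emptyset$, which arises from the term $F\otimes\emptyset$ of $\Delta^*(F)$ with $F$ non-connected, would be annihilated by $\Pi_{\calT_\Omega}\otimes Id$.

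Second, the coefficient $\alpha_{I,n}$ is not a count of "ways to insert the shuffled tree $T_i$ among the concatenated factors", nor does it involve weights accumulated through nested applications of \eqref{eq:def_shuffle_forests}. Forests are non-planar, so there is no insertion multiplicity, and the inner shuffle $t_j\shuffle^T t'_k$ is absorbed by duality into $\tilde\Delta^*(T_i)$, so rule \eqref{eq:def_shuffle_forests} is unwound exactly once. The pair $(f^1,f^2)$ contributing a given term has $|I|+1$ and $n-|I|$ connected components respectively, so that single application of \eqref{eq:def_shuffle_forests} produces $F$ with coefficient exactly $\tfrac{1}{(|I|+1)(n-|I|)}$; the factor $\alpha_{I,n}=(|I|+1)(n-|I|)$ is simply its inverse. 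This is a two-line observation, not the main obstacle you anticipate, and no induction on small $n$ is needed to fix the normalisation. (Relatedly, your global induction on $|F|$ is superfluous: $\Delta^*$ is defined abstractly by duality, so both \eqref{eq:Delta_dual_forest} and \eqref{coproddualtree} can be verified directly, which is what the paper does.)
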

\begin{proof}
 First notice that $F_1\shuffle^T F_2=\emptyset\Longleftrightarrow F_1=F_2=\emptyset$. This proves $\Delta^*(\emptyset)=\emptyset\otimes\emptyset$.\\
 
 Let us now prove Equation \eqref{eq:Delta_dual_forest}. For $F\in\calF_\Omega^+$ we are looking for all forests $f_1$ and $f_2$ such that $\langle F,f_1\shuffle^T f_2\rangle\neq0$. First observe that we have the trivial solutions $f_1=F$, $f_2=\emptyset$ and $f_1=\emptyset$, $f_2=F$. These give us the terms $F\otimes \emptyset$ and $\emptyset\otimes F$ in Equation \eqref{eq:Delta_dual_forest}.
 
 For the non-trivial solutions, let us set $f^1=t_1\cdots t_p$ and $f^2=t'_1\cdots t'_q$. From Equation \eqref{eq:def_shuffle_forests} of the definition of the shuffle of rooted forests we must have $p+q=n+1$. We must also have that one of the tree constituting $F$ appears in $t_j\shuffle t'_k$, for some $j\in\{1,\cdots,p\}$ and $k\in\{1,\cdots,q\}$. The other trees of $F$ must then be the other trees of $f^1$ and $f^2$. Writing this rigorously be obtain
  \begin{equation*}
   \exists i\in\{1,\cdots,n\},j\in\{1,\cdots,p\},k\in\{1,\cdots,q\}\text{ s.t. }\langle T_i,t_j\shuffle t'_k\rangle\neq0~\wedge~ \langle F_{[n]_i},f^1_{[p]_j}f^2_{[q]_k}\rangle\neq0.
  \end{equation*}
  Since by assumption $t_j$ and $t'_k$ are not empty, the condition $\langle T_i,t_j\shuffle t'_k\rangle\neq0$ gives precisely that $\tilde\Delta^*(T_i)$ contains the tensor product $t_j\otimes t'_k$.
  
  Let us turn our attention to the condition $\langle F_{[n]_i},f^1_{[p]_j}f^2_{[q]_k}\rangle\neq0$. It is equivalent to saying to each tree in $F$ other than the i-th $T_i$ must appear either in $f^1$ or in $f^2$. Therefore we must split $F_{[n]_i}$ into two parts, eventually empty. Each of these part must then become either $f^1$ or $f^2$. Therefore we obtain the terms $F_I\otimes F_{[n]_i\setminus I}$ in Equation \eqref{eq:Delta_dual_forest}. We have constructed our rooted forests $f^1$ and $f^2$: they must be of the form
  \begin{equation} \label{eq:intermediate_computation}
   f^1\otimes f^2= (\sqcup\otimes\sqcup)\circ\tau_{23}\left(\tilde{\Delta}^*(T_i)\otimes \left(F_I\otimes F_{[n]_i\setminus I}\right)\right)
  \end{equation}
  for some $i\in[n]$ and $I\subseteq [n]_i$. However, observe that the coefficient $\frac{1}{nk}$ in Equation \eqref{eq:def_shuffle_forests} has not be taken into account. The left tensor in Equation \eqref{eq:intermediate_computation} has $|I|+1$ connected components, and the right tensor has $n-|I|$ connected components if $\tilde\Delta(T_i)\neq0$. Thus we find that, for the $f^1$ and $f^2$ in Equation \eqref{eq:intermediate_computation} we have 
  \begin{equation*}
   f^1\shuffle^T f^2=\frac{1}{(|I|+1)(n-|I|)}F+\cdots.
  \end{equation*}
  Thus, with the coefficients $\alpha_{I,n}$, we have the right formula for $\Delta^*(F)$. \\
  
   To prove Equation \eqref{coproddualtree} for $T=B^a_+(F)$ let us look for trees $T_1$ and $T_2$ such that $\langle T, T_1\shuffle^T T_2\rangle\neq0$. Indeed, by definition of the shuffle of rooted forests, we cannot have $\langle T, F_1\shuffle^T F_2\rangle\neq0$ if $F_1$ and $F_2$ are not a trees. First, observe that $(T_1,T_2)=(T,\emptyset)$ and $(T_1,T_2)=(\emptyset,T)$ are two (trivial) solutions.
   
   To find the non-trivial solutions, assume $T_1$ and $T_2$ are non-empty and write $T_i=B_+^{a_i}(F_i)$. Then Equation \eqref{eq:def_shuffle_trees} of the definition of the shuffle of rooted forests gives
   \begin{equation*}
    \langle T, T_1\shuffle^T T_2\rangle = \delta_{aa_1}\langle F, F_1\shuffle^T T_2\rangle + \delta_{aa_2}\langle F, T_1\shuffle^T F_2\rangle
   \end{equation*}
   with $\delta_{ax}$ the Kronecker symbol. Then we see that the tensor products appearing in $\Delta^*(T)$ with $T_1\neq\emptyset\neq T_2$ are precisely the tensor products appearing in
   \begin{equation*}
    (Id\otimes\Pi_{\calT_\Omega})(\Delta^*(F)-F\otimes\emptyset)+(\Pi_{\mathcal{T}_\Omega}\otimes Id)(\Delta^*(F)-\emptyset\otimes F).
   \end{equation*}
   We conclude by noticing that $T\otimes\emptyset=(B^a_+\otimes I)(Id\otimes\Pi_{\calT_\Omega})(F\otimes\emptyset)$ and $\emptyset\otimes T=(Id\otimes B_+^a)(\Pi_{\mathcal{T}_\Omega}\otimes Id)(\emptyset\otimes F)$. Thus putting everything together we obtain
   \begin{align*}
    \Delta^*(T) & = T\otimes\emptyset+\emptyset\otimes T+(Id\otimes\Pi_{\calT_\Omega})(\Delta^*(F)-F\otimes\emptyset)+(\Pi_{\mathcal{T}_\Omega}\otimes Id)(\Delta^*(F)-\emptyset\otimes F) \\
    & = (Id\otimes\Pi_{\calT_\Omega})\Delta^*(F)+(\Pi_{\mathcal{T}_\Omega}\otimes Id)\Delta^*(F)
   \end{align*}
   as claimed.
\end{proof}
The inductive formula for $\Delta^*$ given by Theorem \ref{thm:coprod_dual_ind} allows to easily compute coproducts of rooted forests of reasonnable sizes, as the next example shows.
\begin{Eg}
Using the inductive formulas of Theorem \ref{thm:coprod_dual_ind} we obtain the following expressions for coproducts. \\
$\Delta^*\Big($
 $\otimes$ \tdun{e}.

\end{Eg}

\subsection{Admissible families}

One of the remarquable feature of Connes-Kreimer coproduct is that it admits a purely combinatorial description, in terms of \emph{admissible cuts} (see for example \cite{Foissy01} for the definition). We now give a similar combinatorial description of the 
dual coproduct $\Delta^*$. For this we will need to introduce some new combinatorial concepts, but first let us recall that for a rooted tree $T$, the {\bf fertility} of a vertex $v\in V(T)$ of $T$ is the number of edges $(v,v')\in E(T)$, i.e. the number of direct descendant of the vertex $v$.
\begin{defi} \label{def:adm_family}
 \begin{enumerate}
  \item Let $T\in\calT_\Omega$ be a rooted tree and $\Lambda \subseteq V(T)$ a subset of vertices of $T.$ We define {\bf the tree induced by $\Lambda$} to be the rooted tree $T_\Lambda \in \mathcal{T}_\Omega$ as the tree $(V(T_\Lambda), E(T_\Lambda))$ such that $V(T_\Lambda) = \Lambda$ and
$$E(T_\Lambda) = \{(v_1,v_2)  \in V(T_\Lambda)^2| v_1 <_T v_2 \; \text{and} \; \nexists v_3\in\Lambda  \; \text{such that} \; v_1 <_T v_3 <_T v_2 \}.$$
\item For $T$ a rooted tree, $\Gamma\subseteq V(T)$ is called {\bf admissible} or an {\bf admissible family} when we simultaneously have:
    \begin{itemize}
    \item $T_\Gamma$ and $T_{V(T)\setminus\Gamma}$ are both rooted trees;
    \item for every node $v \in \Gamma$ with $\text{fert}(v)\geq2,$  we have  $\text{fert}(v)$ in $T_{\Gamma}$ to be equal to $\text{fert}(v)$ in $T.$
    \item for every node $v \in V(T)\setminus\Gamma$ with $\text{fert}(v)\geq2,$ we have  $\text{fert}(v)$ in $T_{V(T)\setminus\Gamma}$ to be equal to $\text{fert}(v)$ in $T.$
    \end{itemize}
    \item Let $T$ be a rooted tree and $\Gamma\subseteq V(T)$. We define {\bf contraction of $T$ with respect to $\Gamma$} to be the undecorated forest  $ \text{cont}_{\Gamma}(T)\in\calT$ obtained by contracting all the subtrees $S\subseteq T$ such that
    \begin{equation*}
     V(S)\subset \Gamma\text{ and }\nexists v \in {V(T)\setminus\Gamma}\text{ such that }root(S) <_T v
    \end{equation*}
    and contracting also all the subtrees $S\subseteq T$ such that
    \begin{equation*}
     V(S)\subseteq V(T)\setminus{\Gamma}\text{ and }\nexists v \in {\Gamma}\text{ such that }root(S) <_T v.
    \end{equation*}
    We also define the {\bf contracted fertility} (relative to $\Gamma$) to be $1$ if cont$_\Gamma(T)$ is empty or reduced to only one vertex and
     $$c_{\Gamma} = \prod_{\substack{v\in \text{cont}_{\Gamma}(T) \\ v\notin \ell(\text{cont}_{\Gamma}(T) )}} \text{fert}(v)$$
     otherwise.
\end{enumerate}
\end{defi}
\begin{remark} \label{rk:fertility_relative}
 Let us point out that $c_\Gamma$ depends on the trees in which we take $\Gamma$, in the sense that if we have $\Gamma\subseteq V(T')\subseteq V(T)$ where $T'$ is a subtree of $T$, then $c_\Gamma$ is ambiguous since it is not the same if we see $\Gamma$ as a subset of $V(T')$ or of $V(T)$. We do not write $c_{\Gamma,T}$ since it is rather cumbersome to write and it should be clear from context in which trees we are taking $\Gamma$ to be.
\end{remark}
Notice that for any tree $T$, $\emptyset$ and $V(T)$ are admissible families of $T$. In both case we have cont$_\Gamma(T)=\tun$ if $T$ is not empty.

Also notice that, by construction $T_\Lambda$ is only a rooted forest, but we use this notation since we will focus on admissible subset of vertices. Furthermore, in general we do not have $E(T_\Lambda)\subseteq E(T)$. This is illustrated in the following example:
\begin{Eg}
Let $T$ be the following tree:
    \begin{equation*}
T = \begin{tikzpicture}[line cap=round,line join=round,>=triangle 45,x=0.5cm,y=0.5cm,baseline=(base)]
            \node (base) at (0,6) {};
            \filldraw[color=black] (0,1)circle(1pt);
            \filldraw[color=black] (0,2)circle(1pt);
            \filldraw[color=black] (0,3)circle(1pt);
            \filldraw[color=black] (-2,4)circle(1pt);
            \filldraw[color=black] (0,4)circle(1pt);
            \filldraw[color=black] (1,4)circle(1pt);
            \filldraw[color=black] (-3,5)circle(1pt);
            \filldraw[color=black] (-2.25,5)circle(1pt);
            \filldraw[color=black] (-1.75,5)circle(1pt);
            \filldraw[color=black] (-1,5)circle(1pt);
            \filldraw[color=black] (1,5)circle(1pt);
            \filldraw[color=black] (0,6)circle(1pt);
            \filldraw[color=black] (2,6)circle(1pt);
            \filldraw[color=black] (2,7)circle(1pt);
            \filldraw[color=black] (1,8)circle(1pt);
            \filldraw[color=black] (2,8)circle(1pt);
            \filldraw[color=black] (3,8)circle(1pt);
            \filldraw[color=black] (1,9)circle(1pt);
            \filldraw[color=black] (0,10)circle(1pt);
            \filldraw[color=black] (2,10)circle(1pt);
			\draw [line width=.5pt] (0.,1.)-- (0,2.);
			\draw [line width=.5pt] (0.,2.)-- (0,3.);
            \draw [line width=.5pt] (0.,3.)-- (-2,4.);
            \draw [line width=.5pt] (0.,3.)-- (0,4.);
            \draw [line width=.5pt] (0.,3.)-- (1,4.);
            \draw [line width=.5pt] (-2.,4.)-- (-3,5.);
            \draw [line width=.5pt] (-2.,4.)-- (-2.25,5.);
            \draw [line width=.5pt] (-2.,4.)-- (-1.75,5.);
            \draw [line width=.5pt] (-2.,4.)-- (-1,5.);
            \draw [line width=.5pt] (1.,4.)-- (1,5.);
            \draw [line width=.5pt] (1.,5.)-- (0,6.);
            \draw [line width=.5pt] (1.,5.)-- (2,6.);
            \draw [line width=.5pt] (2.,6.)-- (2,7.);
            \draw [line width=.5pt] (2.,7.)-- (1,8.);
            \draw [line width=.5pt] (2.,7.)-- (2,8.);
            \draw [line width=.5pt] (2.,7.)-- (3,8.);
            \draw [line width=.5pt] (1.,8.)-- (1,9.);
            \draw [line width=.5pt] (1.,9.)-- (0,10.);
            \draw [line width=.5pt] (1.,9.)-- (2,10.);
			\draw (0,1) node[below]{\scriptsize{$a$}};
            \draw (0,2) node[left]{\scriptsize{$b$}};
            \draw (0,3) node[left]{\scriptsize{$c$}};
            \draw (-2,4) node[left]{\scriptsize{$d$}};
            \draw (0,4) node[above]{\scriptsize{$e$}};
            \draw (1,4) node[right]{\scriptsize{$f$}};
            \draw (-3,5) node[above]{\scriptsize{$g$}};
            \draw (-2.25,5) node[above]{\scriptsize{$h$}};
            \draw (-1.75,5) node[above]{\scriptsize{$i$}};
            \draw (-1,5) node[above]{\scriptsize{$j$}};
            \draw (1,5) node[above]{\scriptsize{$k$}};
            \draw (0,6) node[above]{\scriptsize{$l$}};
            \draw (2,6) node[right]{\scriptsize{$m$}};
            \draw (2,7) node[right]{\scriptsize{$n$}};
            \draw (1,8) node[left]{\scriptsize{$o$}};
            \draw (2,8) node[above]{\scriptsize{$p$}};
            \draw (3,8) node[right]{\scriptsize{$q$}};
            \draw (1,9) node[left]{\scriptsize{$r$}};
            \draw (0,10) node[left]{\scriptsize{$s$}};
            \draw (2,10) node[left]{\scriptsize{$t$}};
\end{tikzpicture}
    \end{equation*}
The set $\Gamma = \{a,c,d,e,g,h,i,j,k,l,m,r,s,t\}$ gives us a admissible family of $T.$
\begin{equation*}
\begin{tikzpicture}[line cap=round,line join=round,>=triangle 45,x=0.5cm,y=0.5cm,baseline=(base)]
            \node (base) at (0,6) {};    
            
            \filldraw[color=red] (0,1)circle(1pt);
            \filldraw[color=black] (0,2)circle(1pt);
            \filldraw[color=red] (0,3)circle(1pt);
            \filldraw[color=red] (-2,4)circle(1pt);
            \filldraw[color=red] (0,4)circle(1pt);
            \filldraw[color=black] (1,4)circle(1pt);
            \filldraw[color=red] (-3,5)circle(1pt);
            \filldraw[color=red] (-2.25,5)circle(1pt);
            \filldraw[color=red] (-1.75,5)circle(1pt);
            \filldraw[color=red] (-1,5)circle(1pt);
            \filldraw[color=red] (1,5)circle(1pt);
            \filldraw[color=red] (0,6)circle(1pt);
            \filldraw[color=red] (2,6)circle(1pt);
            \filldraw[color=black] (2,7)circle(1pt);
            \filldraw[color=black] (1,8)circle(1pt);
            \filldraw[color=black] (2,8)circle(1pt);
            \filldraw[color=black] (3,8)circle(1pt);
            \filldraw[color=red] (1,9)circle(1pt);
            \filldraw[color=red] (0,10)circle(1pt);
            \filldraw[color=red] (2,10)circle(1pt);
			\draw [line width=.5pt] (0.,1.)-- (0,2.);
			\draw [line width=.5pt] (0.,2.)-- (0,3.);
            \draw [line width=.5pt] (0.,3.)-- (-2,4.);
            \draw [line width=.5pt] (0.,3.)-- (0,4.);
            \draw [line width=.5pt] (0.,3.)-- (1,4.);
            \draw [line width=.5pt] (-2.,4.)-- (-3,5.);
            \draw [line width=.5pt] (-2.,4.)-- (-2.25,5.);
            \draw [line width=.5pt] (-2.,4.)-- (-1.75,5.);
            \draw [line width=.5pt] (-2.,4.)-- (-1,5.);
            \draw [line width=.5pt] (1.,4.)-- (1,5.);
            \draw [line width=.5pt] (1.,5.)-- (0,6.);
            \draw [line width=.5pt] (1.,5.)-- (2,6.);
            \draw [line width=.5pt] (2.,6.)-- (2,7.);
            \draw [line width=.5pt] (2.,7.)-- (1,8.);
            \draw [line width=.5pt] (2.,7.)-- (2,8.);
            \draw [line width=.5pt] (2.,7.)-- (3,8.);
            \draw [line width=.5pt] (1.,8.)-- (1,9.);
            \draw [line width=.5pt] (1.,9.)-- (0,10.);
            \draw [line width=.5pt] (1.,9.)-- (2,10.);
			\draw (0,1) node[below][color=red]{\scriptsize{$a$}};
            \draw (0,2) node[left]{\scriptsize{$b$}};
            \draw (0,3) node[left][color=red]{\scriptsize{$c$}};
            \draw (-2,4) node[left][color=red]{\scriptsize{$d$}};
            \draw (0,4) node[above][color=red]{\scriptsize{$e$}};
            \draw (1,4) node[right]{\scriptsize{$f$}};
            \draw (-3,5) node[above][color=red]{\scriptsize{$g$}};
            \draw (-2.25,5) node[above][color=red]{\scriptsize{$h$}};
            \draw (-1.75,5) node[above][color=red]{\scriptsize{$i$}};
            \draw (-1,5) node[above][color=red]{\scriptsize{$j$}};
            \draw (1,5) node[above][color=red]{\scriptsize{$k$}};
            \draw (0,6) node[above][color=red]{\scriptsize{$l$}};
            \draw (2,6) node[right][color=red]{\scriptsize{$m$}};
            \draw (2,7) node[right]{\scriptsize{$n$}};
            \draw (1,8) node[left]{\scriptsize{$o$}};
            \draw (2,8) node[above]{\scriptsize{$p$}};
            \draw (3,8) node[right]{\scriptsize{$q$}};
            \draw (1,9) node[left][color=red]{\scriptsize{$r$}};
            \draw (0,10) node[left][color=red]{\scriptsize{$s$}};
            \draw (2,10) node[left][color=red]{\scriptsize{$t$}};
\end{tikzpicture}
\end{equation*}
From $\Gamma$ we can build the following trees
\begin{equation*}
 T_\Gamma = \begin{tikzpicture}[line cap=round,line join=round,>=triangle 45,x=0.5cm,y=0.5cm, baseline=(base)]
            \node (base) at (0,3) {};
            \filldraw[color=red] (0,1)circle(1pt);
            
            \filldraw[color=red] (0,2)circle(1pt);
            \filldraw[color=red] (-2,3)circle(1pt);
            \filldraw[color=red] (0,3)circle(1pt);
            
            \filldraw[color=red] (-3,4)circle(1pt);
            \filldraw[color=red] (-2.25,4)circle(1pt);
            \filldraw[color=red] (-1.75,4)circle(1pt);
            \filldraw[color=red] (-1,4)circle(1pt);
            \filldraw[color=red] (1,3)circle(1pt);
            \filldraw[color=red] (0,4)circle(1pt);
            \filldraw[color=red] (2,4)circle(1pt);
            
            \filldraw[color=red] (2,5)circle(1pt);
            \filldraw[color=red] (1,6)circle(1pt);
            \filldraw[color=red] (3,6)circle(1pt);
			\draw [line width=.5pt][color=red] (0.,1.)-- (0,2.);
			\draw [line width=.5pt][color=red] (0.,2.)-- (0,3.);
            \draw [line width=.5pt][color=red] (0.,2.)-- (-2,3.);
            \draw [line width=.5pt][color=red] (0.,2.)-- (1,3.);
            \draw [line width=.5pt][color=red] (-2.,3.)-- (-3,4);
            \draw [line width=.5pt][color=red] (-2.,3.)-- (-2.25,4);
            \draw [line width=.5pt][color=red] (-2.,3.)-- (-1.75,4);
            \draw [line width=.5pt][color=red] (-2.,3.)-- (-1,4);
            \draw [line width=.5pt][color=red] (1.,3.)-- (0,4.);
            \draw [line width=.5pt][color=red] (1.,3.)-- (2,4.);
            \draw [line width=.5pt][color=red] (2.,4.)-- (2,5.);
            \draw [line width=.5pt][color=red] (2.,5.)-- (3,6.);
            \draw [line width=.5pt][color=red] (2.,5.)-- (1,6.);
            
			\draw (0,1) node[below][color=red]{\scriptsize{$a$}};
           
            \draw (0,2) node[left][color=red]{\scriptsize{$c$}};
            \draw (-2,3) node[left][color=red]{\scriptsize{$d$}};
            \draw (0,3) node[left][color=red]{\scriptsize{$e$}};
            
            \draw (-3,4) node[above][color=red]{\scriptsize{$g$}};
            \draw (-2.25,4) node[above][color=red]{\scriptsize{$h$}};
            \draw (-1.75,4) node[above][color=red]{\scriptsize{$i$}};
            \draw (-1,4) node[above][color=red]{\scriptsize{$j$}};
            \draw (1,3) node[above][color=red]{\scriptsize{$k$}};
            \draw (0,4) node[above][color=red]{\scriptsize{$l$}};
            \draw (2,4) node[right][color=red]{\scriptsize{$m$}};
            
            \draw (2,5) node[left][color=red]{\scriptsize{$r$}};
            \draw (1,6) node[left][color=red]{\scriptsize{$s$}};
            \draw (3,6) node[left][color=red]{\scriptsize{$t$}};
\end{tikzpicture}, \quad T_{V(T)\setminus \Gamma} =  \begin{tikzpicture}[line cap=round,line join=round,>=triangle 45,x=0.5cm,y=0.5cm,baseline=(base)]
            \node (base) at (0,3) {};
            
            \filldraw[color=black] (0,1)circle(1pt);

            \filldraw[color=black] (0,2)circle(1pt);
            
            \filldraw[color=black] (0,3)circle(1pt);
            \filldraw[color=black] (-1,4)circle(1pt);
            \filldraw[color=black] (0,4)circle(1pt);
            \filldraw[color=black] (1,4)circle(1pt);

			\draw [line width=.5pt] (0.,1.)-- (0,2.);
			\draw [line width=.5pt] (0.,2.)-- (0,3.);
            \draw [line width=.5pt] (0.,3.)-- (-1,4.);
            \draw [line width=.5pt] (0.,3.)-- (0,4.);
            \draw [line width=.5pt] (0.,3.)-- (1,4.);
            
			
            \draw (0,1) node[left]{\scriptsize{$b$}};
           
            \draw (0,2) node[right]{\scriptsize{$f$}};
           
            \draw (0,3) node[right]{\scriptsize{$n$}};
            \draw (-1,4) node[left]{\scriptsize{$o$}};
            \draw (0,4) node[above]{\scriptsize{$p$}};
            \draw (1,4) node[right]{\scriptsize{$q$}};
         
\end{tikzpicture},\quad
\text{cont}_\Gamma (T) = \begin{tikzpicture}[line cap=round,line join=round,>=triangle 45,x=0.5cm,y=0.5cm, baseline=(base)]
            \node (base) at (0,4) {};
            \filldraw[color=black] (0,1)circle(1pt);
            \filldraw[color=black] (0,2)circle(1pt);
            \filldraw[color=black] (0,3)circle(1pt);
            \filldraw[color=black] (-1,4)circle(1pt);
            \filldraw[color=black] (0,4)circle(1pt);
            \filldraw[color=black] (1,4)circle(1pt);
            \filldraw[color=black] (1,5)circle(1pt);
            \filldraw[color=black] (0,6)circle(1pt);
            \filldraw[color=black] (2,6)circle(1pt);
            \filldraw[color=black] (2,7)circle(1pt);
            \filldraw[color=black] (1,8)circle(1pt);
            \filldraw[color=black] (2,8)circle(1pt);
            \filldraw[color=black] (3,8)circle(1pt);
            \filldraw[color=black] (1,9)circle(1pt);
			\draw [line width=.5pt] (0.,1.)-- (0,2.);
			\draw [line width=.5pt] (0.,2.)-- (0,3.);
            \draw [line width=.5pt] (0.,3.)-- (-1,4.);
            \draw [line width=.5pt] (0.,3.)-- (0,4.);
            \draw [line width=.5pt] (0.,3.)-- (1,4.);
            \draw [line width=.5pt] (1.,4.)-- (1,5.);
            \draw [line width=.5pt] (1.,5.)-- (0,6.);
            \draw [line width=.5pt] (1.,5.)-- (2,6.);
            \draw [line width=.5pt] (2.,6.)-- (2,7.);
            \draw [line width=.5pt] (2.,7.)-- (1,8.);
            \draw [line width=.5pt] (2.,7.)-- (2,8.);
            \draw [line width=.5pt] (2.,7.)-- (3,8.);
            \draw [line width=.5pt] (1.,8.)-- (1,9.);

\end{tikzpicture}.
\end{equation*}
\end{Eg}
 We will give a combinatorial description of the coproduct $\Delta^*$ using the following inductive formulas for the admissible families and contracted fertility.
 \begin{Lemme} \label{lem:adm_fam_ind}
  \begin{enumerate}
   \item For a tree $T=B_+^a(t)$ with $t\in\calT_\Omega$, the admissible families of $T$ are the sets 
   \begin{equation*}
    \Gamma=\tilde\Gamma\quad\text{and}\quad\Gamma=\{a\}\cup\tilde\Gamma
   \end{equation*}
   with $\tilde\Gamma$ an admissible family of $t$. In both cases the reduced fertility is given by $c_\Gamma=c_{\tilde\Gamma}$.
   \item For a tree $T=B_+^a(T_1\cdots T_n)$ with $n\geq2$ the admissible families of $T$ are the sets
   \begin{equation*}
    \Gamma=\Gamma_i\quad\text{and}\quad\Gamma=\{a\}\cup\Gamma_i\cup V(T_1\cdots\widehat{T_i}\cdots T_n)
   \end{equation*}
   some some $i\in[n]$ and $\Gamma_i$ an admissible family of $T_i$ with $\Gamma_i\neq V(T_i)$. In both cases the reduced fertility is given by $c_\Gamma=n c_{\Gamma_i}$.
  \end{enumerate}
 \end{Lemme}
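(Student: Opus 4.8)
The plan is to read off both items \emph{directly} from Definition~\ref{def:adm_family}, splitting in each case according to whether the grafted root $a$ lies in $\Gamma$, and treating item (1) (root of fertility $1$) and item (2) (root of fertility $n\geq2$) separately; no outer induction is needed, since the Lemma \emph{is} the inductive step. Two structural facts drive everything. First, $a$ is the unique minimal vertex of $T$ and an ancestor of every other vertex, so whichever of $T_\Gamma,\,T_{V(T)\setminus\Gamma}$ contains $a$ must have $a$ as its root and equals a $B_+^a$ of the induced structure one level up. Second, the fertility clauses of admissibility only constrain vertices of fertility $\geq2$, so a fertility-$1$ vertex is never directly constrained.

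For item (1), with $T=B_+^a(t)$ and $\text{fert}(a)=1$, I would first record the elementary reductions: if $a\notin\Gamma$ then $T_\Gamma=t_\Gamma$ and $T_{V(T)\setminus\Gamma}=B_+^a\big(t_{V(t)\setminus\Gamma}\big)$, while if $a\in\Gamma$ the roles are swapped, writing $\tilde\Gamma:=\Gamma\setminus\{a\}$. One then matches, clause by clause, the three admissibility conditions for $\Gamma$ in $T$ with those for $\tilde\Gamma$ in $t$, using that grafting a fertility-$1$ root neither changes the induced tree higher up nor alters any fertility of a vertex of $t$. This produces exactly the two announced families $\Gamma=\tilde\Gamma$ and $\Gamma=\{a\}\cup\tilde\Gamma$. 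For the contracted fertility I examine $\text{cont}_\Gamma(T)$ near $a$: since $a$ has a single child it is either absorbed into the monochromatic subtree it belongs to or survives as a fertility-$1$ internal vertex of $\text{cont}_\Gamma(T)$; in either case it contributes a trivial factor to the product defining $c_\Gamma$, whence $c_\Gamma=c_{\tilde\Gamma}$.

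For item (2), with $T=B_+^a(T_1\cdots T_n)$ and $\text{fert}(a)=n\geq2$, the fertility clause at $a$ now does the real work. Whichever side contains $a$ must preserve $\text{fert}(a)=n$ in its induced tree, so every one of the $n$ branches must meet that side; simultaneously the opposite side has to be a single rooted tree, hence supported below one root inside a single branch. Together these force the configuration: one distinguished branch $T_i$ carries a proper admissible family $\Gamma_i\neq V(T_i)$ and all remaining branches lie wholly on the $a$-side. Spelling this out in the cases $a\notin\Gamma$ and $a\in\Gamma$ gives precisely $\Gamma=\Gamma_i$ and $\Gamma=\{a\}\cup\Gamma_i\cup V(T_1\cdots\widehat{T_i}\cdots T_n)$. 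For $c_\Gamma$ the point is that in $\text{cont}_\Gamma(T)$ the root $a$ now survives as an internal vertex of fertility exactly $n$: the $n-1$ monochromatic branches each contract to one leaf-child of $a$, and the active branch $T_i$ supplies one further child whose subtree reproduces $\text{cont}_{\Gamma_i}(T_i)$. Since attaching $a$ below the root of $\text{cont}_{\Gamma_i}(T_i)$ changes no fertility there, the product of internal fertilities factors as $n\cdot c_{\Gamma_i}$, i.e. $c_\Gamma=n\,c_{\Gamma_i}$.

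The main obstacle I anticipate is the forcing argument in item (2): one must show that the two fertility clauses at the branching root $a$, combined with the two ``both sides are trees'' clauses, leave no freedom beyond the choice of the active branch $T_i$ and of $\Gamma_i$ on it — in particular ruling out any splitting of $a$'s children across the two sides. A genuine secondary subtlety, requiring care already in item (1), is that a fertility-$1$ root may \emph{raise} its fertility in an induced tree without violating any clause, so the ``both parts are trees'' equivalence between $T$ and $t$ must be verified rather than assumed. Finally I would isolate the trivial families $\emptyset$ and $V(T)$ at the outset (for which $\text{cont}_\Gamma(T)=\tun$ and $c_\Gamma=1$), and use the constraint $\Gamma_i\neq V(T_i)$ exactly to keep the recursion $c_\Gamma=n\,c_{\Gamma_i}$ from being applied where it should not be.
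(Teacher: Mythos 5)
Your plan follows the same route as the paper's own proof: no outer induction, a case split on whether the grafted root lies in $\Gamma$, a clause-by-clause transfer of admissibility one level down, and the identification of $\text{cont}_\Gamma(T)$ as $\text{cont}_{\tilde\Gamma}(t)$ or $B_+\big(\text{cont}_{\tilde\Gamma}(t)\big)$ in item (1), respectively as $B_+\big(\tun\cdots\tun~\text{cont}_{\Gamma_i}(T_i)\big)$ in item (2), whence $c_\Gamma=c_{\tilde\Gamma}$ and $c_\Gamma=n\,c_{\Gamma_i}$. The paper carries out exactly this plan, only more tersely: it asserts the item-(1) equivalence ``by the definition of admissible families''.

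However, the ``secondary subtlety'' you flag in item (1) is not a verification that can be completed: under the literal reading of Definition \ref{def:adm_family}, where the fertility clauses are triggered by $\text{fert}(v)\geq 2$ computed \emph{in $T$}, the equivalence you want is false, and so is the Lemma. Take $t=B_+^{w}\big(B_+^{c}(\tdun{m})\,B_+^{d}(\tdun{p})\big)$, $T=B_+^a(t)$ and $\Gamma=\{w,c,d\}$. Both $T_\Gamma=B_+^w(\tdun{c}\,\tdun{d})$ and $T_{V(T)\setminus\Gamma}=B_+^a(\tdun{m}\,\tdun{p})$ are rooted trees, and the only vertex with $\text{fert}_T\geq2$, namely $w$, keeps fertility $2$ in $T_\Gamma$; the root $a$ has $\text{fert}_T(a)=1$, hence is unconstrained by the clauses even though its fertility in $T_{V(T)\setminus\Gamma}$ is $2$. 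So $\Gamma$ passes all three conditions, yet $\Gamma\cap V(t)=\Gamma$ is not admissible in $t$, since $t_{V(t)\setminus\Gamma}=\tdun{m}\,\tdun{p}$ is disconnected. Note that it is the definition that must be tightened, not the Lemma weakened: a direct computation shows $B_+^a(\tdun{m}\,\tdun{p})\shuffle^T B_+^w(\tdun{c}\,\tdun{d})$ contains only trees whose root has two children, so this $\Gamma$ must not contribute to $\Delta^*(T)$, and Theorem \ref{thm:coprod_adm} would fail if it did. The reading under which your clause-matching (and the paper's argument, which silently uses it) goes through is: fertility must be preserved whenever it is at least $2$ in \emph{either} $T$ \emph{or} the induced tree; equivalently, the only permitted change is a fertility-$1$ vertex of $T$ becoming a leaf of its part. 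Your proposal needs to make this explicit; as written it stalls at exactly the point you identified. A last, smaller slip you share with the paper: in item (2) your own forcing argument (``every branch must meet the $a$-side'') yields $\Gamma_i\neq\emptyset$, not $\Gamma_i\neq V(T_i)$, as the side condition for the families containing the root; with the condition as stated, the list wrongly includes $\{a\}\cup V(T_1\cdots\widehat{T_i}\cdots T_n)$, whose root loses one unit of fertility in $T_\Gamma$, it misses the admissible family $V(T)$, and the formula $c_\Gamma=n\,c_{\Gamma_i}$ also fails for $\Gamma=\Gamma_i=\emptyset$.
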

 \begin{proof}
  \begin{enumerate}
   \item By the definition of admissible families, we have that for a tree $T=B_+^a(t)$ with $t\in\calT_\Omega$, $\Gamma\subseteq V(T)$ is an admissible subfamily of $T$ if, and only if, $\Gamma\cap V(t)$ is an admissible family of $t$. Again by the definition of admissible families and since $t$ is a tree, we have that admissible families of $T$ might contain the root or not. In the former case, we find that admissible families of $T$ are exactly the sets $\{a\}\cup\tilde\Gamma$ with $\tilde\Gamma$ an admissible family of $t$ and in the later case that admissible families of $T$ are exactly the sets $\tilde\Gamma$, again with $\tilde\Gamma$ an admissible family of $t$.
   
   In both of these cases, we see that the contraction of $T$ with respect to $\Gamma$ cont$_\Gamma(T)$ is either cont$_{\tilde\Gamma}(t)$ or $B_+($cont$_{\tilde\Gamma}(t))$. In both cases we have $c_\Gamma=c_{\tilde\Gamma}$ and thus the first point of the proof holds.
   
   \item For a tree $T=B_+^a(T_1\cdots T_n)$ with $n\geq2$ the two cases have to be considered separately. First, let $\Gamma\subseteq V(T)$ be an admissible  family of $T$ which does not contains the root of $T$. Then since $T_\Gamma$ must be a rooted tree, we have that $\Gamma$ must contains vertices from at most one the the $T_i$. Thus in this case, $\Gamma$ is an admissible family of $T$ if, and only if, it is an admissible family of $T_i$ which does not contain all the vertices of $T_i$ (or otherwise the root has fertility $n-1$ in $T_{V(T)\setminus\Gamma}$). In this case have $\Gamma=\Gamma_i$ as claimed.
   
   For the case where $\Gamma\subseteq V(T)$ is an admissible  family of $T$ which does contains the root of $T$, we can conclude by observing that, according to the definition of admissible families, if $\Gamma$ is an admissible family, then so is $V(T)\setminus\Gamma$. We can also observe directly that for $T_{V(T)\setminus\Gamma}$ to be a rooted tree, $V(T)\setminus\Gamma$ must contains vertices from at most one of the $T_i$ since it does not contain the root, and that it must not contain all vertices of $T_i$ otherwise the root of $T$ has fertility one less in $T_{\Gamma}$ than in $T$. Thus we have in this case exactly $\Gamma=\{a\}\cup\Gamma_i\cup V(T_1\cdots\widehat{T_i}\cdots T_n)$ with $\Gamma_i$ an admissible family of $T_i$ which does not contain all the vertices of $T_i$.
   
   In both of these cases, to obtain $\text{cont}_\Gamma(T)$ we must contract each of the $T_j$ for $j\neq i$, in the first case as subset of $V(T)\setminus \Gamma$ and in the second case as subset of $\Gamma$. Thus we have
   \begin{equation*}
    \text{cont}_\Gamma(T)=B_+\left(\underbrace{\tun\cdots~\tun}_{n-1\text{ times}}\text{cont}_{\Gamma_i}(T_i)\right)
   \end{equation*}
   and therefore $c_\Gamma=nc_{\Gamma_i}$ as claimed.
  \end{enumerate}
 \end{proof}
 We can now state and prove a combinatorial formula for the dual coproduct.
 \begin{thm} \label{thm:coprod_adm}
  The coproduct $\Delta^*:\calF_\Omega^\circ\longrightarrow\calF_\Omega^\circ\otimes\calF_\Omega^\circ$ dual to the shuffle of rooted forests $\shuffle^T$ admits the following combinatorial description:
$$\Delta^*(T) = \sum_{\substack{ \Gamma\subseteq V(T) \text{ admissible}}} c_\Gamma  \; T_{V(T)\setminus\Gamma} \otimes T_{\Gamma}$$
for any rooted tree $T\in\calT_\Omega^\circ$ and for any $F=T_1\cdots T_n$ with $n\geq2$
$$\Delta^*(F) = F\otimes \emptyset + \emptyset\otimes F + \sum_{i= 1}^{n} \quad \sum_{\substack{\Gamma_i \subseteq V(T_i) \text{ admissible} \\ \emptyset \neq \Gamma_i \neq V(T_i)}} c_{\Gamma_i} 
\quad
\sum_{I\subseteq [n]_i}\alpha_{I,n} \;(T_i)_{V(T_i)\setminus\Gamma_i} T_I \otimes (T_i)_{\Gamma_i}T_{[n]_i\setminus I}$$
 with once again $\alpha_{I,n} := (|I|+1)(n - |I|)$.
 \end{thm}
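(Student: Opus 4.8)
The plan is to prove both formulas simultaneously by strong induction on the total number of vertices, using as the two engines the inductive description of $\Delta^*$ from Theorem \ref{thm:coprod_dual_ind} and the inductive structure of admissible families from Lemma \ref{lem:adm_fam_ind}. The first observation is that the forest formula is a purely formal consequence of the tree formula: by the tree formula applied to each $T_i$ (legitimate in the induction since $|T_i|<|F|$), removing the two extreme terms $T_i\otimes\emptyset$ and $\emptyset\otimes T_i$ gives $\tilde{\Delta}^*(T_i)=\sum_{\emptyset\neq\Gamma_i\neq V(T_i)}c_{\Gamma_i}\,(T_i)_{V(T_i)\setminus\Gamma_i}\otimes (T_i)_{\Gamma_i}$. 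Substituting this into Equation \eqref{eq:Delta_dual_forest} and pushing the tensor factors through $\tau_{23}$ and the concatenations $\sqcup$ reproduces the claimed expression for $\Delta^*(F)$, once one identifies $F_I=T_I$. Thus it suffices to establish the tree formula.

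For the tree formula the base case $T=\tun$ is immediate, the two admissible families $\emptyset$ and $V(T)$ yielding exactly $T\otimes\emptyset+\emptyset\otimes T$. For the inductive step I would write $T=B_+^a(F)$ and apply \eqref{coproddualtree}, splitting according to Lemma \ref{lem:adm_fam_ind}. When $F=t$ is a tree, I would feed the inductive tree formula for $t$ into \eqref{coproddualtree}; because admissibility of $\tilde\Gamma$ forces both $t_{\tilde\Gamma}$ and $t_{V(t)\setminus\tilde\Gamma}$ to be trees, the projection $\Pi_{\mathcal{T}_\Omega}$ acts as the identity on every term. Hence $(B_+^a\otimes Id)(Id\otimes\Pi_{\mathcal{T}_\Omega})$ generates exactly the families $\Gamma=\tilde\Gamma$ (root outside $\Gamma$) and $(Id\otimes B_+^a)(\Pi_{\mathcal{T}_\Omega}\otimes Id)$ the families $\Gamma=\{a\}\cup\tilde\Gamma$ (root inside $\Gamma$), with coefficient $c_\Gamma=c_{\tilde\Gamma}$ in both, matching Lemma \ref{lem:adm_fam_ind}(1). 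The small fact to record is that grafting carries induced subtrees to induced subtrees, namely $T_{V(T)\setminus\tilde\Gamma}=B_+^a(t_{V(t)\setminus\tilde\Gamma})$ and $T_{\{a\}\cup\tilde\Gamma}=B_+^a(t_{\tilde\Gamma})$, which holds because $a$ is the minimum of $V(T)$.

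The substantial case is $F=T_1\cdots T_n$ non-connected with $n\geq2$, where I would insert into \eqref{coproddualtree} the forest formula for $F$ (available by the reduction above). The mechanism is that $\Pi_{\mathcal{T}_\Omega}$ annihilates every non-connected tensor factor: in $(B_+^a\otimes Id)(Id\otimes\Pi_{\mathcal{T}_\Omega})$ only summands whose right factor is a single tree survive, forcing $[n]_i\setminus I=\emptyset$, i.e.\ $I=[n]_i$; symmetrically the second operator keeps only $I=\emptyset$. In both surviving cases the coefficient collapses, $\alpha_{[n]_i,n}=\alpha_{\emptyset,n}=n$, and reassembling under $B_+^a$ yields $T_{V(T)\setminus\Gamma}=B_+^a((T_i)_{V(T_i)\setminus\Gamma_i}T_{[n]_i})$ for $\Gamma=\Gamma_i$, respectively $T_\Gamma=B_+^a((T_i)_{\Gamma_i}T_{[n]_i})$ for $\Gamma=\{a\}\cup\Gamma_i\cup V(T_1\cdots\widehat{T_i}\cdots T_n)$; the resulting factor $n$ is exactly the $c_\Gamma=n\,c_{\Gamma_i}$ of Lemma \ref{lem:adm_fam_ind}(2). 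The two extreme terms $F\otimes\emptyset$ and $\emptyset\otimes F$ of the forest formula provide, after projection, precisely the families $\Gamma=\emptyset$ and $\Gamma=V(T)$ with coefficient $1$, the opposite term being killed each time since $F$ is non-connected.

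I expect the main obstacle to be the bookkeeping of this last case rather than any conceptual point. One must check that the projection selects exactly $I\in\{\emptyset,[n]_i\}$, that the degenerate families $\emptyset$ and $V(T)$ are produced once and with coefficient $1$ (and not $n$) — which is why they must be traced to the extreme terms of the forest formula rather than to a $\Gamma_i=\emptyset$ summand, so that Lemma \ref{lem:adm_fam_ind}(2) is only ever invoked for $\emptyset\neq\Gamma_i\neq V(T_i)$ — and that the induced-tree/grafting identities above hold, which again rests on $a$ being the root and on admissibility ensuring the induced pieces are genuine trees with a single root onto which $B_+^a$ grafts. Once these routine verifications are done, summing the two operators reconstitutes $\sum_{\Gamma\text{ admissible}}c_\Gamma\,T_{V(T)\setminus\Gamma}\otimes T_\Gamma$, closing the induction.
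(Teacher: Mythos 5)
Your proposal is correct and takes essentially the same route as the paper's own proof: an induction on the number of vertices that feeds the inductive formulas \eqref{eq:Delta_dual_forest} and \eqref{coproddualtree} of Theorem \ref{thm:coprod_dual_ind} into the structure of admissible families given by Lemma \ref{lem:adm_fam_ind}, with the projection $\Pi_{\mathcal{T}_\Omega}$ killing non-connected factors so that only $I\in\{\emptyset,[n]_i\}$ survives and the coefficients collapse as $\alpha_{\emptyset,n}=\alpha_{[n]_i,n}=n$, matching $c_\Gamma=n\,c_{\Gamma_i}$. The only difference is organizational and immaterial: you extract the forest formula from the tree formula as an up-front reduction, whereas the paper records exactly this deduction at the end of the same induction.
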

 \begin{proof}
  We prove this result by induction on $N=|F|$. If $N=0$, the result trivially holds since $\emptyset\subseteq V(\emptyset)$ is the unique admissible family of the empty tree. \\
  
  For $N\in\N$, let us assume that the Theorem holds for all forests with $F$ with $|F|\leq N$ and let $T=B_+^a(F)$ be a rooted tree of $N+1$ vertices. Then by Equation \eqref{coproddualtree} of Theorem \ref{thm:coprod_dual_ind} we have 
  \begin{equation*}
   \Delta^*(T) = \underbrace{(B_+^a\otimes Id)(Id\otimes \Pi_{\mathcal{T}_\Omega})\Delta^*(F)}_{=:A} + \underbrace{(Id\otimes B_+^a)(\Pi_{\mathcal{T}_\Omega}\otimes Id)\Delta^*(F)}_{=:B}.
  \end{equation*}
  Then we have two cases to consider:
  \begin{enumerate}
   \item If $F=t\in\calT_\Omega^\circ$ then $|t|=N$ and we can use the induction hypothesis to obtain $\Delta^*(t)$. Since for any $\tilde\Gamma$ admissible family of $t$, $t_{V(t)\setminus\Gamma}$ and $t_\Gamma$ are rooted trees we have
   \begin{equation*}
    A = (B_+^a\otimes Id)\sum_{\substack{\tilde\Gamma\subseteq V(t) \\ \text{ admissible}}} c_{\tilde\Gamma}  \; t_{V(t)\setminus\tilde\Gamma} \otimes t_{\tilde\Gamma} = \sum_{\substack{\tilde\Gamma\subseteq V(t) \\\text{ admissible}}} c_{\tilde\Gamma}  \; B_+^a\left(t_{V(t)\setminus\tilde\Gamma}\right) \otimes t_{\tilde\Gamma}.
   \end{equation*}
   Using the first point of Lemma \ref{lem:adm_fam_ind} we see that the sum runs precisely on admissible families of $T$ that do not contain the root. We have 
   \begin{equation*}
    A =  \sum_{\substack{\Gamma\subseteq V(T) \text{ admissible}  \\ a\notin \Gamma}} c_{\Gamma}  \; B_+^a\left(t_{V(t)\setminus\Gamma}\right) \otimes t_{\Gamma} =  \sum_{\substack{\Gamma\subseteq V(T) \text{ admissible}  \\ a\notin \Gamma}} c_{\Gamma}  \; T_{V(T)\setminus\Gamma} \otimes T_{\Gamma}.
   \end{equation*}
   For the last equality, we have used that since $\Gamma\subseteq V(t)\Leftrightarrow a\notin\Gamma$ we have $t_\Gamma=T_\Gamma$ and $B_+^a\left(t_{V(t)\setminus\Gamma}\right)=T_{V(T)\setminus\Gamma}$.
   
   Using exactly the same argument we obtain 
   \begin{equation*}
    B =  \sum_{\substack{\Gamma\subseteq V(T) \text{ admissible}  \\ a\in \Gamma}} c_{\Gamma}  \; T_{V(T)\setminus\Gamma} \otimes T_{\Gamma}.
   \end{equation*}
   Therefore 
   \begin{equation*}
    \Delta^*(T) = \sum_{\substack{ \Gamma\subseteq V(T) \text{ admissible}}} c_\Gamma  \; T_{V(T)\setminus\Gamma} \otimes T_{\Gamma}
   \end{equation*}
   as claimed and we have proved the formula for $\Delta^*(T)$ at rank $|T|=N+1$ in the case $T=B_+^a(t)$.
   
   \item Now if $F=T_1\cdots T_n\in\calF_\Omega^\circ\setminus\calT_\Omega^\circ$, using the inductive formula \eqref{coproddualtree} of Theorem \ref{thm:coprod_dual_ind} and the induction hypothesis we find 
   \begin{align*}
    & A=(B_+^a\otimes Id)(Id\otimes \Pi_{\mathcal{T}_\Omega}) \\
    & \left(
    T_1\cdots T_n\otimes \emptyset + \emptyset\otimes T_1\cdots T_n + \sum_{i= 1}^{n} \sum_{\substack{\Gamma_i \subseteq V(T_i)\\ \text{ admissible} \\ \emptyset \neq \Gamma_i \neq V(T_i)}} c_{\Gamma_i} 
\sum_{I\subseteq [n]_i}\alpha_{I,n} \;(T_i)_{V(T_i)\setminus\Gamma_i} T_I \otimes (T_i)_{\Gamma_i}T_{[n]_i\setminus I}
\right)
   \end{align*}
and 
   \begin{align*}
    & B=(Id\otimes B_+^a)(\Pi_{\mathcal{T}_\Omega}\otimes Id) \\
    & \left(
    T_1\cdots T_n\otimes \emptyset + \emptyset\otimes T_1\cdots T_n + \sum_{i= 1}^{n}  \sum_{\substack{\Gamma_i \subseteq V(T_i)\\ \text{ admissible} \\ \emptyset \neq \Gamma_i \neq V(T_i)}} c_{\Gamma_i}
\sum_{I\subseteq [n]_i}\alpha_{I,n} \;(T_i)_{V(T_i)\setminus\Gamma_i} T_I \otimes (T_i)_{\Gamma_i}T_{[n]_i\setminus I}
\right).
   \end{align*}
   Since by hypothesis $T_1\cdots T_n$ is not a tree, the only on-vanishing terms in $A$ are $T_1\cdots T_n\otimes \emptyset$ and $I=[n]_i$. This implies $\alpha_{I,n}=n$ and we obtain
   \begin{equation*}
    A = T\otimes \emptyset + \sum_{i=1}^n\sum_{\substack{\Gamma_i \subseteq V(T_i)\\ \text{ admissible} \\ \emptyset \neq \Gamma_i \neq V(T_i)}} nc_{\Gamma_i}B_+^a\left((T_i)_{V(T_i)\setminus\Gamma_i}T_1\cdots \widehat{T_i}\cdots T_n\right) \otimes (T_i)_{\Gamma_i}
   \end{equation*}
   Using this time the second point of Lemma \ref{lem:adm_fam_ind} we find that the two sums are precisely summing over non-empty admissible families of $T$ that do not contain the root of $T$. Together with the term $T\otimes\emptyset$ we obtain
   \begin{equation*}
    A =  \sum_{\substack{\Gamma\subseteq V(T)\\ \text{ admissible}  \\ a\notin \Gamma}} c_{\Gamma}  B_+^a\left((T_i)_{V(T_i)\setminus\Gamma_i}T_1\cdots \widehat{T_i}\cdots T_n\right) \otimes (T_i)_{\Gamma} 
    =  \sum_{\substack{\Gamma\subseteq V(T)\\ \text{ admissible}  \\ a\notin \Gamma}} c_{\Gamma}   T_{V(T)\setminus\Gamma} \otimes T_{\Gamma}.
   \end{equation*}
   For the last equality, we have used as before that since $\Gamma\subseteq V(T_i)$ we have $(T_i)_\Gamma=T_\Gamma$ and $B_+^a\left((T_i)_{V(T_i)\setminus\Gamma_i}T_1\cdots \widehat{T_i}\cdots T_n\right)=T_{V(T)\setminus\Gamma}$.
   
   Using exactly the same argument we obtain 
   \begin{equation*}
    B =  \sum_{\substack{\Gamma\subseteq V(T) \text{ admissible}  \\ a\in \Gamma}} c_{\Gamma}  \; T_{V(T)\setminus\Gamma} \otimes T_{\Gamma}.
   \end{equation*}
   Therefore 
   \begin{equation*}
    \Delta^*(T) = \sum_{\substack{ \Gamma\subseteq V(T) \text{ admissible}}} c_\Gamma  \; T_{V(T)\setminus\Gamma} \otimes T_{\Gamma}
   \end{equation*}
   as claimed and we have proved the formula for $\Delta^*(T)$ at rank $|T|=N+1$ in the case $T=B_+^a(T_1\cdots T_n)$.
  \end{enumerate}
  This covers the two possible cases and we have proven the formula for $\Delta^*(T)$. \\
  
  We conclude the proof by noticing that for $F$ a non-connected rooted forest with $N+1$ vertices, the formula for $\Delta^*(F)$ follows directly from the formula for $\Delta^*(T)$ and the inductive formula of Theorem \ref{thm:coprod_dual_ind} for $\Delta^*(F)$. Therefore the Theorem holds for trees and forests with $N+1$ vertices and, by induction, holds for every trees and forests.
 \end{proof}

 \subsection{Dual bialgebra}
 
 We now endow $\calT_\Omega^\circ=\bigoplus_{n=0}^{+\infty}(\calT_\Omega)_n$ (where $(\calT_\Omega)_n$) is the vector space freely generated by rooted trees with $n$ vertices) with a bialgebra structure. Recall that for two trees $T$ and $T'$ and $v\in V(T)$, $T \underset{v}{\curvearrowleft} T'$ is the tree obtained by grafting the root of $T'$ as a new direct descendant of $v$.
 \begin{defi} \label{def:GL_like_products}
  For any $T$ and $T'$ in $\calT_\Omega^\circ$ set $\emptyset\lhd T'=T'$ if $T=\emptyset$ and, if $T\neq\emptyset$, set
  \begin{equation*}
   T\lhd T':=\sum_{l\in\ell(T)} T \underset{l}{\curvearrowleft} T'
  \end{equation*}
  where the sum runs over the set of leaves of $T$. We further define
  \begin{equation*}
    T \unlhd T' = \begin{cases}
        0, \quad \text{if} \quad |\ell(T)| > 1\\
        T \lhd T', \quad \text{otherwise.}
    \end{cases}
\end{equation*}
These two maps are then extended by linearaity to product $\lhd,\unlhd:\calT_\Omega^\circ\otimes\calT_\Omega^\circ\longrightarrow\calT_\Omega^\circ$.
 \end{defi}
 These products are non commutative and non associative. However, they are two restrictions of the Grossman-Larson product introduced in \cite{GL89,GL90,GL05} so it should be no surprise that they are pre-Lie products. Since this result is just in passing, we simply refer the reader to \cite{gerstenhaber1963cohomology} (although they were introduced earlier under a different name in \cite{vinberg1963theory}) for the definition of pre-Lie algebras, but they will be defined in the proof below.
 \begin{Prop} \label{prop:pre_Lie}
     $(\mathcal{T}^\circ_\Omega, \unlhd)$ is a right and a left pre-Lie algebra and $(\mathcal{T}^\circ_\Omega, \lhd)$ is a left pre-Lie algebra.
\end{Prop}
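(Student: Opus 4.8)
The plan is to compute, for each of the two products, the \emph{associator} $a_\star(x,y,z):=(x\star y)\star z-x\star(y\star z)$ and to read off its symmetries. Recall that $(\calT_\Omega^\circ,\star)$ is a left pre-Lie algebra when $a_\star$ is symmetric in its last two arguments, i.e.
\[
(x\star y)\star z-x\star(y\star z)=(x\star z)\star y-x\star(z\star y),
\]
and a right pre-Lie algebra when $a_\star$ is symmetric in its first two arguments. I will treat $\lhd$ first, then deduce the statement for $\unlhd$ from the stronger fact that $\unlhd$ is associative.

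For $\lhd$, first I would expand $(T_1\lhd T_2)\lhd T_3$. Writing $T_1\lhd T_2=\sum_{l\in\ell(T_1)}T_1\underset{l}{\curvearrowleft}T_2$ and then applying $\lhd\,T_3$, the crucial point is that the leaves of $T_1\underset{l}{\curvearrowleft}T_2$ split into two disjoint families: the leaves of $T_1$ other than $l$, and the leaves of $T_2$ (a vertex of the grafted copy of $T_2$ is a leaf of the whole tree exactly when it was a leaf of $T_2$, since grafting only attaches the root of $T_2$ below $l$). The terms in which $T_3$ is grafted onto a leaf of $T_2$ recombine precisely into $T_1\lhd(T_2\lhd T_3)$, because grafting $T_2$ at $l$ and then $T_3$ at a leaf $m$ of $T_2$ produces the same tree as grafting $T_2\underset{m}{\curvearrowleft}T_3$ at $l$. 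After this cancellation I am left with
\[
a_\lhd(T_1,T_2,T_3)=\sum_{\substack{l,m\in\ell(T_1)\\ l\neq m}}\bigl(T_1\text{ with }T_2\text{ grafted at }l\text{ and }T_3\text{ grafted at }m\bigr).
\]
Since this sum runs over \emph{ordered} pairs of distinct leaves of $T_1$, relabelling $(l,m)\mapsto(m,l)$ shows it is invariant under exchanging $T_2$ and $T_3$, which is exactly the left pre-Lie identity. It is not invariant under exchanging $T_1$ and $T_2$ (taking $T_1$ to be a root carrying two leaves and $T_2,T_3$ reduced to a single vertex already gives a nonzero associator against a vanishing one), so $\lhd$ is not right pre-Lie.

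For $\unlhd$ I would argue that the product is associative, which makes $a_\unlhd\equiv0$ and hence yields both pre-Lie identities at once. The key observations are: $A\unlhd B\neq0$ forces $|\ell(A)|\leq1$, i.e. $A$ is empty or a linear tree; in that case $A\unlhd B=A\lhd B$ stacks $B$ below the unique leaf of $A$ (with $\emptyset$ acting as a left unit); and for nonempty $B$ the resulting tree has exactly $|\ell(B)|$ leaves, so it is again linear precisely when $B$ is. Feeding this into both bracketings shows that $(T_1\unlhd T_2)\unlhd T_3$ and $T_1\unlhd(T_2\unlhd T_3)$ both equal the tree obtained by stacking $T_3$ under $T_2$ under $T_1$ when $T_1$ and $T_2$ are linear, and vanish otherwise; in particular they coincide. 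Thus $\unlhd$ is associative, hence trivially both left and right pre-Lie.

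The main obstacle is the leaf bookkeeping for $\lhd$: one must justify rigorously that, inside $T_1\underset{l}{\curvearrowleft}T_2$, the leaf set is the claimed disjoint union, and that the ``$T_2$-internal'' terms reassemble exactly as $T_1\lhd(T_2\lhd T_3)$. A secondary point is the role of $\emptyset$, which is only a left unit: one has $T\lhd\emptyset=|\ell(T)|\,T$ for $T\neq\emptyset$, so the boundary cases are not literally covered by unitality and must be checked against the leaf-pair formula. They are, since grafting $\emptyset$ acts as the identity on trees, and the formula degenerates correctly.
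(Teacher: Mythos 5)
Your proposal is correct, and it splits into two distinct comparisons with the paper. For $\lhd$, your argument is essentially the paper's own: the paper likewise expands $(t_1\lhd t_2)\lhd t_3$, observes that the terms in which $t_3$ lands on a leaf of the grafted copy of $t_2$ reassemble exactly into $t_1\lhd(t_2\lhd t_3)$, and concludes that the associator consists only of trees with $t_2$ and $t_3$ grafted at distinct leaves of $t_1$, which is manifestly symmetric under $t_2\leftrightarrow t_3$; your version is simply a more explicit rendering of that graphical argument (the sum over ordered pairs of distinct leaves of $t_1$), together with the empty-tree boundary checks ($T\lhd\emptyset=|\ell(T)|\,T$, $\emptyset$ a left unit only) that the paper leaves implicit. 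For $\unlhd$, however, you take a genuinely different route: you prove the stronger statement that $\unlhd$ is associative on all of $\calT_\Omega^\circ$, from which both the left and the right pre-Lie identities follow trivially, whereas the paper verifies the right pre-Lie identity by a four-case analysis on the number of leaves of $t_1,t_2,t_3$ (noting associativity only in the all-linear case, where $\unlhd$ is concatenation of linear trees) and then asserts that the left identity is proved ``in the exact same way''. Your organization is an improvement: the paper's cases in fact amount to showing that both bracketings vanish or that they agree, i.e. they implicitly establish associativity without isolating it, and stating associativity outright yields both halves of the claim about $\unlhd$ at once. Your key observations ($A\unlhd B\neq0$ forces $A$ empty or linear, and then $A\unlhd B$ is again linear precisely when $B$ is) are exactly what is needed, and they hold; your side remark that $\lhd$ is not right pre-Lie, while not required by the statement, is also correct.
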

\begin{proof}
 We prove the two results separately. \\
 
    For $\unlhd$, let $t_1,t_2,t_3 \in \mathcal{T}_\Omega^\circ.$ We first prove that it is a right pre-Lie algebra, i.e. we want to prove that for any trees $t_1$, $t_2$ and $t_3$ we have:

    $$(t_1\unlhd t_2)\unlhd t_3 - t_1\unlhd (t_2\unlhd t_3) = (t_2\unlhd t_1)\unlhd t_3 - t_2\unlhd (t_1\unlhd t_3)$$
    We have four cases to consider.
    \begin{itemize}
        \item If $|\ell(t_1) = |\ell(t_2)| = |\ell(t_3)| = 1,$ then the product $\unlhd$ reduces to the concatenation of words. Thus it is associative and therefore pre-Lie.

        \item If $|\ell(t_1)| > 1$, then 
    $$t_1 \unlhd t_2 = t_1 \unlhd t_3 = t_1\unlhd (t_2 \unlhd t_3) = 0$$
    and $|\ell(t_2 \unlhd t_1)| \geq |\ell(t_1)|$ therefore $$(t_2 \unlhd t_1)\unlhd t_3 = 0$$
    and the result follows.
    \item The case $|\ell(t_2)| > 1$ is similar to the previous one.

    \item The last case $l(t_3) > 1$ follows from the fact that 
    $$(t_1\unlhd t_2)\unlhd t_3 \; \text{and} \; t_1\unlhd (t_2\unlhd t_3)$$
    are the trees obtained by grafting $t_3$ on top of $t_2$ and $t_2$ on top of $t_1$ and hence are equal. The same argument follows for the other side of the pre-Lie identity.
    \end{itemize}
    The fact that $\unlhd$ is also a left pre-Lie algebra is proven in the exact same way and we omit the details here.
    
    For $\lhd$, we want to show $(t_1\lhd t_2)\lhd t_3 - t_1\lhd (t_2\lhd t_3) = (t_1\lhd t_3)\lhd t_2 - t_1\lhd (t_3\lhd t_2)$. This can be checked through a tedious verification of the arguments in the sums of each sides, but we give instead a simpler, more graphical argument. Notice that this is the usual argument to show that the Grossman-Larson product is pre-Lie.
    
    First, observe that $(t_1\lhd t_2)\lhd t_3$ is a linear combination of two types of trees. Trees where $t_3$ is grafted on a leaf of $t_2$ which is itself grafted on a leaf of $t_1$; and trees where $t_3$ and $t_2$ are both grafted on leafs of $t_1$. But $t_1\lhd (t_2\lhd t_3)$ contains precisely the trees where trees where $t_3$ is grafted on a leaf of $t_2$ which is itself grafted on a leaf of $t_1$. Thus $(t_1\lhd t_2)\lhd t_3 - t_1\lhd (t_2\lhd t_3)$ contains only the trees where $t_3$ and $t_2$ are both grafted on leafs of $t_1$.
    
    On the other hand, with the same argument but exchanging $t_3$ and $t_2$ we see that $(t_1\lhd t_3)\lhd t_2 - t_1\lhd (t_3\lhd t_2)$ contains also the trees where $t_3$ and $t_2$ are both grafted on leafs of $t_1$. Therefore the two sides of the equality are equal and $\lhd$ is left pre-Lie.

\end{proof}
The next result is the main one of this subsection. It explains why the coproduct of Definition \ref{def:shuffle_coprod} is canonical, and it also implies that it is co-right pre-Lie.
\begin{thm} \label{thm:dual_bialg}
 $(\calT_\Omega^\circ,\unlhd,\Delta^*)$ is the bialgebra dual to $(\mathcal{T}_\Omega,\shuffle^T,\Delta)$ as bialgebra.
\end{thm}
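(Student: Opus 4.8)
The plan is to invoke the duality principle recalled at the beginning of this section (\cite{sweedler1969hopf}): for a graded bialgebra with finite-dimensional homogeneous components, its graded dual is again a bialgebra, whose product is the transpose of the coproduct and whose coproduct is the transpose of the product. By Theorem \ref{thm:shuffle_bialg} (with $\lambda=0$), $(\calT_\Omega,\shuffle^T,\Delta)$ is such a bialgebra, and by construction its dual coproduct is $\Delta^*$, the transpose of $\shuffle^T$. Hence the statement reduces to the single identity that the product transpose to the shuffle coproduct $\Delta$ is exactly $\unlhd$; equivalently, under the pairing $\langle T,T'\rangle=\delta_{T,T'}$, that
\begin{equation*}
 \langle T_1\unlhd T_2,\,T\rangle=\langle T_1\otimes T_2,\,\Delta(T)\rangle
\end{equation*}
for all rooted trees $T,T_1,T_2$.

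To evaluate the right-hand side I would use the graphical reading of $\Delta$ given after Definition \ref{def:shuffle_coprod}, itself a short induction on $|T|$ from \eqref{eq:def_coprod_shuffle}: writing $\Delta(T)=\sum_{(T)}T'\otimes T''$, each lower factor $T'$ is a prefix of the trunk of $T$ and is therefore a \emph{ladder} (a tree with a single leaf), each term carries coefficient $1$, and distinct trunk cuts yield lower factors of distinct sizes. Thus $\langle T_1\otimes T_2,\Delta(T)\rangle$ equals $1$ when $T_1$ is a ladder and $T$ is obtained by grafting the root of $T_2$ onto the unique leaf of $T_1$ (with $T=T_2$ when $T_1=\emptyset$, and the extra top cut accounting for $T_2=\emptyset$), and $0$ otherwise. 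On the left-hand side, Definition \ref{def:GL_like_products} gives $T_1\unlhd T_2=0$ unless $|\ell(T_1)|=1$, i.e. unless $T_1$ is a ladder, in which case $T_1\unlhd T_2=T_1\lhd T_2=T_1\underset{l}{\curvearrowleft}T_2$ is the unique tree obtained by grafting the root of $T_2$ on the unique leaf $l$ of $T_1$. Comparing, $\langle T_1\unlhd T_2,T\rangle$ is governed by exactly the same condition, and the conventions for $\emptyset$ match ($\emptyset\unlhd T_2=T_2$ against the $\emptyset\otimes T$ term, and $T_1\unlhd\emptyset=T_1$ for ladders against the top cut). This proves $\unlhd=\Delta^t$.

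I would then conclude by transport of structure: since $\unlhd$ is transpose to $\Delta$ and $\Delta^*$ is transpose to $\shuffle^T$, the pair $(\unlhd,\Delta^*)$ is the transpose of $(\shuffle^T,\Delta)$, so $(\calT_\Omega^\circ,\unlhd,\Delta^*)$ is, by the duality principle, precisely the dual bialgebra of $(\calT_\Omega,\shuffle^T,\Delta)$. In particular the associativity of $\unlhd$ and its compatibility with $\Delta^*$ follow by transposing the coassociativity of $\Delta$ (Proposition \ref{prop:shuffle_coalg}) and the bialgebra axiom of Theorem \ref{thm:shuffle_bialg}, while the non-coassociativity of $\Delta^*$ mirrors the non-associativity of $\shuffle^T$; dualising the right pre-Lie identity of Proposition \ref{prop:pre_Lie} also yields the announced co-right-pre-Lie property of $\Delta$. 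The only genuine work—and the step I expect to be the main obstacle—is the matching in the previous paragraph: one must verify that the lower legs of $\Delta$ are \emph{exactly} the ladders, with coefficient one, so that they dovetail with the single-leaf vanishing condition built into $\unlhd$. Everything else is formal duality.
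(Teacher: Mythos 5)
Your proposal is correct and takes essentially the same route as the paper: both use Theorem \ref{thm:shuffle_bialg} to reduce the statement to the single pairing identity $\langle T_1\unlhd T_2,T\rangle=\langle T_1\otimes T_2,\Delta(T)\rangle$, and both verify it by the same case analysis on $T_1$ (empty, a ladder, or having at least two leaves), using that the left tensor legs of $\Delta(T)$ are exactly ladders with coefficient one. The paper's proof is precisely this argument written out in the three cases.
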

\begin{proof}
 One can directly check that the coproduct $\Delta^*$ is a morphism for $\unlhd$, i.e. that 
 \begin{equation*}
        \Delta^*(T_1\unlhd T_2) = (\unlhd \otimes \unlhd)\circ \tau_{23}\circ (\Delta^* \otimes \Delta^* (T_1 \otimes T_2))
    \end{equation*}
    for any rooted trees $T_1$ and $T_2$. However, since by Theorem \ref{thm:shuffle_bialg} $(\mathcal{T}_\Omega,\shuffle^T,\Delta)$ is a bialgebra, it is enough to prove that $\unlhd$ is the product dual to the coproduct $\Delta$, i.e. that, for any trees $(T_1,T_2)\in(\calT_\Omega^\circ)^2$ we have 
    \begin{equation} \label{eq:unlhd_bialg}
     \langle T_1\unlhd T_2,T\rangle=\langle T_1\otimes T_2,\Delta(T)\rangle~ \forall T\in\calT_\Omega.
    \end{equation}
    Let us assume that $T_1$, $T_2$ and $T$ are rooted trees. The general cases of $T_1$, $T_2$ and $T$ to be linear combinations is obtained by linearity. We have three cases to consider.
    
    If $T_1=\emptyset$ we have 
    \begin{equation*}
     \langle T_1\unlhd T_2,T\rangle = \langle T_2,T\rangle=\delta_{T_2T}
    \end{equation*}
    with $\delta$ the Kronecker symbol. On the other hand, we then have $\langle T_1\otimes T_2,\Delta(T)\rangle=\langle \emptyset\otimes T_2,\Delta(T)\rangle$. The only possible  non-vanishing term in this expression is $\emptyset\otimes T$. Thus we also obtain 
    \begin{equation*}
     \langle T_1\otimes T_2,\Delta(T)\rangle =\delta_{T_2T}
    \end{equation*}
    and Equation \eqref{eq:unlhd_bialg} holds in the case $T_1=\emptyset$.
    
    If $|\ell(T_1)|=1$ then $T_1$ is a linear tree with $n\in\N^*$ vertices. Then $T_1\unlhd T_2$ is the tree obtained by grafting $T_2$ on the unique leaf of $T_1$. On the other hand,
     in this case, we have by definition of $\Delta$ that $\langle T_1\otimes T_2,\Delta(T)\rangle$ is non-zero if,and only if,  the trunk of $T$ to be of length at least $n$, and the tree $T'$ obtained by removing the first $n$ vertices of the trunk of $T$ to be $T_2$. This is equivalent to saying that we have $T=T_1\underset{l}{\curvearrowleft} T_2=T_1\unlhd T_2$, with $l$ the unique leaf of $T_1$. Therefore, in this case, Equation \eqref{eq:unlhd_bialg} holds as well.
     
     Finally, if $|\ell(T_1)\geq2$ then $\langle T_1\unlhd T_2,T\rangle =0$ for all $T\in\calT_\Omega$. On the other hand, by definition of $\Delta$ (Definition \ref{def:shuffle_coprod}) we have for any $T\in\calT_\Omega$
     \begin{equation*}
      \Delta(T)=\sum_{n=0}^{N_T-1}\ell_n\otimes T''
     \end{equation*}
     with $N_T$ the number of vertices in the trunk of $T$ (thus $N_T\geq1$ since $T$ is a tree) and $\ell_n$ the linear tree with $n$ vertices. The terms in the left of these tensors have always at most one leaf. Thus, if $T_1$ has two or more leaves we have $\langle T_1\otimes T_2,\Delta(T)\rangle=0$ for any $T\in\calT_\Omega$. Thus Equation \eqref{eq:unlhd_bialg} holds also in this case, and the Theorem is proven.

\end{proof}

\subsection{Dual primitive trees}

Let us now caracterize and enumerate the rooted trees that are primitive for the dual coproduct $\Delta^*$, i.e. the trees such that 
\begin{equation*}
 \Delta^*(T)=T\otimes\emptyset+\emptyset\otimes T.
\end{equation*}
\begin{remark}
 Notice that this is not the same as asking what are the primitive elements of $\Delta^*$. Linear combination of non primitive trees could a priori be primitive. Finding all primitive elements is much harder but less interesting for our purpose, since we are in the dual and since it is not clear that we have a Milnor-Moore  theorem (\cite{MM65}, see also \cite{Ca06} and \cite{Pa94}) for non associative Hopf algebras. 
\end{remark}
The caracterization of primitive rooted trees eventually reduces to the following two technical lemmas.
\begin{Lemme}\label{lemshufflegraft}
   Let $T$ and $T'$ be two non-empty rooted trees, let $v \in V(T)$ be a leaf of $T$ and $T' = B_+^{v'}(F').$ Then 
   \begin{equation*}
    \langle\Delta^*(T \underset{v}{\curvearrowleft} T'),T\otimes T'\rangle\neq0.
   \end{equation*}
\end{Lemme}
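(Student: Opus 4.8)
The plan is to read the pairing through the combinatorial formula for $\Delta^*$ established in Theorem \ref{thm:coprod_adm}, rather than through the recursive Definition \ref{def:shuffle_forests} of the shuffle. Writing $S := T\underset{v}{\curvearrowleft}T'$, that theorem gives
\[
\Delta^*(S)=\sum_{\substack{\Gamma\subseteq V(S)\\ \text{admissible}}} c_\Gamma\; S_{V(S)\setminus\Gamma}\otimes S_\Gamma,
\]
in which every coefficient $c_\Gamma$ is a product of fertilities, hence a strictly positive integer. Pairing against $T\otimes T'$ then collects exactly those admissible $\Gamma$ for which $S_{V(S)\setminus\Gamma}=T$ and $S_\Gamma=T'$, each contributing its positive weight $c_\Gamma$. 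Because no negative signs occur there can be no cancellation, so it will be enough to exhibit a single admissible family realising the pair $(T,T')$.

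The natural candidate is $\Gamma:=V(T')$. First I would check that the induced trees are the expected ones. Grafting $T'$ at the leaf $v$ preserves the order relations inside $T$ and inside $T'$ separately and creates new comparabilities only across $V(T)$ and $V(T')$; consequently $<_S$ restricted to $V(T')$ coincides with $<_{T'}$ and $<_S$ restricted to $V(T)=V(S)\setminus\Gamma$ coincides with $<_T$. This yields $S_\Gamma=T'$ and $S_{V(S)\setminus\Gamma}=T$, both rooted trees, so the first clause of admissibility holds and the pair produced is precisely $T\otimes T'$.

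Next I would verify the two fertility clauses of Definition \ref{def:adm_family}. The only vertex whose fertility is altered by the grafting is $v$, whose fertility rises from $0$ (it is a leaf of $T$) to $1$ in $S$; every other vertex keeps its fertility. Hence for $u\in\Gamma=V(T')$ one has $\text{fert}_S(u)=\text{fert}_{T'}(u)=\text{fert}_{S_\Gamma}(u)$, and for $u\in V(S)\setminus\Gamma=V(T)$ with $u\neq v$ one has $\text{fert}_S(u)=\text{fert}_T(u)=\text{fert}_{S_{V(S)\setminus\Gamma}}(u)$, so both clauses hold automatically; for $u=v$ the clause is vacuous since $\text{fert}_S(v)=1<2$. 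Thus $\Gamma=V(T')$ is admissible, and as $c_\Gamma\geq1$ this single term already forces $\langle\Delta^*(S),T\otimes T'\rangle\geq c_\Gamma>0$.

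I do not expect a serious obstacle here: once Theorem \ref{thm:coprod_adm} is available the argument reduces to the positivity of the coefficients $c_\Gamma$, which rules out cancellation and lets one well-chosen admissible family settle the claim. The only point deserving care is the induced-tree bookkeeping, since in general $E(S_\Gamma)\not\subseteq E(S)$; but here $\Gamma$ and its complement meet only along the single new edge created at $v$, so the induced trees are literally $T'$ and $T$ and no genuine difficulty arises.
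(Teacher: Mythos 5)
Your proof is correct, but it takes a genuinely different route from the paper's. The paper proves the lemma by induction on $|T|+|T'|$, working directly with the recursive definition of $\shuffle^T$: writing $T=B_+^a(T_1\cdots T_k)$, it locates the leaf $v$ inside some $T_i$, applies the induction hypothesis to $T_i\underset{v}{\curvearrowleft}T'$, and passes back and forth through the duality $\langle\Delta^*(S),T\otimes T'\rangle=\langle S,T\shuffle^T T'\rangle$ to lift the conclusion from $T_i$ to $T$. You instead invoke the combinatorial description of Theorem \ref{thm:coprod_adm} (which is legitimately available at this point of the paper, and whose proof does not depend on this lemma, so there is no circularity), exhibit the single admissible family $\Gamma=V(T')$ inside $S=T\underset{v}{\curvearrowleft}T'$, and use the strict positivity of all coefficients $c_\Gamma$ to rule out cancellation. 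Your verification of admissibility is the right bookkeeping: the only fertility changed by the grafting is that of $v$, which goes from $0$ to $1$ and is therefore exempt from the fertility clauses of Definition \ref{def:adm_family}, and the induced trees $S_\Gamma$ and $S_{V(S)\setminus\Gamma}$ are literally $T'$ and $T$ because paths in $S$ between vertices of $T'$ (resp.\ of $T$) never leave $T'$ (resp.\ $T$). As for what each approach buys: yours is induction-free, produces an explicit witness, and yields the sharper quantitative bound $\langle\Delta^*(S),T\otimes T'\rangle\geq c_{V(T')}\geq 1$; the paper's argument needs only Theorem \ref{thm:coprod_dual_ind} and the shuffle recursion, so it would survive even without the admissible-family machinery. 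It is also worth noting that the paper's inductive step silently uses the same no-cancellation phenomenon (all shuffle coefficients are positive) that you make explicit, so your version arguably exposes the key mechanism more clearly.
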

\begin{proof}
    We prove this by induction on $N=|T|+|T'|$. If $N=2$, then $T$ and $T'$ both have only one vertex since both are non-empty, and $T \underset{v}{\curvearrowleft} T'$ is a linear tree with two vertices. Then the result follows from a direct computation.
    
    Suppose that for some $N\geq3$ the statement holds for any rooted trees $t$ and $t'$ such that $|t|+|t'| < N$. Let $T$ and $T'$ be two rooted trees such that $|T|+|T'| = N.$ If $|T|=1$ then as before a direct computation with the inductive formula \eqref{coproddualtree} for $\Delta^*(T\underset{v}{\curvearrowleft} T')$.

    If $|T|\geq2$, write $T = B_+^a(T_1\dots T_k)$ and $T' = B_+^b(T'_1\dots T'_l)$ and consider $v$ a leaf of $T.$  Then we have
    $$T \shuffle^T T' = B_+^a\left((T_1\dots T_k) \shuffle^T T'\right) + B_+^b\left(T \shuffle^T (T'_1\dots T'_l)\right).$$
    Since $|T|\geq2$, we have that $v$ is a leaf one of $T_i,$ for one $i\in \{1,\dots,k\}.$ Then, since the shuffle $T_i\shuffle ^T T'$ appears in $(T_1\dots T_k) \shuffle^T T',$ and $|T_i|+|T'|<N,$ we can use the induction hypothesis to conclude that $T_i\otimes T'$ appears in the coproduct $\Delta^*(T_i\underset{v}{\curvearrowleft} T')$. Going to the dual, this is equivalent to saying that $T_i\underset{v}{\curvearrowleft} T'$ appears in $T_i\shuffle^T T'$. Therefore $T\underset{v}{\curvearrowleft} T'$ appears in $T\shuffle^T T'$. Again going to the dual, this is equivalent to saying that $T\otimes T'$ appears in $\Delta^*(T\underset{v}{\curvearrowleft} T')$, which is what we wanted to show. 
\end{proof}
\begin{Lemme}\label{lemshuffert}
    Let $T,T'$ be non-empty rooted trees and $t\in\calT_\Omega^\circ$ such that $\langle t, T\shuffle T'\rangle\neq0$. Then there exists $ v \in V(t)$ such that $\text{fert}(t) = 1.$
\end{Lemme}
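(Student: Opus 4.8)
The plan is to prove the statement (reading the evident typo $\text{fert}(t)=1$ as $\text{fert}(v)=1$) by strong induction on $N=|T|+|T'|$. The structural fact I will lean on throughout is that, since Section \ref{sec:two} works at $\lambda=0$, the shuffle of two trees is a \emph{sum of trees}: every summand in \eqref{eq:def_shuffle_trees} has the form $B_+^\bullet(\cdots)$. The second ingredient is that neither concatenation of forests nor an application of $B_+^a$ alters the child-set of a vertex already present in a sub-forest, so fertilities of pre-existing vertices are preserved. For the base case $N=2$, both $T$ and $T'$ are single vertices, so $T\shuffle^T T'$ is a sum of linear trees on two vertices, each having a root of fertility $1$.

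For the inductive step, write $T=B_+^a(f)$ and $T'=B_+^{a'}(f')$, so that by \eqref{eq:def_shuffle_trees} with $\lambda=0$,
\[
T\shuffle^T T' = B_+^a(f\shuffle^T T') + B_+^{a'}(T\shuffle^T f').
\]
Since all structure constants in the shuffle formulas are nonnegative, no cancellation occurs, so it suffices to show that \emph{every} tree appearing in either summand has a fertility-$1$ vertex; by commutativity of $\shuffle^T$ I only treat a tree $t=B_+^a(g)$ coming from the first summand, where $g$ is a forest occurring in $f\shuffle^T T'$. I split according to the number $k$ of connected components of $f$ (equivalently, $k=\text{fert}(a)$ in $T$). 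If $k\leq 1$, then $f\shuffle^T T'$ is a sum of trees (the base rule $\emptyset\shuffle^T T'=T'$ when $k=0$, or \eqref{eq:def_shuffle_trees} when $k=1$), hence $g$ is a single tree and the root $a$ of $t$ has exactly one child, giving $\text{fert}(a)=1$ with no appeal to induction. If $k\geq2$, write $f=T_1\cdots T_k$; then \eqref{eq:def_shuffle_forests} (with $n=1$) shows that every such $g$ has the form $g = s\,T_1\cdots\widehat{T_i}\cdots T_k$ for some $i$ and some tree $s$ appearing in $T_i\shuffle^T T'$. As $T_i$ and $T'$ are non-empty trees with $|T_i|+|T'|<|T|+|T'|=N$, the induction hypothesis furnishes a vertex $v\in V(s)$ with $\text{fert}(v)=1$.

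It then remains to transport this $v$ into $t=B_+^a(s\,T_1\cdots\widehat{T_i}\cdots T_k)$: concatenation leaves the child-set of every vertex untouched, and $B_+^a$ only installs $a$ as a new parent of the component roots without changing the child-set of any pre-existing vertex, so $\text{fert}(v)=1$ persists in $t$ and the induction closes. The single point deserving care—which I would flag as the main obstacle—is exactly this invariance when $v$ happens to be the \emph{root} of $s$: one must confirm that grafting a new root above $s$ does not change how many children the root of $s$ has. This is immediate from the definition of $B_+^a$ (the root of $s$ becomes a child of $a$ but retains all of its own children), yet it is the hinge on which the whole argument turns; everything else is bookkeeping with the two shuffle formulas \eqref{eq:def_shuffle_trees} and \eqref{eq:def_shuffle_forests}.
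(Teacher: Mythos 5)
Your proof is correct and follows essentially the same route as the paper: induction on $|T|+|T'|$, splitting $T\shuffle^T T'$ via the recursive formula \eqref{eq:def_shuffle_trees}, applying the induction hypothesis to the inner tree-shuffles, and transporting the fertility-one vertex through concatenation and $B_+^a$. The only difference is one of care: your explicit case split on the number of components of $f$ (including the $f=\emptyset$ and single-tree cases, where the new root itself has fertility one) and your explicit check that concatenation and grafting preserve fertility make rigorous the details that the paper's terser argument leaves implicit.
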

\begin{proof}
    Let $T,T'$ be non-empty rooted trees and let us prove once again this result by induction on $N=|T|+|T'|.$ For $|T|+|T'| = 2$ we have $|T| = |T'| = 1,$. Thus we can write $T = \tdun{a}$ and $T' = \tdun{b}$ for some $(a,b)\in\Omega^2$. Then 
        $$T \shuffle^T T' = \tddeux{a}{b} + \tddeux{b}{a}$$
        and the result holds.

    Assuming that for some $N\geq3$ the result holds for any $t,t'$ with $|t|+|t'| < N,$ let $T,T'$ such that $|T|+|T'| = N.$ We can write $T = B_+^a(F)$ and $T' = B_+^b(F'),$ in this case
    $$T\shuffle^T T' = B_+^a\left(F\shuffle^T T'\right) + B_+^b\left(T\shuffle^T F'\right).$$

    In both expressions $F\shuffle^T T'$ and $T\shuffle^T F'$ we have shuffles of trees with less than N nodes. Therefore, by the induction hypothesis, a tree of each of the forests appearing in these product has vertility exactly one. Therefore it always exists a vertex in both $B_+^a(F\shuffle T')$ and $B_+^b(T\shuffle F')$ that has fertility exactly one and the lemma is proven by induction.
\end{proof}
We can now fully characterize the primitive trees for the dual coproduct $\Delta^*$.
\begin{thm}  \label{thm:description_prim_trees}
    A tree is primitive for $\Delta^*$ if, and only if, each of its vertex except the leafs has fertility at least two.
\end{thm}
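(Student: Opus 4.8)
The plan is to translate primitivity into a purely combinatorial statement about admissible families and then determine, by induction on the number of vertices, exactly which trees admit only the two trivial families $\emptyset$ and $V(T)$. First I would invoke the combinatorial formula of Theorem \ref{thm:coprod_adm}, namely
\[
 \Delta^*(T)=\sum_{\Gamma\subseteq V(T)\text{ admissible}} c_\Gamma\, T_{V(T)\setminus\Gamma}\otimes T_\Gamma .
\]
The families $\Gamma=\emptyset$ and $\Gamma=V(T)$ always occur, with $c_\emptyset=c_{V(T)}=1$, and they produce precisely the terms $T\otimes\emptyset$ and $\emptyset\otimes T$. Since each $c_\Gamma$ is a product of fertilities, hence a positive integer, there is no cancellation; moreover any \emph{nontrivial} family $\emptyset\neq\Gamma\neq V(T)$ contributes a term $T_{V(T)\setminus\Gamma}\otimes T_\Gamma$ with both tensor factors non-empty, so it can never be absorbed into the two trivial terms. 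Therefore $T$ is primitive if and only if $\emptyset$ and $V(T)$ are its only admissible families, and the theorem reduces to the claim that \emph{$T$ has no nontrivial admissible family iff every non-leaf vertex of $T$ has fertility at least two.}

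For the ``only if'' direction I would argue by contraposition. If $T$ has a non-leaf vertex $v$ of fertility exactly $1$, let $w$ be its unique child and take $\Gamma$ to be the vertex set of the subtree of $T$ rooted at $w$. Then $T_\Gamma$ is that subtree and $T_{V(T)\setminus\Gamma}$ is $T$ with this subtree pruned; both are rooted trees. The only vertex that loses a child is $v$, and since $\text{fert}(v)=1<2$ no fertility condition of Definition \ref{def:adm_family} is imposed on it, so $\Gamma$ is admissible. As $w\in\Gamma$ and $v\notin\Gamma$, the family is nontrivial, so $T$ is not primitive.

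For the ``if'' direction I would induct on $|T|$ using the decomposition $T=B_+^a(F)$ together with Lemma \ref{lem:adm_fam_ind}. If $F=t$ is a single tree the root has fertility $1$, the hypothesis fails, and there is nothing to prove here. If $F=T_1\cdots T_n$ with $n\geq2$, then the requirement that every non-leaf vertex of $T$ have fertility $\geq2$ is equivalent to the same requirement on each $T_i$ (the root is automatically fine), so by the inductive hypothesis each $T_i$ has only trivial admissible families. Now suppose $\Gamma$ is admissible and nontrivial; by the symmetry $\Gamma\leftrightarrow V(T)\setminus\Gamma$ of Definition \ref{def:adm_family} I may assume the root $a\in\Gamma$. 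Since $\text{fert}(a)=n\geq2$, admissibility forces the fertility of $a$ in $T_\Gamma$ to equal $n$. Writing $\Gamma_i=\Gamma\cap V(T_i)$, the children of $a$ in $T_\Gamma$ are the $\Gamma$-minimal vertices of the branches, so $\sum_i|\min\Gamma_i|=n$. Imposing in addition that $T_{V(T)\setminus\Gamma}$ be \emph{connected} forces all but one $V(T_i)\setminus\Gamma_i$ to be empty, say the exception is the index $i_0$; for $i\neq i_0$ this gives $\Gamma_i=V(T_i)$ and $|\min\Gamma_i|=1$, whence $|\min\Gamma_{i_0}|=1$ and $\Gamma_{i_0}$ is a non-empty proper subset of $V(T_{i_0})$ inducing rooted trees on both sides. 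The fertility conditions on the vertices inside $T_{i_0}$ are inherited verbatim from the admissibility of $\Gamma$ in $T$, so $\Gamma_{i_0}$ is a nontrivial admissible family of $T_{i_0}$, contradicting the inductive hypothesis; hence no nontrivial $\Gamma$ exists.

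The main obstacle is exactly this confinement step. It is tempting to claim that an internal vertex and all of its children must lie on the same side of an admissible cut, but this is \emph{false} in general: when fertility-$1$ vertices are present, a child lying on the opposite side can be ``compensated'' by deeper vertices that reappear as children in the induced tree (a phenomenon one can exhibit on a small tree with an interior vertex of fertility one). The hypothesis that every internal vertex has fertility $\geq2$ is precisely what rules this out, and the delicate point is to combine the root's fertility-preservation condition with the requirement that \emph{both} $T_\Gamma$ and $T_{V(T)\setminus\Gamma}$ be connected, so as to trap the nontrivial part of $\Gamma$ within a single branch and then apply the inductive hypothesis there.
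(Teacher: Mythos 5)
Your proof is correct, but it follows a genuinely different route from the paper's. The paper never touches admissible families here: it works on the product side by duality, via two elementary inductions on the shuffle recursion. Its Lemma \ref{lemshufflegraft} shows that if $T'$ is grafted onto a leaf $v$ of $T$ then $T\otimes T'$ appears in $\Delta^*(T \underset{v}{\curvearrowleft} T')$, which gives ``primitive $\Rightarrow$ no internal vertex of fertility one'' by contraposition; its Lemma \ref{lemshuffert} shows that \emph{every} tree occurring in a shuffle of two non-empty trees has a vertex of fertility one, which gives the converse, again by contraposition. You instead invoke the combinatorial description of Theorem \ref{thm:coprod_adm}, observe that all coefficients $c_\Gamma$ are positive so there is no cancellation, and thereby reduce primitivity to the purely set-theoretic statement that $\emptyset$ and $V(T)$ are the only admissible families; the forward direction is then an explicit pruning construction at a fertility-one vertex, and the converse a structural induction confining any nontrivial family to a single branch via the root's fertility-preservation condition plus connectedness of $T_{V(T)\setminus\Gamma}$. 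What each approach buys: the paper's argument is lighter on prerequisites (it does not need Theorem \ref{thm:coprod_adm}, and Lemma \ref{lemshuffert} is a statement of independent interest about shuffle products), whereas yours stays entirely on the combinatorial side, makes the no-cancellation mechanism explicit, and produces a reusable intermediate characterisation (primitive $\Leftrightarrow$ only trivial admissible families). Two small points to tidy in your write-up: the base case $|T|=1$ (i.e.\ $T=B_+^a(\emptyset)$) is not covered by your two cases, though it is trivial since a one-vertex tree has only the subsets $\emptyset$ and $V(T)$; and the assertion that $\Gamma_{i_0}$ is admissible in $T_{i_0}$ deserves one more line, namely that $(T_{i_0})_{\Gamma_{i_0}}$ is connected because it has the unique minimal vertex you extracted and every non-minimal vertex of $\Gamma_{i_0}$ has its $T_\Gamma$-parent inside $\Gamma_{i_0}$, while the fertility conditions and the complement-side tree restrict verbatim.
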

\begin{proof}
    $(\implies)$ Let $T$ be a tree and $v \in V(T).$ Suppose $fert(v) = 1$ and let $v'$ be the direct descendent of $v.$ We can write $T = \tilde{T} \underset{v}{\curvearrowleft} \tilde{T'}$ where $\tilde{T'}$ has $v'$ as its root and $v$ is a leaf of $\tilde{T}.$ Therefore, from the Lemma \ref{lemshufflegraft}, $T$ is not primitive.

    $(\impliedby)$ Let $T\in\calT_\Omega^\circ$ be a tree and suppose that it's not primitive for $\Delta^*.$ Therefore, there exists two rooted trees $t_1$ and $t_2$ such that $\emptyset \neq t_1 \neq T,$ $\emptyset \neq t_2\neq T$ and 
    \begin{equation*}
     \langle T,t_1\shuffle^T t_2\rangle = \langle \Delta^*(T),t_1\otimes t_2\rangle\neq 0.
    \end{equation*}
    By Lemma \ref{lemshuffert}, each such tree in the set has at least one vertex with fertility one, hence the result holds.
\end{proof}
This Theorem allows us to deduce an inductive formula for the number of primitive trees with $n$ vertices. Let $p_n$ be the number of primitive trees for $\Delta^*$, not counting linear combinations. As stated above, $p_n$ is a lower bound for the dimension of the space of primitive elements for $\Delta^*$.
\begin{Prop} \label{prop:prim_trees}
    $p_n$ is given by the recursive formula $p_0=0$, $p_1=1$, $p_2=0$ and, for $n\geq2$:
    \begin{equation} \label{eq:induction_primitive}
        p_{n+1}=\sum_{k=2}^n \; \sum_{\substack{n_1\geq\cdots\geq n_k\geq1 \\ n_1+\cdots +n_k=n}}\; \prod P_{n_i}
    \end{equation}
    where $P_{n_i} = \binom{p_{n_i} + k_{n_i} - 1}{k_{n_i}}$ and $k_{n_i}$ is the number of times $n_i$ appears in the $k$-tuple $(n_1,\dots,n_k)$,and where the product is over each distinct $n_i$.
\end{Prop}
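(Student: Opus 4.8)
The plan is to build everything on the combinatorial characterisation of Theorem \ref{thm:description_prim_trees}: a rooted tree is primitive for $\Delta^*$ exactly when every non-leaf vertex has fertility at least $2$. Since this condition depends only on the fertilities of the vertices and not on their decorations, the decorations play no role in the enumeration, so throughout I would count undecorated rooted trees (equivalently, the underlying shapes). First I would settle the base cases directly: the single vertex $\tun$ has no non-leaf vertex and is therefore vacuously primitive, giving $p_1 = 1$; the unique tree with two vertices is linear and its root has fertility $1$, giving $p_2 = 0$; and $p_0 = 0$ by convention.

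For the inductive step, fix $n \geq 2$ and consider a primitive tree $T$ with $n+1 \geq 3$ vertices, written as $T = B_+(T_1 \cdots T_k)$ where $T_1, \ldots, T_k$ are the subtrees hanging from the root. As $|T| \geq 3$ the root is not a leaf, so its fertility $k$ satisfies $k \geq 2$. The key observation is that each vertex $v$ other than the root keeps the same fertility in $T$ as in the subtree $T_i$ containing it, and is a leaf of $T$ if and only if it is a leaf of $T_i$. Hence $T$ is primitive if and only if $k \geq 2$ and every $T_i$ is itself primitive. Because our forests are non-planar, $T_1 \cdots T_k$ is an unordered collection, so this sets up a bijection between primitive trees with $n+1$ vertices and multisets $\{T_1, \ldots, T_k\}$ of primitive trees with $k \geq 2$ and $\sum_i |T_i| = n$, the root being recovered by applying $B_+$.

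It then remains to count these multisets. I would stratify them according to the partition $n_1 \geq \cdots \geq n_k \geq 1$ of $n$ recording the sizes $|T_i|$; since there are $k \geq 2$ parts, each at least $1$, we have $2 \leq k \leq n$, which fixes the outer sums of \eqref{eq:induction_primitive}. For a fixed partition and each distinct value $m$ occurring with multiplicity $k_m$, choosing the $k_m$ subtrees of size $m$ is the same as choosing an unordered multiset of size $k_m$ from the $p_m$ available primitive trees of size $m$, which can be done in $\binom{p_m + k_m - 1}{k_m}$ ways. As subtrees of distinct sizes are chosen independently, the number of primitive trees for a given partition is $\prod P_{n_i}$ (the product over distinct values $n_i$), and summing over all admissible partitions yields Equation \eqref{eq:induction_primitive}.

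The main, though modest, obstacle is the bookkeeping of the multiset count: one must justify that choices at distinct sizes are independent and that repetitions among equal-sized subtrees are governed by the multiset coefficient $\binom{p_m + k_m - 1}{k_m}$ rather than a plain binomial. A reassuring check I would record is that any size $m$ with $p_m = 0$ (for instance $m = 2$) contributes a factor $\binom{k_m - 1}{k_m} = 0$ whenever it is used, so that partitions involving forbidden subtree sizes automatically drop out of the sum, exactly as required.
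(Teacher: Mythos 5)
Your proof is correct and follows essentially the same route as the paper's: decompose a primitive tree with $n+1$ vertices as $T = B_+(T_1\cdots T_k)$, use the characterisation of Theorem \ref{thm:description_prim_trees} to force $k\geq 2$ and each $T_i$ primitive, then stratify by the partition $(n_1,\dots,n_k)$ of $n$ and count multisets of equal-sized subtrees with the coefficient $\binom{p_{n_i}+k_{n_i}-1}{k_{n_i}}$. If anything, your write-up is slightly more complete, since you verify both directions of the correspondence (that grafting primitive subtrees under a root of fertility $k\geq 2$ yields a primitive tree), a converse the paper leaves implicit.
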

\begin{proof}
     The initial values of $p_n$ are computed by enumeration of the primitive trees. For the inductive formula, let $n\geq2$ and $T=B_+(T_1\cdots T_k)$ be a primitive tree with $n+1$ vertices. If $k=1$, the root has only one direct descendant, which is in contradiction with the previous characterisation of primitive trees. Thus we must have $k\geq2$. Since $T$ has $n+1$ vertices and each of the $T_i$ has at least one vertex, we must also have $k\leq n$.

    Now set $n_i:=|V(T_i)|$. For $T$ to have $n+1$ vertices, we must have $n_1+\cdots+n_k=n$. Since a permutation of $T_i$ does not change $T$, we have that $(n_1,\cdots,n_k)$ must be a partition of the integer $n$ (i.e. a finite unordered sequence whose terms sum to $n$). Up to a relabelling, these terms can be put in a decreasing order. Thus we obtain the summand of the second sum sign in Equation \eqref{eq:induction_primitive}.

    Notice that for each $T_i$, each of the internal vertices of $T_i$ is an internal vertex of $T$, and thus has at least two descendants since $T$ is primitive. Then, for each $n_i,$ consider $k_{n_i}$ as the number of times $n_i$ appears in the partition $n.$ The number of different ways we can arrange $p_{n_i}$ trees on $k_{n_i}$ spots is exactly the number of possible $k_{n_i}$-combinations with repetitions on the set $p_{n_i}$ primitive rooted trees with $n_i$ vertices. It is a classic result of combinatorics (see for example \cite{morgado1991analise}) that this number is $$\binom{p_{n_i} + k_{n_i} -1}{k_{n_i}}.$$
    The product over all those permutations for each $n_i$ gives us the product on the formula.
    
\end{proof}

With this formula one can compute the following sequence of the number PT of primitive trees with V$\in\N$ vertices.

\begin{center}
\begin{tabular}{ |c|c|c|c|c|c|c|c|c|c|c|c|c|c| } 
 \hline
 V & 0 & 1 & 2 & 3 & 4 & 5 & 6 & 7 & 8 & 9 & 10 & 11 & 12 \\ 
 \hline 
 PT & 0 & 1 & 0 & 1 & 1 & 2 & 3 & 6 & 10 & 19 & 35 & 67 & 127 \\ 
 \hline
\end{tabular}
\end{center}

\begin{center}
\begin{tabular}{ |c|c|c|c|c|c|c|c|c|c|c|c| } 
 \hline
 V & 13 & 14 & 15 & 16 & 17 & 18 & 19 & 20 & 21 & 22 & 23  \\ 
 \hline 
 PT & 248 & 482 & 952 & 1885 & 3765 & 7546 & 15221 & 30802 & 62620 & 127702 & 261335  \\ 
 \hline
\end{tabular}
\end{center}
This sequence does not seem to appear on the Online Encyclopedia of Online Sequences.

\section{Rooted trees and Rota-Baxter algebras} \label{sec:three}

In this section, $A$ will always denote an associative and commutative algebra over a field $\K$ whose product we omit for readability.

\subsection{Prerequisites on Rota-Baxter algebras}

Rota-Baxter algebras (RBA for short) were introduced  \cite{baxter1960analytic} and further developped in \cite{rota1969baxter} and \cite{cartier1972structure} among others.
We recall the needed definition and results of the theory.
\begin{defi} \label{def:RBA_cat}
 \begin{enumerate}
  \item \label{superdef}
  Let $\lambda \in \K.$  A \textbf{Rota-Baxter algebra of weight $\lambda$} (RBA of weight $\lambda$) is a pair $(R,P)$ consisting of an algebra $R$ and a linear operator $P: R\to R$ that satisfies the equation
		$$P(x)P(y) = P(xP(y)) + P(P(x)y) + \lambda P(xy), \quad \forall x,y \in R.$$
  \item \label{Rota-Baxter morphism}
		Let $(R_1,P_1)$ and $(R_2,P_2)$ be two Rota-Baxter algebras of the same weight $\lambda.$ A \textbf{Rota-Baxter homomorphism} is an algebra homomorphism $\bar{f}: (R_1,P_1) \to (R_2,P_2)$ such that 
		$$\bar{f} \circ P_1 = P_2 \circ \bar{f}.$$
 \end{enumerate}
\end{defi}
Let $A$ be an algebra. Recall that a free Rota-Baxter algebra of weight $\lambda$ on $A$ is an RBA $(R,P)$ of weight $\lambda$ and an algebra homomorphism $\phi : A \to R,$ such that for any RBA $(R',P')$ of weight $\lambda$ and algebra homomorphism $f : A \to R'$ there is a unique Rota-Baxter algebra morphism $\bar{f}: (R,P) \to (R',P')$ such that $f = \bar{f} \circ \phi$. Free RBA were described in \cite{GUO2000117} (see also \cite{Guo}). We now recall this construction.

For an algebra $A$, recall that $\calW_A$ is the vector space freely generated by words written in the alphabet $A$. We denote the $\emptyset$ the empty word which is an element of $\calW_A$ and by $\sqcup$ the concatenation product on $\calW_A$:
\begin{equation*}
 (a_1\cdots a_p)\sqcup(b_1\cdots b_q)=(a_1\cdots a_pb_1\cdots b_q).
\end{equation*}

We further endow $\calW_A$ with the product $\diamond_\lambda$ defined by:
$$a\diamond_\lambda b = (a_0.b_0)\sqcup(a'\shuffle_\lambda b')$$	
where $a = a_0 \sqcup a', \; b = b_0 \sqcup b'$. As before, $a_0.b_0\in A$ is the multiplication in $A$ of $a_0$ and $b_0$.

Further set $P_A : W_A \to W_A$ to be
$$P_A(w) = (1_A) \sqcup w$$
and consider $j_A: A \to \calW_A$ the canonical inclusion map. The result of \cite{GUO2000117,Guo} then reads:
\begin{thm}
 $(\calW_A,P_A)$ is a Rota-Baxter algebra of weight $\lambda$ for the product $\diamond_\lambda$. Together with the map $j_A,$ it is the free commutative Rota-Baxter algebra on A of weight $\lambda.$ 
\end{thm}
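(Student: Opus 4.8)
The plan is to establish three things in turn: that $(\calW_A,\diamond_\lambda)$ is a commutative, associative, unital algebra; that $P_A$ is a Rota-Baxter operator of weight $\lambda$ for this product; and that $(\calW_A,P_A)$ together with $j_A$ satisfies the universal property of Definition \ref{def:RBA_cat}. Commutativity of $\diamond_\lambda$ is immediate, since it only combines the commutative product of $A$ on the heads with the shuffle $\shuffle_\lambda$ on the tails, and the latter is commutative because $A$ is (cf.\ Definition \ref{def:shuffles}). Associativity I would prove by induction on the total length $|a|+|b|+|c|$: unfolding $\diamond_\lambda$ together with the recursive clause of $\shuffle_\lambda$, the claim reduces to associativity of the product of $A$ on the three heads and associativity of $\shuffle_\lambda$ on the tails. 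The empty word plays the role of unit, with the remaining empty-word conventions as in \cite{GUO2000117,Guo}; these are the routine (if slightly tedious) verifications.

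The heart of the first assertion is the Rota-Baxter identity, which here is essentially definitional. Writing two words as $x=x_0\sqcup x'$ and $y=y_0\sqcup y'$, both $P_A(x)=1_A\sqcup x$ and $P_A(y)=1_A\sqcup y$ have head $1_A$, so that $P_A(x)\diamond_\lambda P_A(y)=1_A\sqcup(x\shuffle_\lambda y)=P_A(x\shuffle_\lambda y)$. Expanding $x\shuffle_\lambda y$ by the recursive clause of Definition \ref{def:shuffles} produces the three summands $x_0\sqcup(x'\shuffle_\lambda y)$, $y_0\sqcup(x\shuffle_\lambda y')$ and $\lambda\,(x_0\cdot y_0)\sqcup(x'\shuffle_\lambda y')$, and prepending $1_A$ to each identifies them precisely with $P_A(x\diamond_\lambda P_A(y))$, $P_A(P_A(x)\diamond_\lambda y)$ and $\lambda P_A(x\diamond_\lambda y)$. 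Thus the weight-$\lambda$ Rota-Baxter relation is exactly the three-term recursion defining $\shuffle_\lambda$, read after a prepended $1_A$; this is the conceptual reason the construction works, and it is a short computation once the heads are recognised to be equal.

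For the universal property, given a commutative Rota-Baxter algebra $(R',P')$ of weight $\lambda$ and an algebra homomorphism $f:A\to R'$, I would define $\bar f:\calW_A\to R'$ recursively on word length by $\bar f(a)=f(a)$ on one-letter words and $\bar f(a_0\sqcup w)=f(a_0)\,P'(\bar f(w))$ for longer words. This choice is forced: since every word satisfies $a_0\sqcup w=a_0\diamond_\lambda P_A(w)$, the generators $j_A(A)$ generate $\calW_A$ under $\diamond_\lambda$ and $P_A$, so any Rota-Baxter morphism extending $f$ must coincide with $\bar f$, which yields uniqueness. The identities $\bar f\circ j_A=f$ and $\bar f\circ P_A=P'\circ\bar f$ are read off the definition (the latter using $f(1_A)=1_{R'}$). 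The one substantial point is multiplicativity, $\bar f(u\diamond_\lambda v)=\bar f(u)\,\bar f(v)$, which I would prove by induction on $|u|+|v|$: writing $u=u_0\sqcup u'$, $v=v_0\sqcup v'$ and using commutativity of $R'$, the product $P'(\bar f(u'))\,P'(\bar f(v'))$ is expanded by the Rota-Baxter relation in $R'$, and its three resulting terms match term-by-term the image under $\bar f$ of the three terms of $u'\shuffle_\lambda v'$ coming from the shuffle recursion. This matching of the weight-$\lambda$ Rota-Baxter relation with the three-term shuffle recursion is the step to watch: it is precisely where the definitions of $\diamond_\lambda$ and $P_A$ are tuned to one another, and it is the only place that genuinely uses that $(R',P')$ carries weight $\lambda$. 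Assembling these points recovers the free commutative Rota-Baxter algebra of \cite{GUO2000117,Guo} in the present notation.
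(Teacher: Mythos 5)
Your overall strategy is the standard Guo--Keigher argument, and it is worth noting that the paper itself does not prove this theorem: it quotes it from \cite{GUO2000117,Guo}. Your three steps (algebra structure, the Rota--Baxter identity read off as the three-term shuffle recursion behind a prepended $1_A$, then a recursive construction of $\bar f$ whose multiplicativity is proved by matching the weight-$\lambda$ Rota--Baxter relation in $R'$ against the shuffle recursion) are exactly how the cited result is established, and they also run parallel to the proof the paper does give for its forest analogue (Theorem \ref{thm:quasi_univ_prop}). A genuinely good point of your write-up is the uniqueness argument: since $a_0\sqcup w=a_0\diamond_\lambda P_A(w)$, the image $j_A(A)$ generates all non-empty words under $\diamond_\lambda$ and $P_A$ alone, so uniqueness needs no auxiliary hypothesis --- in contrast with the forest case, where the paper must additionally assume that $\bar\varphi$ is a morphism for concatenation.

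There is, however, one concrete error: the unit of $\diamond_\lambda$ is not the empty word but the one-letter word $(1_A)=j_A(1_A)=P_A(\emptyset)$; indeed $(1_A)\diamond_\lambda(b_0\sqcup b')=(1_A\cdot b_0)\sqcup(\emptyset\shuffle_\lambda b')=b_0\sqcup b'$. If you keep $\emptyset$ as an element of the algebra and decree $\emptyset\diamond_\lambda w=w$, the statements you set out to prove become false. Taking $x=\emptyset$ and $y$ a non-empty word in the Rota--Baxter identity gives
\begin{equation*}
P_A(\emptyset)\diamond_\lambda P_A(y)=(1_A)\diamond_\lambda(1_A\sqcup y)=1_A\sqcup y,
\end{equation*}
whereas
\begin{equation*}
P_A\big(\emptyset\diamond_\lambda P_A(y)\big)+P_A\big(P_A(\emptyset)\diamond_\lambda y\big)+\lambda P_A\big(\emptyset\diamond_\lambda y\big)=1_A\sqcup 1_A\sqcup y+(1+\lambda)\,1_A\sqcup y,
\end{equation*}
so the identity fails exactly on the one element that your verification (written for $x=x_0\sqcup x'$, $y=y_0\sqcup y'$ non-empty) does not cover. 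For the same reason $\bar f\circ P_A=P'\circ\bar f$ fails at $\emptyset$ (it would force $P'(1_{R'})=1_{R'}$), and $\emptyset$ is not generated by $j_A(A)$, so your uniqueness argument does not reach it. The fix is to take the underlying space to be the span of the \emph{non-empty} words, which is stable under $\diamond_\lambda$ and $P_A$ and has unit $(1_A)$: this is precisely Guo--Keigher's mixable shuffle algebra, and with that correction all of your computations go through verbatim. (The paper's own statement, which keeps $\emptyset\in\calW_A$, carries the same imprecision, but a proof has to resolve it --- in the direction just described, not by promoting $\emptyset$ to a unit.)
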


\subsection{Diamond products on forests}

$\lambda$-shuffles on forests were already introduced in Subsection \ref{subsec:shuffle_coalg}. Thus one could built $\diamond_\lambda$ products on forests from these $\lambda$-shuffles on forests, but this construction does not have so much good properties. In order to take into account the concatenation product of rooted forests (which is very different from the concatenation of words, since the former is commutative while the later is not) we need modify the $\lambda$-shuffles of rooted forests. Recall that for a set $\Omega$, we set $\calF_\Omega^+:=\calF_\Omega\setminus\{\emptyset\}$ to be the set of non-empty forests.
\begin{defi} \label{def:new_shuffle}
 Let $A$ be a commutative algebra over $\K$ and set $\lambda\in\K$. We define recursively the product $\diamond_\lambda$ on $\calF_A^+$ et $\ast_\lambda$ on $\calF_A$.
 \begin{itemize}
  \item Set $\emptyset\ast_\lambda\emptyset:=\emptyset$
 \end{itemize}
 Assume now that for $N\geq1$, $F\diamond_\lambda F'$ has been defined for any non empty forests $F$ and $F'$ such that $|F|+|F'|\leq N$, and that $F\ast_\lambda F'$ has been defined for any forests $F$ and $F'$ such that $|F|+|F'|\leq N$. Let $f$ and $f'$ be two forests such that $|f|+|f'|=N+1$.
 \begin{itemize}
  \item If $f'=\emptyset$  set $f \ast \emptyset = \emptyset\ast f = f$.
  \item If $f = T_1 \cdots T_k$ and $f' = t_1 \cdots t_n$ are both non-empty set 
$$f\diamond_\lambda f':= \frac{1}{kn} \sum_{i = 1}^{k} \sum_{j = 1}^{n} \big((T_i \diamond_\lambda t_j) T_1 \dots \widehat{T_i}\dots T_k t_1 \dots \widehat{t_j} \dots t_n\big).$$
  \item Again if $f = T_1 \cdots T_k$ and $f' = t_1 \cdots t_n$ are both non-empty set 
\begin{equation*}
 f\ast_\lambda f' := \dfrac{1}{k}\sum_{i = 1}^{k}B_+^{a_i}(f_i\ast_\lambda f')T_1 \dots \widehat{T_i}\dots T_k + \dfrac{1}{n}\sum_{j = 1}^{n}B_+^{a'_j}(f\ast_\lambda f'_j)t_1 \dots \widehat{t_j}\dots t_n + \lambda f\diamond_\lambda f'
\end{equation*}
where we used $T_i = B_+^{a_i}(f_i)$ and $t_j = B_+^{a'_j}(f'_j).$
  \item If $f$ and $f'$ are both non-empty trees $f=t=B_+^a(F)$ and $f'=t'=B_+^{a'}(F')$ we set 
 \begin{equation*}
  B_+^a(F) \diamond_\lambda B_+^{a'}(F') := B_+^{aa'}(F \ast_\lambda F')
 \end{equation*}
  with $aa'$ being the product of $a$ and $a'$ in $A$.
 \end{itemize}
\end{defi}
\begin{remark}
 Notice that for any trees $t,t'$ in $\calT_A$, $T\ast_\lambda T'=T\shuffle_\lambda^T T'$ (see Subsection \ref{subsec:shuffle_coalg} below for the definition of the $\lambda$-shuffle products of rooted trees and forests). However, the two products do not coincide on non-connected forests. In this study, $\ast_\lambda$ only appears as an intermediate product for the definition of $\diamond_\lambda$ and we shall not study it further here.
\end{remark}
Before stating the main property of the $\diamond_\lambda$, let us write some examples of this product.
\begin{Eg}
 Let $A$ be a commutative algebra, and $a$, $b$, $c$, $d$ and $e$ be elements of $A$. Then
 \begin{align*}
  & \tdun{a}\diamond_0 \tdun{b} = \tdun{ab},\quad \tdun{a} \diamond \tdun{b}\tdun{c} = \frac{1}{2}(\tdun{ab}\tdun{c} + \tdun{ac}\tdun{b}), \\ 
  & \tddeux{a}{b}\diamond_\lambda \tdtroisun{c}{d}{e} = \tdquatrequatre{ac}{b}{d}{e} + \frac{1}{2}\big(\tdquatredeux{ac}{e}{d}{b} + \tdquatredeux{ac}{d}{e}{b}\big) + \frac{\lambda}{2}\big(\tdtroisun{ac}{bd}{e} + \tdtroisun{ac}{be}{d}\big).
 \end{align*}
\end{Eg}
The main result of this Subsection is then
\begin{thm} \label{thm:quasi_univ_prop}
	Let $A$ be a unitary commutative algebra over $\K$ and let $\lambda\in\K$. Then $(\calF_A,\diamond_\lambda, B_+^{1_A})$ is a Rota-Baxter algebra of weight $\lambda$. Furthermore, for any commutative associative Rota-Baxter algebra $(R,P)$ of weight $\lambda$ and any algebra homomorphism $\varphi: A\to R$ there is a Rota-Baxter algebra homomorphism $\bar{\varphi}: \calF^+_A \to R$ such that the following diagram commutes
 \begin{equation}\label{diagramdiam}
\begin{tikzcd}
A \arrow[rd, "\varphi"] \arrow[r, "j_A"] & {(\calF^+_{A}, B_+^{1_{A}})} \arrow[d, "\bar{\varphi}"] \\
                                               & {(R,P)}                                          
\end{tikzcd}
\end{equation}
    where $j_A: A \to \calT_A$ is the natural embedding.

    Furthermore, $\bar{\varphi}$ is the unique such Rota-Baxter algebra morphism with this property which is also a morphism for the concatenation product of trees.
\end{thm}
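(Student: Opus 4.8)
The plan is to treat the three assertions in turn, the whole argument resting on a single algebraic identity that converts the recursive clause for $\ast_\lambda$ into the Rota-Baxter relation. First I would record that, by the tree clause of Definition \ref{def:new_shuffle}, $B_+^{1_A}(x)\diamond_\lambda B_+^{1_A}(y)=B_+^{1_A}(x\ast_\lambda y)$ (using $1_A\cdot 1_A=1_A$), and that unwinding the forest and $\ast_\lambda$ clauses gives, for all $x,y\in\calF_A^+$,
\begin{equation*}
 x\ast_\lambda y = x\diamond_\lambda B_+^{1_A}(y)+B_+^{1_A}(x)\diamond_\lambda y+\lambda\,x\diamond_\lambda y .
\end{equation*}
I would check this by matching terms: the first summand of the $\ast_\lambda$-clause (with its $\tfrac1k$) is exactly $x\diamond_\lambda B_+^{1_A}(y)$, since $B_+^{1_A}(y)$ is a single tree and $T_i\diamond_\lambda B_+^{1_A}(y)=B_+^{a_i}(f_i\ast_\lambda y)$, and symmetrically for the second summand, while the $\lambda$-term is literally $\lambda\,x\diamond_\lambda y$. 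Combining the two displays with $P:=B_+^{1_A}$ yields $P(x)\diamond_\lambda P(y)=P\big(x\diamond_\lambda P(y)+P(x)\diamond_\lambda y+\lambda\, x\diamond_\lambda y\big)$, which is precisely the weight-$\lambda$ Rota-Baxter relation. Together with commutativity of $\diamond_\lambda$ (immediate from the symmetry of the defining formulas and commutativity of $A$, proved simultaneously with that of $\ast_\lambda$ by induction on the number of vertices) and the fact that $\tdun{1_A}=B_+^{1_A}(\emptyset)$ is a two-sided unit for $\diamond_\lambda$, this establishes assertion 1. I emphasise that no associativity is claimed: since $\ast_\lambda$ restricts to the non-associative $\shuffle^T_\lambda$ on trees, $\diamond_\lambda$ is genuinely non-associative, consistent with the notion of algebra underlying Definition \ref{def:RBA_cat}.

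For assertion 2 I would define $\bar\varphi$ on the basis of forests by the recursion $\bar\varphi(\tdun{a})=\varphi(a)$, $\bar\varphi(B_+^a(F))=\varphi(a)\,P(\bar\varphi(F))$ for $F\neq\emptyset$, and $\bar\varphi(T_1\cdots T_n)=\bar\varphi(T_1)\cdots\bar\varphi(T_n)$, extended linearly. By construction $\bar\varphi\circ j_A=\varphi$, $\bar\varphi$ is a morphism for concatenation, and $\bar\varphi\circ B_+^{1_A}=P\circ\bar\varphi$ on $\calF_A^+$ (because $\varphi(1_A)=1_R$). The substantive point is that $\bar\varphi$ is a homomorphism for $\diamond_\lambda$, which I would prove by induction on $N=|F|+|F'|$. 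The crucial case is two trees $T=B_+^a(F)$, $T'=B_+^{a'}(F')$: writing $u=\bar\varphi(F)$, $v=\bar\varphi(F')$ and applying the identity above together with the induction hypothesis gives $\bar\varphi(F\ast_\lambda F')=uP(v)+P(u)v+\lambda uv$, whence by the Rota-Baxter relation \emph{in $R$}
\begin{equation*}
 \bar\varphi(T\diamond_\lambda T')=\varphi(aa')\,P\big(uP(v)+P(u)v+\lambda uv\big)=\varphi(a)\varphi(a')P(u)P(v)=\bar\varphi(T)\bar\varphi(T'),
\end{equation*}
where I use that $R$ is commutative and associative to regroup the factors. The remaining case, where $F$ or $F'$ is non-connected, follows at once from the forest clause of Definition \ref{def:new_shuffle}, multiplicativity of $\bar\varphi$ over concatenation, and the tree case applied to the smaller pairs $(T_i,t_j)$: the averaging coefficient $\tfrac1{kn}$ cancels against the $kn$ identical products $\prod_i\bar\varphi(T_i)\prod_j\bar\varphi(t_j)$.

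Finally, for assertion 3, I would show that any Rota-Baxter homomorphism $\psi\colon\calF_A^+\to R$ which both makes \eqref{diagramdiam} commute and respects concatenation must coincide with $\bar\varphi$, again by induction on the number of vertices. On one-vertex trees $\psi(\tdun{a})=\psi(j_A(a))=\varphi(a)$; for a tree one uses the decomposition $B_+^a(F)=\tdun{a}\diamond_\lambda B_+^{1_A}(F)$, so that $\psi(B_+^a(F))=\psi(\tdun{a})\,P(\psi(F))=\varphi(a)P(\psi(F))$, matching the recursion; and on non-connected forests the concatenation-morphism hypothesis forces the product formula. This last point is exactly where the extra hypothesis is indispensable: non-connected forests never arise as $\diamond_\lambda$-products of trees (a $\diamond_\lambda$ of two trees is again a tree), so the Rota-Baxter structure alone does not determine $\psi$ on them, which is the precise sense in which the property is only ``quasi-universal''. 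I expect the main obstacle to be neither conceptual step but the careful handling of the degenerate cases (empty and single-tree arguments) in the defining recursions of $\diamond_\lambda$ and $\ast_\lambda$, and the verification that every sub-product appearing genuinely has strictly fewer vertices so that the inductions are well-founded.
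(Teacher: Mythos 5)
Your proposal is correct and takes essentially the same approach as the paper: the same identity $x\ast_\lambda y = x\diamond_\lambda B_+^{1_A}(y)+B_+^{1_A}(x)\diamond_\lambda y+\lambda\, x\diamond_\lambda y$ (which the paper verifies inline rather than isolating as a display) yields the Rota-Baxter relation, the same recursive definition of $\bar{\varphi}$ with the same induction on total vertex number gives the homomorphism property via the Rota-Baxter relation in $R$, and the same decomposition $B_+^a(F)=\tdun{a}\diamond_\lambda B_+^{1_A}(F)$ together with the concatenation-morphism hypothesis forces uniqueness. The only differences are cosmetic (uniqueness by induction rather than by minimal counterexample, and your explicit remark on why non-connected forests escape the $\diamond_\lambda$-generated subalgebra), and like the paper you defer the degenerate empty-forest subcases to routine verification.
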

\begin{proof}
 Let us first prove that $(\calF^+_A,\diamond_\lambda, B_+^{1_A})$ is a Rota-Baxter algebra of weight $\lambda$. Let $f = T_1 \cdots T_k$ and $f' = t_1 \cdots t_n$ be two forests. Since the $T_i$ and $t_j$ are non empty we can write  $T_i=B_+^{a_i}(f_i)$ and $t_j=B_+^{a'_j}(f'_j)$. With a small abuse of notation we also write $\psi_i(f) = T_1 \cdots \hat{T_i} \cdots T_k$ and $\psi_j(f') = t_1 \cdots \hat{t_j} \cdots t_k$ (where the hat means that the tree is omitted). Then
 \begin{align*}
        B_+^{1_A}(f)\diamond_\lambda B_+^{1_A}(f') &= B_+^{1_A}(f\ast_\lambda f') \\
        &= B_+^{1_A}\left(\frac{1}{k}\sum_{i = 1}^{k}B_+^{a_i}(f_i\ast_\lambda f')\psi_i(f) + \frac{1}{n}\sum_{j = 1}^{n}B_+^{a'_j}(f\ast_\lambda f'_j)\psi_j(f') + \lambda f\diamond_\lambda f' \right)\\
        &= B_+^{1_A}\left(\frac{1}{k}\sum_{i = 1}^{k}\left(T_i\diamond_\lambda B_+^{1_A}(f')\right) \psi_i(f) + \frac{1}{n}\sum_{j = 1}^{n}\left(B_+^{1_A}(f)\diamond_\lambda t_j\right) \psi_j(f') + \lambda f\diamond_\lambda f' \right)\\
        &= B_+^{1_A}\left(f\diamond_\lambda B_+^{1_A}(f') + B_+^{1_A}(f)\diamond_\lambda f' + \lambda f\diamond_\lambda f'\right).
    \end{align*}
 Thus $(\calF^+_A,\diamond_\lambda, B_+^{1_A})$ is a Rota-Baxter algebra of weight $\lambda$ as claimed. \\
 
 To show the property \ref{diagramdiam}, let $(R,P)$ be a Rota-Baxter algebra of weight $\lambda$ and $\varphi : A \to R$ be an algebra homomorphism. Define $\bar{\varphi}: (F_A,B_+^{1_A}) \to (R,P)$ by induction on $|F|$.
    When $n = 1$ we have $F = \tdun{a}$ and
    $$\bar{\varphi}(\tdun{a}) = \bar{\varphi}(j_A(a)) := \varphi(a).$$
 Assume now that for $N\geq2$ fixed, $\bar{\varphi}(f)$ has been defined for all forest such that $|f| < N$ and let $|F| = N.$ We have two cases
    \begin{itemize}
        \item If $F = B_+^a(F')$ with $|F'| = N-1$ we set 
        \begin{equation*}
         \bar{\varphi}(F) = \bar{\varphi}\left(B_+^a(F')\right):=\varphi(a)P(\bar{\varphi}(F')).
        \end{equation*}
        \item If $F = T_1\dots T_n$ with $n\geq2$ we set 
        $$\bar{\varphi}(F) := \bar{\varphi}(T_1)\dots \bar{\varphi}(T_n).$$
    \end{itemize}
    Let us now show that $\bar{\varphi}$ is indeed a Rota-Baxter homomorphism. For any $F\in\calF_A^+$ we have
    \begin{equation*}
     \bar{\varphi}\left(B_+^{1_A}(F)\right)=\varphi(1_A)P\left(\bar{\varphi(F)}\right) = P\left(\bar{\varphi(F)}\right)
    \end{equation*}
    where we have used the definition of $\bar{\varphi}$ on trees and the fact that $\varphi$ is an algebra morphism and therefore $\varphi(1_A)=1_R$. Thus $\bar{\varphi}\circ B_+^{1_A} = P\circ \bar{\varphi}$ as expected.
    
    We should now show that $\bar\varphi$ is an algebra morphism. For any non-empty forests $F_1$ and $F_2$ we prove $\bar{\varphi}(F_1\diamond F_2) = \bar{\varphi}(F_1)\bar{\varphi}(F_2)$ by induction on $|F_1|+|F_2|$. When $|F_1|+|F_2| = 2$ then $F_1 = \tdun{a}$ and $F_2 = \tdun{b}$ then
    $$\bar{\varphi}(\tdun{a}\diamond_\lambda \tdun{b}) = \bar{\varphi}(\tdun{ab}) = \varphi(ab) = \varphi(a)\varphi(b) = \bar{\varphi}(\tdun{a})\bar{\varphi}(\tdun{b})$$
    where we have used the definition of $\bar\varphi$ and the fact that $\varphi$ is an algebra morphism.
    
    Now assume that for $N\geq3$ the result holds for $|F_1|+|F_2| < N$ and let $F_1,F_2$ be non-empty forests such that $|F_1|+|F_2| = N.$ We have two cases to consider:
    \begin{itemize}
     \item If $F_1 = T_1\dots T_k$ and $F_2 = t_1\cdots t_n$ we have
    \begin{align*}
        \bar{\varphi}(F_1\diamond_\lambda F_2) &= \bar{\varphi}\left(\frac{1}{kn} \sum_{i = 1}^{k} \sum_{j = 1}^{n} \big((T_i \diamond_\lambda t_j) T_1 \dots \widehat{T_i}\dots T_k t_1 \dots \widehat{t_j} \dots t_n\big)\right) \quad\text{by definition of }\diamond_\lambda\\
        &= \frac{1}{kn}\sum_{i = 1}^{k} \sum_{j = 1}^{n}\bar{\varphi}(T_i\diamond_\lambda t_j)\bar{\varphi}(T_1)\dots \widehat{\bar{\varphi}(T_i)}\dots \bar{\varphi}(T_k)\bar{\varphi}(t_1)\dots \widehat{\bar{\varphi}(t_j)}\dots \bar{\varphi}(t_n)\quad\text{by definition of }\bar\varphi\\
        &= \frac{1}{kn}\sum_{i = 1}^{k} \sum_{j = 1}^{n}\bar{\varphi}(T_1)\dots  \bar{\varphi}(T_k)\bar{\varphi}(t_1)\dots \bar{\varphi}(t_n)\quad\text{by the induction the hypothesis}\\
        &= \bar{\varphi}(F_1)\bar{\varphi}(F_2) \quad\text{by definition of }\bar\varphi.
    \end{align*}
     \item If however $F_1 = B_+^a(F'_1)$ and $F_2 = B_+^b(F'_2)$, using the definitions of $\diamond_\lambda$ and $\ast_\lambda$ we can write 
     \begin{equation*}
      \tdun{ab}\diamond_\lambda\left(B_+^{1_A}(F'_1)\diamond_\lambda B_+^{1_A}(F'_2)\right)=\tdun{ab}\diamond_\lambda\left(B_+^{1_A}(F_1'\ast_\lambda F_2')\right) = B_+^{ab}(F_1'\ast_\lambda F_2') = F_1\diamond_\lambda F_2.
     \end{equation*}
 We then have
\begin{align*}
		\bar{\varphi}(F_1\diamond_\lambda F_2) & = \bar{\varphi}\Big(\tdun{ab}\diamond_\lambda\left(B_+^{1_A}(F'_1)\diamond_\lambda B_+^{1_A}(F'_2)\right)\Big)\\
		& = \bar{\varphi}\Big(\tdun{ab}\diamond_\lambda B_+^{1_A}\left(F'_1\diamond_\lambda B_+^{1_A}(F'_2) + B_+^{1_A}(F'_1)\diamond_\lambda F'_2 + \lambda F'_1\diamond_\lambda F'_2\right)\Big) \\
        & \hspace{8cm}
        \llcorner\text{since }B^{1_A}_+\text{ is a RBO of weight }\lambda\\
		& = \bar{\varphi}\Big( B_+^{ab}\left(F'_1\diamond_\lambda B_+^{1_A}(F'_2) + B_+^{1_A}(F'_1)\diamond_\lambda F'_2 + \lambda F'_1\diamond_\lambda F'_2\right)\Big) \\
		&= \varphi(ab)\bar{\varphi}\Big(B_+^{1_A}\left(F'_1\diamond_\lambda B_+^{1_A}(F'_2) + B_+^{1_A}(F'_1)\diamond_\lambda F'_2 + \lambda F'_1\diamond_\lambda F'_2\right)\Big)\quad\text{by definition of }\bar\varphi\\
		&= \varphi(ab)P\Big(\bar{\varphi}\left(F'_1\diamond_\lambda B_+^{1_A}(F'_2) + B_+^{1_A}(F'_1)\diamond_\lambda F'_2 + \lambda F'_1\diamond_\lambda F'_2\right)\Big) \quad\text{since }\bar\varphi\circ B_+^{1_A}=P\circ \bar\varphi\\
		&= \varphi(ab)P\Big(\bar{\varphi}(F'_1)\bar{\varphi}\left(B_+^{1_A}(F'_2)\right) + \bar{\varphi}\left(B_+^{1_A}(F'_1)\right)\bar{\varphi}(F'_2) + \lambda\bar{\varphi}(F'_1)\bar{\varphi}(F'_2)\Big) \\
		&\hspace{8cm}\llcorner\text{by the induction hypothesis}\\
		&= \varphi(ab)P\Big(\bar{\varphi}(F'_1)P\left(\bar{\varphi}(F'_2)\right) + P\left(\bar{\varphi}(F'_1)\right)\bar{\varphi}(F'_2) + \lambda\bar{\varphi}(F'_1)\bar{\varphi}(F'_2)\Big) \quad\text{since }\bar{\varphi}\circ B_+^{1_A} = P\circ \bar{\varphi}\\
        & = \varphi(ab)P\Big(\bar{\varphi}(F'_1)\Big)P\Big(\bar{\varphi}(F'_2)\Big) \quad \text{since }P\text{ is a RBO of weight }\lambda\\
        & = \varphi(a)\varphi(b)P\Big(\bar{\varphi}(F'_1)\Big)P\Big(\bar{\varphi}(F'_2)\Big) \quad \text{since }\varphi\text{ is an algebra morphism}\\
		&= \bar{\varphi}(F_1)\bar{\varphi}(F_2)
	\end{align*}
	by definition of $\bar\varphi$ and commutativity of $R$.
    \end{itemize}
Therefore we $\bar\varphi$ is an algebra morphism. \\

Finally, notice that by construction, $\bar\varphi$ is an algebra morphism for the concatenation product of trees. Let us show by contradiction that it is the only morphism for this product that makes Diagram \ref{diagramdiam} commutes. Let $\psi :(\cal{F}^+_{A},B_+^{1_{A}}) \to (R,P)$ be a Rota-Baxter algebra morphism such that Diagram \ref{diagramdiam} and that is a homomorphism for the concatenation of trees and suppose $\psi \neq \bar{\varphi}$.

So it exists $f\in \calF^+_{A}$ such that $\psi(f)\neq \bar{\varphi}(f)$. Choose such an $f$ with the less possible vertices and set $N:=|f|$. For any $F$ with one vertex we have $F = \tdun{a}$ and
	$$\psi (F) = \psi(j_A(a)) = \varphi (a) = \bar\varphi(\tdun{a}).$$
    Thus we must have $N=|f|\geq2$. Then we have two cases:
	\begin{itemize}
		\item[$\bullet$] if $f = B_+^a(\tilde{f}),$ we write $\tilde{f} = t_1\dots t_n,$ then $\psi(t_i)=\bar\varphi(t_i)$ for $i=1,\cdots,n$ and
		\begin{multline*}
		    \psi(f) = \psi(\tdun{a}\diamond B_+^{1_{A}}(\tilde{f})) = \varphi(a)P(\psi(\tilde{f})) = \varphi(a)P(\psi(t_1\dots t_n)) \\
            =\varphi(a)P(\psi(t_1)\dots \psi(t_n)) = \varphi(a)P(\bar{\varphi}(t_1)\dots \bar{\varphi}(t_n)) =  \bar{\varphi}(f);
		\end{multline*}
        which is a contradiction.
		\item[$\bullet$] if $f = t_1\cdots t_n$ then again $\psi(t_i)=\bar\varphi(t_i)$ for $i=1,\cdots,n$ and since $\psi$ and $\bar\varphi$ are both algebra morphisms for the concatenation of rooted trees we find
		$$\psi(f) = \psi(t_1)\cdots \psi(t_n) = \bar{\varphi}(t_1)\cdots \bar{\varphi}(t_n) = \bar{\varphi}(f).$$
	\end{itemize}
	Again, this is a contradiction. Therefore $\psi = \bar{\varphi}$, which concludes the proof. 
\end{proof}
\begin{remark}
 Notice that the property \ref{diagramdiam} cannot be stated directly as a universal property since we need $R$ to ba associative (otherwise $\bar\varphi$ is ill-defined) but $\diamond_\lambda$ is not. Thus we cannot say that $(\calF_A,\diamond_\lambda, B_+^{1_A})$ is freely generated by $A$ as an associative RBA since it does not belong to this category.
\end{remark}

\bibliographystyle{unsrt}
 \addcontentsline{toc}{section}{References}
\bibliography{tree_cone_v5}

\end{document}